\def\today{${\scriptscriptstyle\number\day-\number\month-\number\year}$}
\newtheorem{theorem}{Theorem}[section]
\newtheorem{lemma}[theorem]{Lemma}
\newtheorem{remark}[theorem]{Remark}
\def\address#1{{\center{#1}}}
\date{}
\def\m@th{\mathsurround=0pt}
\def\eqal#1{\null\,\vcenter{\openup\jot\m@th
 \ialign{\strut\hfil$\displaystyle{##}$&&$\displaystyle{{}##}$\hfil
 \crcr#1\crcr}}\,}
\def\matrix#1{\null\,\vcenter{\normalbaselines\m@th
 \ialign{\hfil$##$\hfil&&\quad\hfil$##$\hfil\crcr
 \mathstrut\crcr\noalign{\kern-\baselineskip}
 #1\crcr\mathstrut\crcr\noalign{\kern-\baselineskip}}}\,}
\def\N{\mathbb{N}}
\def\R{\mathbb{R}}
\def\supp{{\rm supp}}
\def\divv{{\rm div\,}}
\title{Global regular motions for compressible barotropic viscous fluids.
Stability}
\author{H-O. Bae$^1$ and Wojciech M. Zaj\c aczkowski$^2$}
\begin{document}
\input amssym.def
\input amssym.tex
\maketitle
\thispagestyle{fancy}
\address{$^1$  Department of Mathematics, Ajou University, Suwon, South Korea\\
$^2$ Institute of Mathematics, Polish Academy of Sciences,\\
\'Sniadeckich 8, 00-656 Warsaw, Poland,\\
Institute of Mathematics and Cryptology, Cybernetics Faculty,\\
Military University of Technology,\\
Kaliskiego 2, 00-908 Warsaw, Poland\\
e-mail:wz@impan.gov.pl\\}

\begin{abstract}
We consider viscous compressible barotropic motions in a~boun\-ded domain $\Omega\subset\R^3$ with the Dirichlet boundary conditions for velocity. We assume the existence of some special sufficiently regular solutions $v_s$ (velocity), $\varrho_s$ (density) of the problem. By the special solutions we can choose spherically symmetric solutions. Let $v$, $\varrho$ be a~solution to our problem. Then we are looking for differences $u=v-v_s$,
$\eta=\varrho-\varrho_s$. We prove existence of $u$, $\eta$ such that $u,\eta\in L_\infty(kT,(k+1)T;H^2(\Omega))$, $u_t,\eta_t\in L_\infty(kT,(k+1)T;H^1(\Omega))$, $u\in L_2(kT,(k+1)T;H^3(\Omega))$,
$u_t\in L_2(kT,(k+1)T;H^2(\Omega))$, where $T>0$ is fixed and $k\in\N\cup\{0\}$. Moreover, $u$, $\eta$ are sufficiently small in the above norms. This also means that stability of the special solutions $v_s$, $\varrho_s$ is proved. Finally, we proved existence of solutions such that $v=v_s+u$, $\varrho=\varrho_s+\eta$.

\noindent
{\bf Mathematical Subject Classification (2010):} 35A01, 35Q30, 76N10

\noindent
{\bf Key words:} compressible viscous barotropic fluids, Dirichlet boundary conditions, global existence of regular solutions, stability of sphe\-rically symmetric solutions
\end{abstract}

\section{Introduction}\label{s1}
\setcounter{equation}{0}

We consider the motions of viscous compressible barotropic fluids in\break a~bounded
domain $\Omega\subset\R^3$ described by the following problem
\begin{equation}
\eqal{
&\varrho(v_t+v\cdot\nabla v)-\mu\Delta v-\nu\nabla\divv v+\nabla p=\varrho f
\quad &{\rm in}\ \ \Omega^T\equiv\Omega\times(0,T),\cr
&\varrho_t+\divv(\varrho v)=0\quad &{\rm in}\ \ \Omega^T,\cr
&\varrho|_{t=0}=\varrho_0,\ \ v|_{t=0}=v_0\quad &{\rm in}\ \ \Omega,\cr
&v=0\quad &{\rm on}\ \ S^T=S\times(0,T),\cr}
\label{1.1}
\end{equation}
where $S$ is the boundary of $\Omega$. By $x=(x_1,x_2,x_3)$ we denote the
Cartesian coordinates, \hskip-1pt $\varrho=\varrho(x,t)\in\R_+$ \hskip-1pt is \hskip-1pt the \hskip-1pt density \hskip-1pt of \hskip-1pt the \hskip-1pt fluid, \hskip-1pt $v=(v_1(x,t),v_2(x,t),\break v_3(x,t))\in\R^3$ is the velocity,
$p=p(\varrho)=A\varrho^\varkappa$, $\varkappa>1$, $A$ positive constant, the
pressure, $f=(f_1(x,t),f_2(x,t),f_3(x,t))\in\R^3$ the external force field. By
$\mu$, $\nu$ we denote positive viscosity coefficients satisfying the
following thermodynamic restrictions
\begin{equation}
\mu>0,\quad \nu>0.
\label{1.2}
\end{equation}
Finally, by the dot we denote the scalar product in $\R^3$.

\noindent
Since we are interested to prove global regular solutions to (\ref{1.1}) and
since such result can not be expected in the nearest future (if at all) we
are looking for such solutions to (\ref{1.1}) which are close to some known
special regular global solutions to (\ref{1.1}) (for example: spherically
symmetric, two-dimensional).

\noindent
Global existence of regular spherically symmetric solutions to problem
(\ref{1.1}) in a~domain between two spheres is proved in \cite{CK}. Global
existence of regular two-dimensional solutions is proved in \cite{KW} for
equations $(\ref{1.1})_{1,2}$, where $\nu=\nu(\varrho)=\varrho^\gamma$,
$\gamma>3$, and for the slip boundary conditions. Therefore, we restrict our
considerations to show stability of spherically symmetric solutions.
Moreover, stability of two-dimensional solutions must be performed in
a different way.

\noindent
Hence, we assume that $\varrho_s,v_s,p_s=A\varrho_s^\varkappa$ are the special
regular solutions satisfying the problem
\begin{equation}
\eqal{
&\varrho_s(v_{st}+v_s\cdot\nabla v_s)-\mu\Delta v_s-\nu\nabla\divv v_s+
\nabla p_s=\varrho_sf_s\quad &{\rm in}\ \ \Omega^T,\cr
&\varrho_{st}+\divv(\varrho_sv_s)=0\quad &{\rm in}\ \ \Omega^T,\cr
&\varrho_s|_{t=0}=\varrho_{s0},\ \ v_s|_{t=0}=v_{s0}\quad &{\rm in}\ \ \Omega,\cr
&v_s|_S=0\quad &{\rm on}\ \ S^T.\cr}
\label{1.3}
\end{equation}
Then we are looking for solutions to problem (\ref{1.1}) in the form
\begin{equation}
v=v_s+u,\quad \varrho=\varrho_s+\eta,\quad p=p_s+q,\quad f=f_s+g,
\label{1.4}
\end{equation}
where $u$, $\eta$, $q$ are solutions to the problem
\begin{equation}
\eqal{
&\varrho_s(u_t+v_s\cdot\nabla_u)-\mu\Delta u-\nu\nabla\divv u+\nabla q\cr
&=\eta f_s+\varrho_sg-[\eta(v_{st}+(v_s+u)\cdot\nabla v_s)+\varrho_su\cdot
\nabla v_s]\quad {\rm in}\ \ \Omega^T,\cr
&\quad-\eta u_t-[\eta(u+v_s)+\varrho_su]\nabla u\equiv\bar f\cr}
\label{1.5}
\end{equation}
\begin{equation}
\eta_t+v_s\cdot\nabla\eta+\varrho_s\divv u=-\eta\divv v_s-u\cdot\nabla\varrho_s-
u\cdot\nabla\varrho_s-\divv(\eta u)\equiv\bar h\quad {\rm in}\ \ \Omega^T,
\label{1.6}
\end{equation}
\begin{equation}
\eqal{
&u|_{t=0}=u_0\equiv v_0-v_{s0},\ \ \eta|_{t=0}=\eta_0\equiv
\varrho_0-\varrho_{s0}\quad &{\rm in}\ \ \Omega,\cr
&u|_S=0\quad &{\rm on}\ \ S^T.\cr}
\label{1.7}
\end{equation}
Equation (\ref{1.6}) is not compatible with the main operator in (\ref{1.5}),
where $\nabla q$ appears. Therefore, we replace (\ref{1.6}) by an equation
for $q$.

\noindent
Since $p=A\varrho^\varkappa$, equation $(\ref{1.1})_2$ yields
\begin{equation}
p_t+v\cdot\nabla p+\varkappa p\divv v=0.
\label{1.8}
\end{equation}
Similarly, for $p_s=A\varrho_s^\varkappa$ equation $(\ref{1.3})_2$ implies
\begin{equation}
p_{st}+v_s\cdot\nabla p_s+\varkappa p_s\divv v_s=0.
\label{1.9}
\end{equation}
Using definition of $q$ from (\ref{1.4}), equations (\ref{1.8}) and
(\ref{1.9}) imply
\begin{equation}
{1\over\varkappa p_s}(q_t+v_s\cdot\nabla q)+\divv u=-{u\over\varkappa p_s}
\cdot\nabla(p_s+q)-{q\over p_s}\divv(v_s+u)\equiv\bar h.
\label{1.10}
\end{equation}
For equations (\ref{1.5}) and (\ref{1.10}) we have the following initial and
boundary conditions
\begin{equation}
u|_{t=0}=u_0,\ \ q|_{t=0}=A(\varrho_0^\varkappa-\varrho_{s0}^\varkappa)\equiv
q_0,
\label{1.11}
\end{equation}
\begin{equation}
u|_S=0,\ \ v_s|_S=0.
\label{1.12}
\end{equation}
Our aim is to prove global existence of regular solutions to problem
(\ref{1.5}), (\ref{1.10})--(\ref{1.12}) under assumption that $u_0$, $q_0$ are
sufficiently small in appropriate norms and $(\varrho_s,v_s)$ is
sufficiently regular solution to problem (\ref{1.3}). Since in equations
(\ref{1.5}) and (\ref{1.10}) $\eta$ and $q$ appear together, we have to find
relations between them. In view of formulas $q=p(\varrho)-p(\varrho_s)=
A(\varrho^\varkappa-\varrho_s^\varkappa)$ and $\eta=\varrho-\varrho_s$
we have the relations
\begin{equation}
q=\varkappa A\tilde\varrho^{\varkappa-1}\eta\quad {\rm and}\quad
\eta={1\over\varkappa A}\tilde q^{1/\varkappa}q,
\label{1.13}
\end{equation}
where $\tilde\varrho\in(\varrho,\varrho_s)$ and $\tilde q\in(p,p_s)$.
To justify these formulas we need existence of positive constants $\varrho_*$
and $\varrho^*$ such that
\begin{equation}
\varrho_*\le\varrho_s\le\varrho^*\quad {\rm and}\quad
{1\over2}\varrho_*\le\varrho\le2\varrho ^*.
\label{1.14}
\end{equation}
\noindent
The aim of this paper is to prove existence of global regular solutions to
problem (\ref{1.5}), (\ref{1.10})--(\ref{1.12}) under assumptions that
$u_0$, $\eta_0$, $g$ are sufficiently small in corresponding norms and
$(\varrho_s,v_s)$ is a~sufficiently regular solution to (\ref{1.2}). Hence, we
also prove existence of global regular solutions to (\ref{1.1}) which
remain sufficiently close to special solutions to (\ref{1.2}) for all time.
In Section \ref{s3} we prove existence of local regular solutions to
(\ref{1.5}), (\ref{1.10})--(\ref{1.12}) by the method of successive
approximations. Moreover, the existence is proved in such form that the
existence time is inversely proportional to the corresponding norms of the
initial data and the external force $g$. We derived in Section \ref{s4}
differential inequality (\ref{4.82}) which makes possible an extension of the
local solution step by step in time. Notation is introduced in Section \ref{s2}.

\noindent
Now we formulate the main results of this paper.

\noindent
Let $\Gamma_{k-1}^k(\Omega)=\{u:\ u\in H^k(\Omega),u_t\in H^{k-1}(\Omega)\}$,
$k\in\N$.

\begin{remark}\label{r1.1} (see \cite{CK})
Assume that there exists a~global special solution $(\varrho_s,v_s)$ to problem
(\ref{1.3}) belonging to space $\bar X(\Omega\times(kT,(k+1)T))$, $k\in\N_0$
(see definition below). Moreover, the solution is such that there exists
positive constants $\varrho_*<\varrho^*$ and
$\varrho_*\le\varrho_s(x,t)\le\varrho^*$.
\end{remark}

$$\eqal{
&\bar X(\Omega\times(kT,(k+1)T))=\{(\varrho,v):\ \cr
&\quad\|\varrho,v\|_{\bar X(\Omega\times(kT,(k+1)T))}=
\|\varrho,v\|_{L_\infty(kT,(k+1)T;\Gamma_1^2(\Omega))}\cr
&+\quad\|v_{,tt}\|_{L_\infty(kT,(k+1)T;L_2(\Omega))}<\infty\},\cr}
$$
where $\|\varrho,v\|_H=\|\varrho\|_H+\|v\|_H$ and $H$ can be any space used in this paper.

\noindent
Let
$$\eqal{
&\varphi_1(t)=\|u(t),q(t)\|_{\Gamma_1^2(\Omega)}^2,\cr
&\Phi_1=\|u(t)\|_{\Gamma_2^3(\Omega)}^2+\|q(t)\|_{\Gamma_1^2(\Omega)}^2,\cr
&(u,q)\in{\frak N}(\Omega\times(kT,(k+1)T))\quad {\rm if}\quad \sup_{kT\le t\le(k+1)T}
\varphi_1(t)<\infty,\cr
&(u,q)\in{\frak M}(\Omega\times(kT,(k+1)T))\quad {\rm if}\quad \sup_{kT\le t\le(k+1)T}
\varphi_1(t)\,+\!\!\!\!\intop_{kT}^{(k+1)T}\!\Phi_1(t)dt<\infty.\cr}
$$

\begin{theorem}\label{t1.2} [Global existence of solutions to (\ref{1.5}),
(\ref{1.10})--(\ref{1.12})]
Let $k\in\N_0$ and $T>0$ be given. Assume that $\varrho_s,v_s\in\bar X(\Omega\times(kT,(k+1)T))$,
$f_s,g\in L_\infty(kT,(k+1)T;\Gamma_0^1(\Omega))$. Assume also that there exists a~constant $\gamma$ sufficiently  small and
$\varphi_1(0)\le\gamma,\sup_{k\in\N_0}$, $\|g\|_{L_\infty(kT,(k+1)T;\Gamma_0^1(\Omega))}\le c\gamma$.\\
Then there exists a~global solution to problem (\ref{1.5}),
(\ref{1.10})--(\ref{1.2}) such that $(u,q)\in{\frak M}(kT,(k+1)T;\Omega)$ for any $k\in\N_0$ and $\varphi_1(t)\le\gamma$, for any $t\in\R_+$.
\end{theorem}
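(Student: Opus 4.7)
\medskip
\noindent
\textbf{Proof plan.} The plan is to combine three ingredients: the local existence theorem proved in Section~\ref{s3} by successive approximations, the differential inequality~(4.82) from Section~\ref{s4}, and a bootstrap/continuation argument over the intervals $I_k=(kT,(k+1)T)$. First I would invoke the local existence result to obtain, starting from $t=0$, a solution $(u,q)\in{\frak M}(\Omega\times(0,T_*))$ for some $T_*>0$ whose size is controlled from below by the inverse of $\varphi_1(0)+\|g\|_{L_\infty(0,T;\Gamma_0^1)}^2$. Since both quantities are bounded by $C\gamma$, choosing $\gamma$ small enough guarantees $T_*\ge T$, so the local solution exists on the whole first interval $I_0$.

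Next, on $I_0$ I would apply the a priori differential inequality~(\ref{4.82}). Schematically it has the form
\begin{equation*}
\frac{d}{dt}\varphi_1(t)+c_1\Phi_1(t)\le P\bigl(\varphi_1(t)\bigr)\varphi_1(t)+c_2\|g\|_{\Gamma_0^1}^2,
\end{equation*}
where $P$ is an increasing polynomial vanishing at $0$, and where the coefficients depend only on the special solution norm $\|\varrho_s,v_s\|_{\bar X}$ and on $\varrho_*,\varrho^*$ through the relations~(\ref{1.13})--(\ref{1.14}). So long as $\varphi_1(t)\le\gamma$ with $\gamma$ small enough that $P(\gamma)\le c_1/(2C_\Omega)$ (Poincaré-type constant absorbing $\Phi_1\ge C_\Omega^{-1}\varphi_1$), this inequality reduces to a linear dissipative one $\dot\varphi_1+c_3\varphi_1\le c_2\|g\|^2$, whose Gronwall solution yields
\begin{equation*}
\varphi_1(t)\le e^{-c_3(t-kT)}\varphi_1(kT)+\frac{c_2}{c_3}\sup_k\|g\|_{L_\infty(I_k;\Gamma_0^1)}^2,
\end{equation*}
together with $\int_{kT}^{(k+1)T}\Phi_1\,dt<\infty$, i.e.\ membership in ${\frak M}(\Omega\times I_k)$.

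The heart of the argument is the induction step: assuming $\varphi_1(kT)\le\gamma$ and $\|g\|_{L_\infty(I_k;\Gamma_0^1)}\le c\gamma$, I want to conclude $\varphi_1(t)\le\gamma$ for all $t\in I_k$, and in particular $\varphi_1((k+1)T)\le\gamma$. Pick $\gamma$ so small that $e^{-c_3 T}\gamma+(c_2/c_3)c^2\gamma^2\le\gamma$, which holds for all sufficiently small $\gamma>0$ because the right-hand side is $\gamma(e^{-c_3T}+O(\gamma))$. A standard continuation argument then works: let $t^*$ be the supremum of times on $I_k$ where $\varphi_1\le\gamma$; on $[kT,t^*]$ the linearized Gronwall bound above applies, hence $\varphi_1(t^*)<\gamma$, and by continuity (guaranteed by $(u,q)\in{\frak M}$) the set extends past $t^*$, so $t^*=(k+1)T$. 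Local existence starting from $t=(k+1)T$ with data $\varphi_1((k+1)T)\le\gamma$ then produces a solution on $I_{k+1}$, and the induction closes.

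The main obstacle is the derivation and correct handling of the nonlinear inequality for $\varphi_1$: the right-hand sides $\bar f$ in~(\ref{1.5}) and $\bar h$ in~(\ref{1.10}) contain products like $\eta u_t$, $\eta\nabla u$, $u\cdot\nabla q$, which at the level of $\Gamma_2^3$ estimates for $u$ and $\Gamma_1^2$ for $q$ must be absorbed into the dissipative terms $\mu\|\nabla u\|_{H^2}^2$ and $\nu\|\divv u\|_{H^2}^2$ via interpolation, the relations~(\ref{1.13}), and the uniform bounds~(\ref{1.14}). Provided this absorption is achievable after choosing $\gamma$ small (which is the content of Section~\ref{s4} and of inequality~(\ref{4.82})), the induction closes and yields a global solution with $\varphi_1(t)\le\gamma$ for all $t\in\R_+$ and $(u,q)\in{\frak M}(\Omega\times I_k)$ for every $k\in\N_0$, as claimed.
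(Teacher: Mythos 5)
Your overall architecture — local existence from Section~\ref{s3}, an a priori differential inequality from Section~\ref{s4}, and a bootstrap over the intervals $I_k=(kT,(k+1)T)$ — is the same as the paper's; Theorem~\ref{t1.2} is indeed obtained by combining Theorem~\ref{t3.3} with Lemmas~\ref{l4.7} and~\ref{l4.8}. But the key analytical step is misrepresented in a way that would make the argument collapse.

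You write the differential inequality schematically as
$\frac{d}{dt}\varphi_1+c_1\Phi_1\le P(\varphi_1)\varphi_1+c_2\|g\|_{\Gamma_0^1}^2$
with $P$ an \emph{increasing polynomial vanishing at $0$}, so that after restricting to $\varphi_1\le\gamma$ small the right-hand side becomes negligible and you land on a linear dissipative inequality $\dot\varphi_1+c_3\varphi_1\le c_2\|g\|^2$. The paper's actual inequality (\ref{4.83}) is
$\frac{d}{dt}\varphi_0+\Phi_1\le c_0B_1\varphi_0+c_0B_2(\varphi_0+\varphi_0^2+\varphi_0^4)\varphi_0+c_0B_3\varphi_1\Phi_1+c_0G$,
where $B_1=|f_s|_{1,0}^2+A_4(1+A_1)+A_1+\cdots+A_1^4$ is a coefficient built from the norms of the \emph{special} solution $(\varrho_s,v_s,f_s)$. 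This is a genuinely \emph{linear} term with a coefficient that does not go to zero as $\varphi_1\to0$ — so a polynomial with $P(0)=0$ cannot represent it, and choosing $\gamma$ small does nothing to make $c_0B_1\varphi_0$ small relative to $\Phi_1$. Once you run the absorption honestly you get $\dot\varphi_0+\tfrac12\Phi_1\le c_0B_1\varphi_0+c_0G$, and since $c_0B_1$ can dominate the dissipation constant, the naive Gronwall bound $\varphi_1(t)\le e^{-c_3(t-kT)}\varphi_1(kT)+\dots$ with $c_3>0$ is simply not available.

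This is precisely what Lemma~\ref{l4.8} is built to fix. There the authors introduce the exponentially weighted quantity $\eta(t)=\exp(-c_0\int_{kT}^tB_1)\,\varphi_0(t)$ and the weighted forcing $G_0$ (see (\ref{4.88})--(\ref{4.91})), absorb the $B_3\varphi_1\Phi_1$ term via the smallness assumption on $\varphi_1$ (Assumption~2), reduce the $B_2$ nonlinearity via Assumption~3, and then run a first-exit-time/contradiction argument at $t_*$ using Assumption~4 to bound $c_0G$ by $\tfrac{c_*}{4}\gamma$. The integrability hypothesis $\sup_k\int_{kT}^{(k+1)T}B_1\,dt<\infty$ (Assumption~1) is what keeps the exponential factor bounded on each interval. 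None of this machinery appears in your sketch, yet it is the whole content of the step from "a differential inequality holds" to "$\varphi_1(t)\le\gamma$ for all $t$." You also need the separate interval-restart via the local existence theorem (your final paragraph), which the paper does carry out, so that part of your plan is sound. The fix is to replace your schematic $P(\varphi_1)\varphi_1$ by the correct three-tier right-hand side $B_1\varphi_0+B_2(\cdots)\varphi_0^2+B_3\varphi_1\Phi_1+G$, and to replace the direct Gronwall estimate by the integrating-factor barrier argument of Lemma~\ref{l4.8}.
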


\begin{theorem}\label{t1.3}[Global existence of solutions to problem
(\ref{1.1})].
Let there exists a~special solution to (\ref{1.3}) described by
Remark \ref{r1.1}.
Let the assumptions of Theorem \ref{t1.2} hold. Then there exists a~global
regular solution to problem (\ref{1.1}) in the form
$$
v=v_s+u,\quad \varrho=\varrho_s+\eta
$$
such that $(u,\eta)\in{\frak M}(\Omega\times(kT,(k+1)T))$, $k\in\N_0$ and
$(v_s,\varrho_s)\in\bar X(\Omega\times(kT,(k+1)T))$, $k\in\N_0$.\\
Moreover the following estimate holds
$$
\sup_k\|v,\varrho\|_{{\frak M}(\Omega\times(kT,(k+1)T))}\le c D\gamma+\sup_k
\|v_s,\varrho_s\|_{\bar X(\Omega\times(kT,(k+1)T))}
$$
where $D$ is a~function of $\sup_k\|\varrho_s,v_s\|_{\bar X(\Omega\times[kT,(k+1)T])}$\\
and $\sup_k\|f_s\|_{L_\infty(kT,(k+1);\Gamma_0^1(\Omega))}$ (see Remark \ref{r4.8}).
\end{theorem}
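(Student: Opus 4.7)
The strategy is to invoke Theorem \ref{t1.2} as the sole analytic input and then algebraically reassemble the solution to (\ref{1.1}) from the triple $(v_s,\varrho_s)$ and the perturbation $(u,q)$. First I would apply Theorem \ref{t1.2} to produce a global $(u,q)\in{\frak M}(\Omega\times(kT,(k+1)T))$ with $\varphi_1(t)\le\gamma$ uniformly in $t\in\R_+$ and in $k\in\N_0$. Then I would define $v:=v_s+u$, $p:=p_s+q$, $\varrho:=(p/A)^{1/\varkappa}$ and $\eta:=\varrho-\varrho_s$, so that by construction $p=A\varrho^\varkappa$.

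Before continuing, one must check that $\varrho$ is well defined and that (\ref{1.13})--(\ref{1.14}) hold. Since $\varphi_1(t)\le\gamma$ and $H^2(\Omega)\hookrightarrow L_\infty(\Omega)$ in three dimensions, $\|q(t)\|_{L_\infty}\le c\sqrt\gamma$; choosing $\gamma$ small depending on $\varrho_*,\varrho^*$ gives $p\ge\tfrac12 A\varrho_*^\varkappa>0$ and hence $\tfrac12\varrho_*\le\varrho\le 2\varrho^*$, which is exactly (\ref{1.14}). With these bounds available, the regularity of $\eta$ is inherited from that of $q$: the map $F(z)=(z/A)^{1/\varkappa}$ is smooth on $\{z\ge\tfrac12 A\varrho_*^\varkappa\}$, and $H^2(\Omega)$ is a multiplicative algebra in $\R^3$, so the composition $F(p_s+q)$ together with the bound $\varrho_s\in L_\infty(kT,(k+1)T;H^2)$ from $\bar X$ gives $\eta\in L_\infty(kT,(k+1)T;H^2)$ and $\eta_t\in L_\infty(kT,(k+1)T;H^1)$, with norms controlled by $\|q\|_{\Gamma_1^2}$ times a constant depending on $\varrho_*,\varrho^*,\|\varrho_s\|_{\bar X}$. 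This places $(u,\eta)\in{\frak M}(\Omega\times(kT,(k+1)T))$.

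The last step is to verify that $(v,\varrho)$ satisfies (\ref{1.1}). The momentum equation is built into the derivation of (\ref{1.5}): adding (\ref{1.3})$_1$ to (\ref{1.5}) and substituting $v=v_s+u$, $\varrho=\varrho_s+\eta$, $p=p_s+q$, $f=f_s+g$ reproduces (\ref{1.1})$_1$ term by term. For the continuity equation, multiplying (\ref{1.10}) by $\varkappa p_s$ and rearranging gives
$$q_t+v\cdot\nabla q+u\cdot\nabla p_s+\varkappa p\divv u+\varkappa q\divv v_s=0,$$
and adding (\ref{1.9}) collapses this to $p_t+v\cdot\nabla p+\varkappa p\divv v=0$, which by $p=A\varrho^\varkappa$ and the chain rule is exactly (\ref{1.1})$_2$. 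Initial and boundary data transfer directly from (\ref{1.4}) and (\ref{1.11})--(\ref{1.12}). The closing estimate then follows from the triangle inequality
$$\|v,\varrho\|_{{\frak M}(\Omega\times(kT,(k+1)T))}\le\|u,\eta\|_{{\frak M}(\Omega\times(kT,(k+1)T))}+\|v_s,\varrho_s\|_{\bar X(\Omega\times(kT,(k+1)T))},$$
with the first term bounded by $cD\gamma$ via Theorem \ref{t1.2} combined with the reconstruction of $\eta$ from $q$.

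I expect the only non-bookkeeping point to be the transfer of $\Gamma_1^2$ regularity from $q$ to $\eta$, specifically the bound on $\eta_t\in L_\infty(kT,(k+1)T;H^1)$: differentiating $\eta=F(p_s+q)-\varrho_s$ in time produces $\eta_t=F'(p_s+q)(p_{st}+q_t)$, and estimating this product in $H^1(\Omega)$ is a Moser-type estimate resting on the algebra property of $H^2$ and on the smoothness of $F'$ on the range of $p_s+q$ secured in the previous step. No loss of regularity occurs; everything else is a direct algebraic consequence of the way (\ref{1.5})--(\ref{1.10}) were set up.
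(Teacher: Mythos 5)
Your proposal is correct and follows the route the paper intends but leaves implicit (the paper states Theorem~\ref{t1.3} without a written proof, deferring only the estimate to Remark~\ref{r4.8}): invoke Theorem~\ref{t1.2} for $(u,q)$, reconstruct $\varrho$ from $p=p_s+q$ so that the relations (\ref{1.13})--(\ref{1.14}) hold, transfer $\Gamma_1^2$ regularity from $q$ to $\eta$ by a Moser/Sobolev-algebra composition argument, verify (\ref{1.1}) by reversing the derivation of (\ref{1.5}) and (\ref{1.10}), and close with the triangle inequality and Remark~\ref{r4.8}. The one place where you are more careful than the paper is the explicit justification that $\eta$ inherits $H^2$ (and $\eta_t$ inherits $H^1$) regularity from $q$ — the paper records only the $L_p$ comparisons (\ref{2.4}), but your Moser-type estimate is exactly what is needed to make the step rigorous.
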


\section{Notation and auxiliary results}\label{s2}
\setcounter{equation}{0}

By $\|\cdot\|_{l,\Omega}$, $l\ge0$ and $|\cdot|_{p,\Omega}$,
$1\le p\le\infty$, we denote the norms of the usual Sobolev spaces
$W_2^l(\Omega)=H^l(\Omega)$ and $L_p(\Omega)$ spaces, respectively.

\noindent
Next, we introduce the space $\Gamma_k^l(\Omega)$ of functions $u$ with the
finite norm
$$
\|u\|_{\Gamma_k^l(\Omega)}=\sum_{i\le l-k}\|\partial_t^iu\|_{l-i,\Omega}
\equiv|u|_{l,k,\Omega},
$$
where $l>0$ and $k\ge0$, $k\le l$. In this paper we use that
$l,k\in\N_0=\N\cup\{0\}$.

\noindent
Let $u_1,u_2,\dots,u_n$ be given functions. Then
$$
\|u_1,u_2,\dots,u_n\|_X^2=\sum_{i=1}^n\|u_i\|_X^2,
$$
where $X$ is any used in this paper space.

\noindent
For distinguish time dependence of functions we introduce the notation
$$\eqal{
&\|u\|_{L_p(0,T;H^k(\Omega))}=\|u\|_{k,p,\Omega^T},\cr
&\|u\|_{L_q(0,T;L_p(\Omega))}=\langle u\rangle_{p,q,\Omega^T},\cr}
$$
where $p,q\in[1,\infty]$. Moreover,
$$
\|u\|_{L_p(0,T;\Gamma_k^l(\Omega))}=|u|_{l,k,p,\Omega^T}.
$$
The index $\Omega$ is always omitted. If $\Omega$ is replaced by a~subdomain
of $\Omega$, we left it in the above notation. Finally, we present the compatibility conditions
$$
\|u\|_{0,\Omega}=|u|_{2,\Omega},\quad |u|_{l,k,\Omega}=\|u\|_{l,\Omega},\quad
|u|_{l,l,p,\Omega^T}=\|u\|_{l,p,\Omega^T}.
$$
By $c$ we denote the generic constant which changes its value from formula to
formula. By $\varphi$ we denote the generic function which is always positive
increasing function of its arguments and it changes its form from formula
to formula.
Let $w$ be any function in this paper. Then we denote $\partial_tw=w_{,t}$.
For functions $w_n$, $w_s$ we have $\partial_tw_n=w_{n,t}$,
$\partial_tw_s=w_{s,t}$ and so on. Let $\alpha(u)=t^{1/2}\|u\|_{3,2,\Omega^t}$.

\noindent
It is convenient to introduce the elliptic operator
\begin{equation}
Au=\mu\Delta u+\nu\nabla\divv u.
\label{2.1}
\end{equation}
From (\ref{1.9}) we obtain the equation of continuity
\begin{equation}
\bigg({1\over p_s}\bigg)_{,t}+\divv\bigg({v_s\over p_s}\bigg)=
{\varkappa+1\over p_s}\divv v_s.
\label{2.2}
\end{equation}
From continuity equations $(\ref{1.1})_2$ and $(\ref{1.3})_2$ we have
${d\over dt}\intop_\Omega\varrho dx=0$, \break
${d\over dt}\intop_\Omega\varrho_sdx=0$ so
\begin{equation}
\intop_\Omega\eta dx=\intop_\Omega(\varrho_0-\varrho_{s0})dx\quad
{\rm for\ any}\ \ t\in\R_+.
\label{2.3}
\end{equation}
Inequalities (\ref{1.13}) and (\ref{1.14}) imply
\begin{equation}
|\eta|_{p,\Omega}\le c_1(\varrho_*,\varrho^*)|q|_{p,\Omega},\quad
|q|_{p,\Omega}\le c_2(\varrho_*,\varrho^*)|\eta|_{p,\Omega}.
\label{2.4}
\end{equation}
To obtain an estimate for spatial derivatives higher than the first we need
local considerations. For this we introduce a~partition of unity
$(\{\tilde\Omega_i\},\{\zeta_i\})$, where $\Omega=\bigcup_i\tilde\Omega_i$
and $\tilde\Omega_i=\supp\zeta_i$. Let $\tilde\Omega$ be one of the
$\tilde\Omega'_is$ and $\zeta(x)=\zeta_i(x)$ the corresponding function.
If $\tilde\Omega$ is an interior subdomain, then let $\tilde\omega$ be such
that $\bar{\tilde\omega}\subset\tilde\Omega$ and $\zeta(x)=1$ for $x\in\tilde\omega$. Otherwise we assume that
$\bar{\tilde\Omega}\cap S\not=\phi$, $\bar{\tilde\omega}\cap S\not=\phi$,
$\bar{\tilde\omega}\subset\bar{\tilde\Omega}$. Let $\xi\in\bar{\tilde\omega}\cap S\subset\bar{\tilde\Omega}\cap S\equiv\tilde S$.
Then we introduce new coordinates $y=Y(x)$ with origin at $\xi$. The mapping
$Y$ is a~composition of translation and rotation. Assume that $\tilde S$ in
the coordinates $y$ is described by $y_3=F(y_1,y_2)$, where $F$ is sufficiently
regular. Then
$$\eqal{
&\tilde\Omega=\{y:\ |y_i|<2\lambda,i=1,2,F(y')<y_3<F(y')+2\lambda,
y'=(y_1,y_2)\},\cr
&\tilde\omega=\{y:\ |y_i|<\lambda,i=1,2,F(y')<y_3<F(y')+\lambda,
y'=(y_1,y_2)\}.\cr}
$$
Further, we introduce new variables by
$$
z_i=y_i,\quad i=1,2,\quad z_3=y_3-\tilde F(y'),\quad y\in\tilde\Omega,
$$
which will be denoted by $z=\Phi(y)$, where $\tilde F$ is an extension of $F$
to $\tilde\Omega$. Let
$$
\hat\Omega=\Phi(\tilde\Omega)=\{z:\ |z_i|<2\lambda,i=1,2,0<z_3<2\lambda\}\quad
{\rm and}\quad \hat S=\Phi(\tilde S).
$$
Hence $\hat S=\{z:\ z_3=0,|z_i|<2\lambda,i=1,2\}$.

\noindent
Let $\Psi=\Phi\circ Y$. Then we introduce the notation
$\hat\nabla_k={\partial z_i\over\partial x_k}\nabla_{z_i}|_{x=\Psi^{-1}(z)}$.
Moreover, for interior subdomains we denote $\tilde u(x)=u(x)\zeta(x)$,
$x\in\tilde\Omega$, $\tilde\Omega\cap S=\phi$, and for boundary subdomains we
have $\tilde u(z)=\hat u(z)\hat\zeta(z)$, $z\in\hat\Omega=\Psi(\tilde\Omega)$,
$\tilde\Omega\cap S\not=\phi$, where $\hat u(z)=u(\Psi^{-1}(z))$.

\noindent
We use such notation that $\tau$ replaces $z'=(z_1,z_2)$ and $n=z_3$. Now we
transform equations (\ref{1.5}) and (\ref{1.10}) to the local coordinates
(we restrict the considerations to neighborhoods near the boundary only)
\begin{equation}
\hat\varrho_s\tilde u_t+\hat\varrho_s\hat v_s\cdot\hat\nabla\tilde u-
\mu\Delta_z\tilde u-\nu\nabla_z\divv_z\tilde u+\nabla_z\tilde q=
\tilde{\bar f}+k_1,
\label{2.5}
\end{equation}
\begin{equation}
{1\over\varkappa\hat p_s}(\tilde q_t+\hat v_s\cdot\hat\nabla\tilde q)+
\divv_z\tilde u=\tilde{\bar h}+k_2,
\label{2.6}
\end{equation}
where $\nabla_z$, $\Delta_z$, $\divv_z$ means the usual operators with respect to variables $z$, $k_i=k_{i1}+k_{i2}$, $i=1,2$, and
\begin{equation}
k_{11}=(\nabla_z-\hat\nabla)\tilde q-\mu(\Delta_z-\hat\Delta)\tilde u-\nu
(\nabla_z\divv_z-\hat\nabla\hat\divv)\tilde u
\label{2.7}
\end{equation}
\begin{equation}
\eqal{
k_{12}&=\hat q\hat\nabla\hat\zeta-\mu(2\hat\nabla_k\hat u\hat\nabla_k\hat\zeta+
\hat u\hat\Delta\hat\zeta)-\nu(\hat\divv\hat u\hat\nabla\hat\zeta\cr
&\quad+\hat\nabla\hat u_k\hat\nabla_k\hat\zeta+\hat u_k\hat\nabla\hat\nabla_k
\hat\zeta)\cr}
\label{2.8}
\end{equation}
\begin{equation}
k_{21}=(\divv_z-\hat\divv)\tilde u,
\label{2.9}
\end{equation}
\begin{equation}
k_{22}=(\hat q/\varkappa\hat p_s)\hat v_s\cdot\hat\nabla\hat\zeta+
\hat u\cdot\hat\nabla\hat\zeta,
\label{2.10}
\end{equation}
where the summation convention over repeated indices is assumed. Finally
\begin{equation}
\eqal{\tilde{\bar f}&=\hat\eta\tilde f_s+\hat\varrho_s\tilde g-
[\hat\eta(\hat v_{st}+(\hat v_s+\hat u)\cdot\hat\nabla\hat v_s)+\hat\varrho_s\hat u
\cdot\hat\nabla\hat v_s]\hat\zeta\cr
&\quad-\hat\eta\tilde u_t-[\hat\eta(\hat u+\hat v_s)+\hat\varrho_s\hat u]\cdot
\hat\nabla\hat u\hat\zeta\cr}
\label{2.11}
\end{equation}
and
\begin{equation}
\tilde{\bar h}=-{\hat u\over\varkappa\hat p_s}\cdot\hat\nabla(\hat p_s+\hat q)
\hat\zeta-{\tilde q\over\hat p_s}\hat\divv(\hat v_s+\hat u).
\label{2.12}
\end{equation}
Applying operator $(\mu+\nu)\nabla_z$ to (\ref{2.6}) and adding the result to
(\ref{2.5}) one has
\begin{equation}
\eqal{
&{(\mu+\nu)\over\varkappa}\nabla_z{1\over\hat p_s}\tilde q_t+\nabla_z\tilde q=
\mu(\Delta_z\tilde u-\nabla_z\divv_z\tilde u)-\hat\varrho_s\tilde u_t-
\hat\varrho_s\hat v_s\cdot\hat\nabla\tilde u\cr
&\quad+\tilde{\bar f}+k_1-{\mu+\nu\over\varkappa}\nabla_z\bigg({1\over\hat p_s}
\hat v_s\cdot\hat\nabla\tilde q\bigg)+(\mu+\nu)(\nabla_z\tilde{\bar h}+
\nabla_zk_2).\cr}
\label{2.13}
\end{equation}
Moreover, we express (\ref{2.5}) in the form
\begin{equation}
\eqal{
&(\mu+\nu)\nabla_z\divv_z\tilde u=-\mu(\Delta_z\tilde u-
\nabla_z\divv_z\tilde u)+\hat\varrho_s\tilde u_t+\hat\varrho_s\hat v_s\cdot
\nabla\tilde u\cr
&\quad+\nabla_z\tilde q-\tilde{\bar f}-k_1.\cr}
\label{2.14}
\end{equation}
Finally, we introduce the notation used in this paper
\begin{equation}
\eqal{
&\varphi_1=|u(t)|_{2,1}^2+|q(t)|_{2,1}^2,\quad
\tilde\varphi_1=|\tilde u(t)|_{2,1}^2+|\tilde q(t)|_{2,1}^2,\cr
&\hat\varphi_1=|\hat u(t)|_{2,1,\hat\Omega}^2+|\hat q(t)|_{2,1,\hat\Omega}^2,\cr
&\Phi_1=|u(t)|_{3,2}^2,\quad \tilde\Phi_1=|\tilde u(t)|_{3,2}^2,\quad
\hat\Phi_1=|\hat u(t)|_{3,2,\hat\Omega}^2,\cr
&A_1=|v_s(t)|_{2,1}^2+|\varrho_s(t)|_{2,1}^2,\quad
\tilde A_1=|\tilde v_s(t)|_{2,1}^2+|\tilde\varrho_s(t)|_{2,1}^2,\cr
&\hat A_1=|\hat v_s(t)|_{2,1,\hat\Omega}^2+
|\hat\varrho_s(t)|_{2,1,\hat\Omega}^2,\cr
&A_2=\|v_s(t)\|_3^2+\|\varrho_s(t)\|_3^2,\quad
\tilde A_2=\|\tilde v_s(t)\|_3^2+\|\tilde\varrho_s(t)\|_3^2,\cr
&\hat A_2=\|\hat v_s(t)\|_{3,\hat\Omega}^2+
\|\hat\varrho_s(t)\|_{3,\hat\Omega}^2,\cr
&A_3=\|v_{st}(t)\|_2^2+\|\varrho_{st}(t)\|_2^2,\cr
&A_4=A_2+A_3+\|v_{stt}\|_0^2\cr}
\label{2.15}
\end{equation}

\begin{remark}\label{r2.1}
Let $A$ be the elliptic operator
$$
Au=\mu\Delta u+\nu\nabla\divv u.
$$
For $u,v\in H^2(\Omega)\cap H_0^1(\Omega)$ we have the integration by parts
formula
$$
-\intop_\Omega v\cdot Audx=\intop_\Omega(\mu\nabla v\cdot\nabla u+\nu\divv v
\divv u)dx\equiv\intop_\Omega A^{1/2}v\cdot A^{1/2}udx,
$$
where
$$
A^{1/2}u=(\sqrt{\mu}\nabla u,\sqrt{\nu}\divv u).
$$
\end{remark}

\noindent
From \cite[Sect. 15]{BIN} we have the interpolation
\begin{equation}
|u|_3\le\varepsilon^{1/2}|u_{,x}|_2+c\varepsilon^{-1/2}|u|_2,
\label{2.16}
\end{equation}
where $\varepsilon\in(0,1)$. We frequently use the interpolation
\begin{equation}
\|u\|_s\le\varepsilon\|u\|_{s+k}+c/\varepsilon\|u\|_0,\quad
s,k\in\N,\ \ s+k\le3,\ \ \varepsilon\in(0,1).
\label{2.17}
\end{equation}

\section{Local existence}\label{s3}
\setcounter{equation}{0}

To prove local existence of solutions to problem (\ref{1.5}),
(\ref{1.10})--(\ref{1.12}) we formulate it in the form
\begin{equation}
\eqal{
&\varrho u_t-\mu\Delta u-\nu\nabla\divv u=\eta f_s+\varrho_sg-\nabla q-
[\eta(v_{s,t}+(v_s+u)\cdot\nabla v_s)\cr
&\quad+\varrho_su\cdot\nabla v_s]-[\eta(u+v_s)+\varrho_su]\nabla u,\cr}
\label{3.1}
\end{equation}
\begin{equation}
q_t+v\cdot\nabla q=-u\cdot\nabla p_s-\varkappa q\divv(v_s+u)-\varkappa
p_s\divv u,
\label{3.2}
\end{equation}
\begin{equation}
u|_{t=0}=u(0),\ \ q|_{t=0}=q(0)
\label{3.3}
\end{equation}
\begin{equation}
u|_S=0,\ \ v_s|_S=0,
\label{3.4}
\end{equation}
where $\varrho=\varrho_s+\eta$, $v=v_s+u$, $p=p_s+q$.

\noindent
Moreover, it is convenient to consider the equation
\begin{equation}
\eta_t+v\cdot\nabla\eta+\eta\divv v=-\varrho_s\divv u-u\cdot\nabla\varrho_s.
\label{3.5}
\end{equation}
To prove existence of solutions to problem (\ref{3.1})--(\ref{3.5}) we use the
following method of successive approximations. Let $u_n$ be given. Then
$\eta_n$, $q_n$, $u_{n+1}$ are calculated from the following problem
\begin{equation}
\eqal{
&\varrho_nu_{n+1,t}-\mu\Delta u_{n+1}-\nu\nabla\divv u_{n+1}=\eta_nf_s+
\varrho_sg-\nabla q_n\cr
&\quad-[\eta_nv_{s,t}+\eta_n(v_s+u_n)\cdot\nabla v_s+\varrho_su_n\cdot
\nabla v_s]\cr
&\quad-[\eta_n(v_s+u_n)+\varrho_su_n]\nabla u_n,\cr}
\label{3.6}
\end{equation}
\begin{equation}
q_{n,t}+v_n\cdot\nabla q_n=-u_n\cdot\nabla p_s-\varkappa q_n\divv(v_s+u_n)-
\varkappa p_s\divv u_n,
\label{3.7}
\end{equation}
\begin{equation}
\eta_{n,t}+v_s\cdot\nabla\eta_n=-\eta_n\divv(v_s+u_n)-\varrho_s\divv u_n-
u_n\cdot\nabla\varrho_s,
\label{3.8}
\end{equation}
\begin{equation}
u_{n+1}|_{t=0}=u(0),\ \ q_n|_{t=0}=q(0),\ \ \eta_n|_{t=0}=\eta(0),
\label{3.9}
\end{equation}
\begin{equation}
u_{n+1}|_S=0,\ \ v_s|_S=0,
\label{3.10}
\end{equation}
where $v_n=v_s+u_n$, $\varrho_n=\varrho_s+\eta_n$,
$q_n=A(\varrho_n^\varkappa-\varrho_s^\varkappa)$.

\noindent
The $u_0$ approximation is an extension of the initial data $u(0)$.

\noindent
To formulate results of this section we introduce the notation
\begin{equation}
\eqal{
&X_1(u_1,\dots,u_k,t)=\|u_1(t),\dots,u_k(t)\|_{2,\Omega},\cr
&\bar X_1(u_1,\dots,u_k,t)=\|u_1,\dots,u_k\|_{2,\infty,\Omega^t},\cr
&X_2(u_1,\dots,u_k,t)=|u_1(t),\dots,u_k(t)|_{2,1,\Omega},\cr
&\bar X_2(u_1,\dots,u_k,t)=|u_1,\dots,u_k|_{2,1,\infty,\Omega^t},\cr
&\alpha(u)=t^{1/2}\|u\|_{3,2,\Omega^t}.\cr}
\label{3.11}
\end{equation}

\begin{lemma}\label{l3.1}
Assume that $\eta(0)\in H^2(\Omega)$, $\varrho_s\in L_\infty(0,t;H^3(\Omega))$,
$v_s,u_n\in L_2(0,t;H^3(\Omega))$. Then
\begin{equation}
\|\eta_n\|_2^2\le\exp[c\alpha(v_n)][\|\varrho_s\|_{3,\infty,\Omega^t}\alpha(u_n)
+\|\eta(0)\|_2].
\label{3.12}
\end{equation}
Assume that there exists positive constants $\varrho_*$, $\varrho^*$ such that
$\varrho_*\le\varrho(0)\le\varrho^*$.\\
Then for a~sufficiently small time and $\|\eta(0)\|_2$ we have that
\begin{equation}
{1\over2}\varrho_*\le\varrho_n(t)\le2\varrho^*.
\label{3.13}
\end{equation}
\end{lemma}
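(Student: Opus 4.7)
The plan is to derive (3.12) by a standard $H^2$ energy estimate on the linear transport-type equation (3.8) and then to deduce (3.13) by combining that estimate with the Sobolev embedding $H^2(\Omega)\hookrightarrow L_\infty(\Omega)$ and a smallness-in-time argument.

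Rewriting (3.8) in the form
$$
\eta_{n,t}+v_s\cdot\nabla\eta_n=-\eta_n\divv(v_s+u_n)-\varrho_s\divv u_n-u_n\cdot\nabla\varrho_s\equiv F_n,
$$
I would apply a spatial derivative $\partial^\alpha$ with $|\alpha|\le 2$, multiply by $\partial^\alpha\eta_n$, integrate over $\Omega$, and sum on $\alpha$. The top-order advection term integrates by parts (the boundary piece vanishing since $v_s|_S=0$) to
$$
\intop_\Omega v_s\cdot\nabla(\partial^\alpha\eta_n)\,\partial^\alpha\eta_n\,dx=-{1\over 2}\intop_\Omega\divv v_s\,(\partial^\alpha\eta_n)^2\,dx,
$$
which is absorbed into the right side with constant $c\|v_s\|_3$ (using $|\divv v_s|_\infty\le c\|v_s\|_3$). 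The commutator $[\partial^\alpha,v_s\cdot\nabla]\eta_n$ and the lower-order product $\partial^\alpha[\eta_n\divv(v_s+u_n)]$ are handled by Moser-type inequalities together with $H^2\hookrightarrow L_\infty$, producing contributions of the form $c(\|v_s\|_3+\|u_n\|_3)\|\eta_n\|_2^2$. The forcing $\partial^\alpha F_n$ is bounded in $L_2$ by $c\|\varrho_s\|_3\|u_n\|_3$. Summing,
$$
{1\over 2}{d\over dt}\|\eta_n\|_2^2\le c(\|v_s\|_3+\|u_n\|_3)\|\eta_n\|_2^2+c\|\varrho_s\|_3\|u_n\|_3\,\|\eta_n\|_2.
$$

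Dividing by $\|\eta_n\|_2$ gives a linear ODI for $y(t)=\|\eta_n(t)\|_2$, and Gronwall yields
$$
\|\eta_n(t)\|_2\le\exp\bigg(c\!\intop_0^t(\|v_s\|_3+\|u_n\|_3)\,d\tau\bigg)\bigg(\|\eta(0)\|_2+c\|\varrho_s\|_{3,\infty,\Omega^t}\!\intop_0^t\!\|u_n\|_3\,d\tau\bigg).
$$
Using $v_n=v_s+u_n$ and Cauchy--Schwarz, $\intop_0^t\|w\|_3\,d\tau\le t^{1/2}\|w\|_{3,2,\Omega^t}=\alpha(w)$, this gives exactly (3.12). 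For (3.13), the embedding $H^2\hookrightarrow L_\infty$ provides $|\eta_n(t)|_\infty\le c\|\eta_n(t)\|_2$. Choosing $\|\eta(0)\|_2$ sufficiently small and then $t$ so small that $\alpha(u_n)$ and $\alpha(v_n)$ are also small makes the right-hand side of the above bound at most $\varrho_*/(2c)$, hence $|\eta_n|_\infty\le\varrho_*/2$; combined with $\varrho_*\le\varrho_s\le\varrho^*$ this yields $\varrho_n=\varrho_s+\eta_n\in[\varrho_*/2,2\varrho^*]$.

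The main obstacle I anticipate is the top-order commutator estimate: closing the $H^2$ inequality requires bounding $\|[\partial^\alpha,v_s\cdot\nabla]\eta_n\|_0$ for $|\alpha|=2$ without losing derivatives on $\eta_n$. The borderline contribution is a product of two second-order derivatives, and one must invoke the Moser-type product estimate $\|fg\|_k\le c(|f|_\infty\|g\|_k+|g|_\infty\|f\|_k)$ in three dimensions together with $\|f\|_3\ge c^{-1}|f|_\infty$ in order to pick up the factor $\|v_s\|_3$ rather than $\|v_s\|_2$, which is why the hypothesis $v_s,u_n\in L_2(0,t;H^3(\Omega))$ (not merely $H^2$) enters. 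The remaining ingredients---the integration by parts for the advection, Gronwall in time, and the smallness deduction of the density bounds---are routine.
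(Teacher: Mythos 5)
Your argument is essentially the same as the paper's: both perform an $L_2$ energy estimate on the transport equation (3.8) at each derivative order up to two (you package this as $\partial^\alpha$ with $|\alpha|\le 2$ plus commutator/Moser bounds; the paper writes out the zeroth, first and second order steps (3.14)--(3.16) separately), then Gronwall in time, then Sobolev embedding plus smallness of $t$ and $\|\eta(0)\|_2$ for (3.13). Both derivations in fact give $\|\eta_n\|_2$, not $\|\eta_n\|_2^2$, on the left of (3.12), so the square there is a typo in the paper rather than a flaw in your reasoning.
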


\begin{proof}
Multiplying (\ref{3.8}) by $\eta_n$ and integrating over $\Omega$ yields
\begin{equation}
{d\over dt}|\eta_n|_2\le c|v_{n,x}|_\infty|\eta_n|_2+c\|\varrho_s\|_1\|u_n\|_2.
\label{3.14}
\end{equation}
Differentiating (\ref{3.8}) with respect to $x$, multiplying by $\eta_{n,x}$
and integrating over $\Omega$, gives
\begin{equation}
{d\over dt}|\eta_{n,x}|_2\le c\|v_n\|_3\|\eta_n\|_1+c\|\varrho_s\|_2\|u_n\|_2.
\label{3.15}
\end{equation}
Differentiating (\ref{3.8}) twice with respect to $x$, multiplying by
$\eta_{n,xx}$ and integrating over $\Omega$ implies
\begin{equation}
{d\over dt}|\eta_{n,xx}|_2\le c\|v_n\|_3\|\eta_n\|_2+c\|\varrho_s\|_3\|u_n\|_3.
\label{3.16}
\end{equation}
Adding (\ref{3.14})--(\ref{3.16}), integrating the result with respect to time,
we obtain (\ref{3.12}). Having (\ref{3.12}) yields (\ref{3.13}). This
concludes the proof.
\end{proof}

\begin{lemma}\label{l3.2}
Assume that $g\in L_2(\Omega^t)$, $f_s\in L_{2,\infty}(\Omega^t)$,
$u(0)\in L_2(\Omega)$, $\varrho_s,\eta_n\in L_\infty(0,t;\Gamma_1^2(\Omega))$,
$v_s,u_n\in L_\infty(0,t;H^2(\Omega))$, $v_{s,t}\in L_{2,\infty}(\Omega^t)$.
Then
\begin{equation}
\eqal{
&|u_{n+1}(t)|_2^2+\|u_{n+1}\|_{1,2,\Omega^t}^2\le c\exp t(\bar X_2(\varrho_s)+
\bar X_2(\eta_n))\cdot\cr
&\quad\cdot[t\|\eta_n\|_{1,\infty,\Omega^t}^2\|f_s\|_{0,\infty,\Omega^t}^2+
\|\varrho_s\|_{1,\infty,\Omega^t}^2\|g\|_{0,2,\Omega^t}^2\cr
&\quad+t\|\eta_n\|_{1,\infty,\Omega^t}^2(1+\|v_{s,t}\|_{0,\infty,\Omega^t}^2)+
t(\bar X_1^2(\varrho_s,v_s)+\bar X_1^2(\eta_n,u_n))^2\bar X_1^2(\eta_n,u_n)\cr
&\quad+|u(0)|_2^2],\cr}
\label{3.17}
\end{equation}
$\bar X_1$, $\bar X_2$ are defined in (\ref{3.11}).
\end{lemma}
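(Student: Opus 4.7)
The plan is to run an $L^2$ energy estimate on equation (3.6) by testing against $u_{n+1}$ and closing it via Gronwall's inequality. First, I would multiply (3.6) by $u_{n+1}$ and integrate over $\Omega$. The kinetic term splits as
$$\intop_\Omega\varrho_n u_{n+1,t}\cdot u_{n+1}dx=\tfrac12\tfrac{d}{dt}\intop_\Omega\varrho_n|u_{n+1}|^2dx-\tfrac12\intop_\Omega\varrho_{n,t}|u_{n+1}|^2dx,$$
whose leading part is comparable to $|u_{n+1}(t)|_2^2$ by (3.13). Integration by parts on the viscous operator, exploiting $u_{n+1}|_S=0$ as in Remark \ref{r2.1}, produces the coercive quantity $\mu|\nabla u_{n+1}|_2^2+\nu|\divv u_{n+1}|_2^2$, which dominates $c\|u_{n+1}\|_1^2$ by Poincar\'e. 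The pressure term becomes $\intop_\Omega q_n\divv u_{n+1}dx$ after integration by parts and is bounded, via (2.4) and Young's inequality, by $\varepsilon\|u_{n+1}\|_1^2+c_\varepsilon|\eta_n|_2^2$.

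The remaining source terms on the right of (3.6) I would estimate by H\"older combined with the Sobolev embedding $H^2(\Omega)\hookrightarrow L^\infty(\Omega)$ valid in three dimensions, placing the $\Gamma_1^2$-regular carrier factors $\varrho_s,v_s,\eta_n,u_n$ in $L^\infty$ and keeping $u_{n+1}$ in $L^2$. Schematically, $|\intop\eta_nf_s\cdot u_{n+1}|\le|\eta_n|_\infty|f_s|_2|u_{n+1}|_2$, $|\intop\varrho_sg\cdot u_{n+1}|\le|\varrho_s|_\infty|g|_2|u_{n+1}|_2$, and analogous bounds for the terms containing $v_{s,t}$ and $\nabla v_s$; the cubic convective contribution $[\eta_n(v_s+u_n)+\varrho_s u_n]\nabla u_n\cdot u_{n+1}$ produces, after squaring in time, the $(\bar X_1^2(\varrho_s,v_s)+\bar X_1^2(\eta_n,u_n))^2\bar X_1^2(\eta_n,u_n)$ factor visible in (3.17). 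The stray term $\intop\varrho_{n,t}|u_{n+1}|^2dx$ from the kinetic splitting is handled by $|\varrho_{n,t}|_3|u_{n+1}|_3^2\le c\|\varrho_{n,t}\|_1|u_{n+1}|_2\|u_{n+1}\|_1$ and Young, with $\|\varrho_{n,t}\|_1\le c(\bar X_2(\varrho_s)+\bar X_2(\eta_n))$.

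Choosing $\varepsilon$ small enough to absorb every $\|u_{n+1}\|_1^2$ piece into the viscous dissipation leaves a differential inequality of the form
$$\tfrac{d}{dt}|u_{n+1}|_2^2+c\|u_{n+1}\|_1^2\le a(t)|u_{n+1}|_2^2+b(t),$$
with $a(t)\le c(\bar X_2(\varrho_s)+\bar X_2(\eta_n))$ in $L^\infty(0,t)$ and $b(t)$ collecting the remaining products of data norms. Integrating over $[0,t]$ and applying Gronwall then yields the prefactor $\exp[ct(\bar X_2(\varrho_s)+\bar X_2(\eta_n))]$ and the bracketed right-hand side of (3.17). The main technical obstacle is the bookkeeping rather than any single estimate: each of the roughly eight nonlinear contributions on the right of (3.6) must be split into a piece absorbable by the viscous dissipation, a piece linear in $|u_{n+1}|_2^2$ with $\bar X_2$-bounded coefficient feeding the Gronwall exponent, and a pure data term; a single misallocated power of $\|u_{n+1}\|_1$ would leave a term that cannot be closed.
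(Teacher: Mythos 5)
Your proposal follows essentially the same path as the paper: test (\ref{3.6}) against $u_{n+1}$, write the kinetic term as $\tfrac12\tfrac{d}{dt}\intop\varrho_n|u_{n+1}|^2-\tfrac12\intop\varrho_{n,t}|u_{n+1}|^2$, use the viscous integration-by-parts coercivity from Remark \ref{r2.1}, treat the pressure term via $u_{n+1}|_S=0$ and the equivalence (\ref{2.4}), absorb $\varepsilon\|u_{n+1}\|_1^2$ terms into the dissipation, and close with Gronwall using the lower bound (\ref{3.13}) on $\varrho_n$ and the fact that $\|\varrho_{n,t}\|_1\le c(\bar X_2(\varrho_s)+\bar X_2(\eta_n))$. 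The one divergence is your Sobolev placement for the source terms: you put $\eta_n$ and the other $H^2$ carriers in $L^\infty$ via $H^2\hookrightarrow L^\infty$, which produces $\|\eta_n\|_2^2$ factors, whereas the paper uses the cheaper $L^6\times L^2\times L^3$ splitting (Hölder with $H^1\hookrightarrow L^6$ on $\eta_n$ and on the test function) to get the $\|\eta_n\|_{1}^2$ appearing in (\ref{3.17}). Your version therefore proves a slightly weaker inequality with $\|\eta_n\|_{2,\infty,\Omega^t}^2$ in place of $\|\eta_n\|_{1,\infty,\Omega^t}^2$; this costs nothing downstream here since the iterates are controlled in $\Gamma_1^2$, but it is worth noting that the sharper exponent in (\ref{3.17}) comes for free once you spend the viscous coercivity $\|u_{n+1}\|_1^2$ on the right rather than only $|u_{n+1}|_2^2$.
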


\begin{proof}
Multiplying (\ref{3.6}) by $u_{n+1}$, integrating the result over $\Omega$
and applying the H\"older and the Young inequalities, we get
\begin{equation}
\eqal{
&{1\over2}\intop_\Omega\varrho_n\partial_tu_{n+1}^2dx+\|u_{n+1}\|_1^2\le c
[\|\eta_n\|_1^2\|f_s\|_0^2+\|\varrho_s\|_1^2\|g\|_0^2\cr
&\quad+\|\eta_n\|_0^2]+c[\|\eta_n\|_1^2|v_{s,t}|_2^2+\|\eta_n\|_1^2
(\|v_s\|_1^2+\|u_n\|_1^2)\|v_s\|_1^2]\cr
&\quad+c[\|\eta_n\|_1^2(\|u_n\|_1^2+\|v_s\|_1^2)+\|\varrho_s\|_1^2
\|u_n\|_1^2]\|u_n\|_1^2\cr
&\le c[\|\eta_n\|_1^2\|f_s\|_0^2+\|\varrho_s\|_1^2\|g\|_0^2+\|\eta_n\|_0^2]+c
\|\eta_n\|_1^2|v_{s,t}|_2^2\cr
&\quad+c(X_1^2(\varrho_s,v_s)+X_1^2(\eta_n,u_n))^2X_1^2(\eta_n,u_n).\cr}
\label{3.18}
\end{equation}
The first term on the l.h.s. of (\ref{3.18}) equals
$$
{1\over2}{d\over dt}\intop_\Omega\varrho_nu_{n+1}^2dx-\intop_\Omega
\varrho_{n,t}u_{n+1}^2dx,
$$
where the second term is bounded by
$$
\varepsilon|u_{n+1}|_6^2+c/\varepsilon(|\varrho_{s,t}|_3^2+|\eta_{n,t}|_3^2)
|u_{n+1}|_2^2.
$$Employing the consideration in (\ref{3.18}), using that $\varrho_n$ is
separeted from zero (see Lemma \ref{l3.1}) we obtain after integration with
respect to time inequality (\ref{3.17}). This concludes the proof.
\end{proof}

\begin{lemma}\label{l3.3}
Assume that $\varrho_s,v_s,\eta_n,u_n\in L_\infty(0,t;\Gamma_1^2(\Omega))$,
$v_{s,tt}\in L_\infty(0,t;\break L_2(\Omega))$,
$f_s\in L_\infty(0,t;\Gamma_0^1(\Omega))$, $g\in L_2(0,t;\Gamma_0^1(\Omega))$,
$u_t(0)\in L_2(\Omega)$. Then
\begin{equation}
\eqal{
&|u_{n+1,t}(t)|_2^2+\|u_{n+1,t}\|_{1,2,\Omega^t}^2\le c\exp
[t(|\varrho_s|_{2,1,\infty,\Omega^t}^2+|\eta_n|_{2,1,\infty,\Omega^t}^2)]\cr
&\quad\cdot c\big[t|\eta_n|_{2,1,\infty,\Omega^t}^2(1+|f_s|_{1,0,\infty,\Omega^t}^2)+
|\varrho_s|_{2,1,\infty,\Omega^t}^2|g|_{1,0,2,\Omega^t}^2\cr
&\quad+ct\big[|\eta_n|_{2,1,\infty,\Omega^t}^2\|v_{s,tt}\|_{0,\infty,\Omega^t}^2+
\bar X_2^2(\varrho_s,v_s)\bar X_2^2(\eta_n,u_n)\cr
&\quad+(\bar X_2^2(\varrho_s,v_s)\bar X_2^2(\eta_n,u_n)+\bar X_2^4(\varrho_s,v_s)+\bar X_2^4(\eta_n,u_n))\bar X_2^2(\eta_nu_n)\big]\cr
&\quad+|u_{,t}(0)|_2^2\big].\cr}
\label{3.19}
\end{equation}
\end{lemma}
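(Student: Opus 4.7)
The plan is to differentiate equation (\ref{3.6}) with respect to $t$ and then perform an $L_2$ energy estimate on $u_{n+1,t}$, in complete analogy with the proof of Lemma \ref{l3.2}, but with one extra time derivative placed on the whole equation. Writing (\ref{3.6}) schematically as
$\varrho_nu_{n+1,t}-Au_{n+1}=F_n$, time-differentiation yields
\begin{equation*}
\varrho_n u_{n+1,tt}+\varrho_{n,t}u_{n+1,t}-\mu\Delta u_{n+1,t}-\nu\nabla\divv u_{n+1,t}=F_{n,t}.
\end{equation*}
Multiplying by $u_{n+1,t}$ and integrating over $\Omega$, the leading term on the left gives
$\frac12\frac{d}{dt}\!\intop_\Omega\!\varrho_n u_{n+1,t}^2\,dx-\frac12\!\intop_\Omega\!\varrho_{n,t}u_{n+1,t}^2\,dx$,
and the viscous part, after integration by parts, produces the coercive quantity $\|u_{n+1,t}\|_1^2$ (up to the constants $\mu,\nu$, as in Remark \ref{r2.1}). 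The troublesome quadratic term $\intop_\Omega\varrho_{n,t}u_{n+1,t}^2dx$ is handled exactly as in (\ref{3.18}): by H\"older with $L_3\cdot L_6\cdot L_2$ and Young's inequality, it is bounded by $\varepsilon\|u_{n+1,t}\|_1^2+c/\varepsilon\,(|\varrho_{s,t}|_3^2+|\eta_{n,t}|_3^2)|u_{n+1,t}|_2^2$, which is then absorbed on the left and kept for Gronwall.

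Next I would expand $F_{n,t}$ termwise. The list of terms produced by $\partial_t$ hitting each factor in the right-hand side of (\ref{3.6}) splits into three types: (i) terms where $\partial_t$ falls on $\eta_n$, $\varrho_s$, $u_n$ or on $v_s$, which are controlled via $\bar X_2(\eta_n,u_n)$ and $\bar X_2(\varrho_s,v_s)$ from (\ref{3.11}) together with the embedding $H^1\hookrightarrow L_6$; (ii) terms where $\partial_t$ falls on the given data $f_s$, $g$ and $\nabla q_n$, controlled through $|f_s|_{1,0,\infty,\Omega^t}$, $|g|_{1,0,2,\Omega^t}$ and $|q_n|_{2,1,\infty,\Omega^t}$ (and hence, by (\ref{2.4}), by $|\eta_n|_{2,1,\infty,\Omega^t}$); (iii) the distinguished term $\eta_n v_{s,tt}$ arising from $\partial_t(\eta_n v_{s,t})$, which requires the hypothesis $v_{s,tt}\in L_\infty(0,t;L_2(\Omega))$. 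Each of these contributions is multiplied by $u_{n+1,t}$, estimated with H\"older and Young so as to produce on the left $\frac12\|u_{n+1,t}\|_1^2$ and on the right the corresponding norm appearing in (\ref{3.19}).

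At this stage I would have a pointwise-in-time differential inequality of the form
\begin{equation*}
\tfrac{d}{dt}\!\intop_\Omega\!\varrho_n u_{n+1,t}^2dx+\|u_{n+1,t}\|_1^2\le G(t)+H(t)\,|u_{n+1,t}|_2^2,
\end{equation*}
where $G(t)$ collects the explicit right-hand side of (\ref{3.19}) before the exponential factor, and $H(t)\le c(|\varrho_s|_{2,1}^2+|\eta_n|_{2,1}^2)$. Using Lemma \ref{l3.1} one has $\varrho_n\ge\tfrac12\varrho_*>0$, so $\intop_\Omega\varrho_n u_{n+1,t}^2dx$ is equivalent to $|u_{n+1,t}|_2^2$. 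Integrating in time from $0$ to $t$ and applying Gronwall's lemma to absorb $H$ produces the exponential prefactor $\exp[t(|\varrho_s|_{2,1,\infty,\Omega^t}^2+|\eta_n|_{2,1,\infty,\Omega^t}^2)]$ precisely as in (\ref{3.19}); the initial-value contribution $|u_{,t}(0)|_2^2$ appears from evaluating $\intop_\Omega\varrho_n u_{n+1,t}^2dx$ at $t=0$.

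The main obstacle is purely bookkeeping: ensuring that every one of the many terms coming from $\partial_t F_n$ (the product and chain-rule terms with two factors differentiated in time, such as $\eta_{n,t}(u_n\cdot\nabla v_s)$ or $\eta_n(u_{n,t}\cdot\nabla v_s)$, and the two cubic-type pieces $\partial_t\{[\eta_n(u+v_s)+\varrho_s u_n]\nabla u_n\}$) is bounded by products of the $\bar X_2$-norms as displayed in the last line of (\ref{3.19}). The cubic terms in particular require estimates of the form $|\psi_t|_3|\chi|_\infty|\varphi|_2\le c\|\psi\|_{\Gamma_1^2}\|\chi\|_{\Gamma_1^2}\|\varphi\|_{\Gamma_1^2}$, which uses the embedding $\Gamma_1^2\hookrightarrow L_\infty\cap W^1_3$ and justifies the appearance of fourth-power terms $\bar X_2^4$ in (\ref{3.19}). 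Once these pointwise bounds are in place, the Gronwall step is immediate.
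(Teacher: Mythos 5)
Your proposal is correct and follows the same route as the paper: differentiate (\ref{3.6}) in time, test with $u_{n+1,t}$, write $\int_\Omega\varrho_n u_{n+1,tt}\cdot u_{n+1,t}\,dx$ as $\tfrac12\tfrac{d}{dt}\int_\Omega\varrho_n u_{n+1,t}^2\,dx-\tfrac12\int_\Omega\varrho_{n,t}u_{n+1,t}^2\,dx$, absorb the residual $\int\varrho_{n,t}u_{n+1,t}^2$ via the $L_3\cdot L_6\cdot L_2$ H\"older/Young step as in (\ref{3.18}), bound each piece of $F_{n,t}$ (including the distinguished $\eta_n v_{s,tt}$ term) by the $\bar X_2$-type quantities, and close with the lower bound on $\varrho_n$ from (\ref{3.13}) plus Gronwall to produce the exponential prefactor and the $|u_{,t}(0)|_2^2$ term. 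This matches (\ref{3.20})--(\ref{3.21}) and the subsequent time integration almost line by line.
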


\begin{proof}
Differentiating (\ref{3.6}) with respect to $t$, multiplying the result by
$u_{n+1,t}$ and integrating over $\Omega$ yields
\begin{equation}
\eqal{
&\intop_\Omega(\varrho_{n,t}u_{n+1,t}+\varrho_nu_{n+1,tt}-Au_{n+1,t})\cdot
u_{n+1,t}dx\cr
&=\intop_\Omega(\eta_nf_s+\varrho_sg)\cdot u_{n+1,t}dx-\intop_\Omega\nabla
q_{n,t}\cdot u_{n+1,t}dx\cr
&\quad-\intop_\Omega[\eta_nv_{s,t}+\eta_n(v_s+u_n)\nabla v_s+\varrho_su_n\cdot
\nabla v_s]_{,t}\cdot u_{n+1,t}dx\cr
&\quad-\intop_\Omega([\eta_n(v_s+u_n)+\varrho_su_n]\nabla u_n)_{,t}\cdot
u_{n+1,t}dx.\cr}
\label{3.20}
\end{equation}
First we estimate the l.h.s. of (\ref{3.20}). The first term equals
$$
\bigg|\intop_\Omega\varrho_{n,t}u_{n+1,t}^2dx\bigg|\le\varepsilon
|u_{n+1,t}|_6^2+c/\varepsilon(\|\varrho_{s,t}\|_1^2+\|\eta_{n,t}\|_1^2)
|u_{n+1,t}|_2^2,
$$
the second implies
$$
{1\over2}\intop_\Omega\varrho_n{\partial\over\partial t}|u_{n+1,t}|^2dx=
{1\over2}{d\over dt}\intop_\Omega\varrho_nu_{n+1,t}^2dx-{1\over2}\intop_\Omega
\varrho_{n,t}u_{n+1,t}^2dx,
$$
where the last integral is estimated above. Finally, the last term on the l.h.s. equals
$$
\|u_{n+1,t}\|_1^2.
$$
The first term on the r.h.s. is bounded by
$$
\varepsilon|u_{n+1,t}|_6^2+c/\varepsilon(|\eta_n|_{2,1}^2|f_s|_{1,0}^2+
|\varrho_s|_{2,1}^2|g|_{1,0}^2)
$$
the second by
$$
\varepsilon|u_{n+1,xt}|_2^2+c/\varepsilon|\eta_{n,t}|_2^2
$$
and the last two by
$$\eqal{
&\varepsilon|u_{n+1,t}|_6^2+c/\varepsilon[|\eta_n|_{2,1}^2|v_{s,tt}|_2^2+
|\eta_n|_{2,1}|^2\|v_{s,t}\|_1^2+|\eta_n|_{2,1}^2(|v_s|_{2,1}^2+|u_n|_{2,1}^2)|v_s|_{2,1}^2\cr
&\quad+|\varrho_s|_{2,1}^2|u_n|_{2,1}^2|v_s|_{2,1}^2+|\eta_n|_{2,1}^2
(|v_s|_{2,1}^2+|u_n|_{2,1}^2)|u_n|_{2,1}^2+|\varrho_s|_{2,1}^2|u_n|_{2,1}^4]\cr
&\le\varepsilon|u_{n+1,t}|_6^2+c/\varepsilon[|\eta_n|_{2,1}^2|v_{s,tt}|_2^2+
|\eta_n|_{2,1}^2|v_s|_{2,1}^2+(X_2^4(\varrho_s,v_s)\cr
&\quad+X_2^2(\varrho_s,v_s)
X_2^2(\eta_n,u_n)+X_2^4(\eta_n,u_n))X_2^2(\eta_n,u_n)].\cr}
$$
Using the above estimates in (\ref{3.20}) and assuming that $\varepsilon$ is
sufficiently small, we obtain
\begin{equation}
\eqal{
&{d\over dt}\intop_\Omega\varrho_nu_{n+1,t}^2dx+\|u_{n+1,t}\|_1^2\le c
(\|\varrho_{s,t}\|_1^2+\|\eta_{n,t}\|_1^2)|u_{n+1,t}|_2^2\cr
&\quad+c(|\eta_{n,t}|_2^2+|\eta_n|_{2,1}^2|f_s|_{1,0}^2+|\varrho_s|_{2,1}^2
|g|_{1,0}^2)\cr
&\quad+c[|\eta_n|_{2,1}^2|v_{s,tt}|_2^2+X_2^2(\varrho_s,v_s)X_2^2(\eta_n,u_n)+
(X_2^4(\varrho_s,v_s)\cr
&\quad+X_2^2(\varrho_s,v_s)X_2^2(\eta_n,u_n)+X_2^4(\eta_n,u_n))X_2^2(\eta_n,u_n)].\cr}
\label{3.21}
\end{equation}
in view of(\ref{3.13}) and integration (\ref{3.21}) with respect to time we
get (\ref{3.19}). This concludes the proof.
\end{proof}

\begin{lemma}\label{l3.4.}
Assume that $u_{n+1}\in L_2(0,t;H^3(\Omega))$,
$\varrho_s,\eta_n,u_n\in L_\infty(0,t;\break\Gamma_1^2(\Omega))$,
$v_s\in L_\infty(0,t;\Gamma_0^2(\Omega))$,
$f_s\in L_\infty(0,t;\Gamma_0^1(\Omega))$, $g\in L_2(0,t;\Gamma_0^1(\Omega))$,
$u(0)\in L_2(\Omega)$, $u_t(0)\in H^1(\Omega)$.\\
Then
\begin{equation}
\eqal{
&\|u_{n+1,t}(t)\|_1^2+\|u_{n+1,t}\|_{2,2,\Omega^t}^2\le\varepsilon
\|u_{n+1}\\_{3,2,\Omega^t}^2\cr
&\quad+\varphi\bigg({1\over\varepsilon},t^a\bar X_2(\eta_n,u_n),
\bar X_2(\varrho_s,v_s),|f_s|_{1,0,\infty,\Omega^t},
|v_s|_{2,0,\infty,\Omega^t})\cdot\cr
&\quad\cdot[t^a\bar X_2^2(\eta_n,u_n)+|g|_{1,0,2,\Omega^t}^2+|u(0)|_2^2+
\|u_{,t}(0)\|_1^2],\cr}
\label{3.22}
\end{equation}
where $a>0$ and $\varepsilon\in(0,1)$.
\end{lemma}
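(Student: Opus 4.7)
\medskip\noindent\textbf{Plan.} To gain one derivative over Lemma~\ref{l3.3} the natural route is to differentiate (\ref{3.6}) in $t$, run an energy estimate at the $H^1$-level of $u_{n+1,t}$, and then close with an elliptic-regularity bootstrap. Differentiating (\ref{3.6}) in $t$ produces
$$
\varrho_n u_{n+1,tt}+\varrho_{n,t}u_{n+1,t}-Au_{n+1,t}=\partial_t\bigl\{\text{r.h.s.\ of (\ref{3.6})}\bigr\},
$$
together with $u_{n+1,t}|_S=u_{n+1,tt}|_S=0$ coming from (\ref{3.10}). I test this against $u_{n+1,tt}$ and apply Remark~\ref{r2.1} (with $v=u_{n+1,tt}$ and $u=u_{n+1,t}$, both vanishing on $S$) to rewrite the elliptic contribution as $\tfrac12\tfrac{d}{dt}(\mu|\nabla u_{n+1,t}|_2^2+\nu|\divv u_{n+1,t}|_2^2)$. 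Combined with the positive term $\int\varrho_n u_{n+1,tt}^2\,dx$ and the lower bound $\varrho_n\ge\tfrac12\varrho_*$ supplied by Lemma~\ref{l3.1}, integration over $(0,t)$ puts $\|u_{n+1,t}(t)\|_1^2+\|u_{n+1,tt}\|_{0,2,\Omega^t}^2$ on the left.

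\medskip\noindent
The right-hand side is expanded by the product rule into (i) the forcing pieces $\partial_t(\eta_n f_s+\varrho_s g)$, (ii) the pressure gradient $-\nabla q_{n,t}$, (iii) the coupling terms $\partial_t[\eta_n v_{s,t}+\eta_n(v_s+u_n)\nabla v_s+\varrho_s u_n\nabla v_s]$, and (iv) the cubic $\partial_t\{[\eta_n(v_s+u_n)+\varrho_s u_n]\nabla u_n\}$. Every term is estimated by H\"older, placing one factor in $L^\infty$ through $H^2\hookrightarrow L^\infty$ and another in $L^6$ through $H^1\hookrightarrow L^6$, so the resulting norms are exactly the $\bar X_2$ quantities appearing on the right of (\ref{3.22}). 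The crucial term $-\nabla q_{n,t}$ is handled by substituting for $q_{n,t}$ from (\ref{3.7}), which expresses $q_{n,t}$ in terms of $v_n,q_n,p_s,u_n$ with at most two spatial derivatives of $u_n$; hence $\nabla q_{n,t}$ costs no more than $\|u_n\|_2$, again of $\bar X_2$-type. Cross terms carrying $u_{n+1,tt}$ are reduced by Young with a small parameter and absorbed into the dissipation on the left.

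\medskip\noindent
To recover the $\|u_{n+1,t}\|_{2,2,\Omega^t}^2$ piece I reread the $t$-differentiated equation as an elliptic boundary-value problem
$$
-Au_{n+1,t}=-\varrho_n u_{n+1,tt}-\varrho_{n,t}u_{n+1,t}+\partial_t\bigl\{\text{r.h.s.\ of (\ref{3.6})}\bigr\},\qquad u_{n+1,t}|_S=0,
$$
and apply the standard $H^2$-estimate for the Lam\'e system, $\|u_{n+1,t}\|_2\le c\|\text{r.h.s.}\|_0+c\|u_{n+1,t}\|_0$. After squaring and integrating in time, the $\|u_{n+1,tt}\|_{0,2,\Omega^t}^2$ contribution is dominated by the bound just derived and the other pieces already fit the pattern of (\ref{3.22}). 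The residual $\varepsilon\|u_{n+1}\|_{3,2,\Omega^t}^2$ records those few highest-order products that cannot be closed within the quantities on the left of (\ref{3.22}); it is kept on the right with a free small parameter so that it can be absorbed at the next step against an independent bound on $\|u_{n+1}\|_{3,2,\Omega^t}^2$.

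\medskip\noindent
The main obstacle is the treatment of $\nabla q_{n,t}$: handled directly it is a genuine first derivative of a pressure-type quantity with no compatibility structure, and only the substitution from (\ref{3.7}), performed \emph{before} any integration by parts, keeps it at the level of $\bar X_2$. A secondary technical nuisance is the bookkeeping of the many products produced by $\partial_t$ of the cubic nonlinearity (iv): in three dimensions the interpolation (\ref{2.17}) is needed to ensure that in every product all but one factor sits in an $L^\infty$-in-time $\bar X_2$-controlled norm, so that the final bound takes the multiplicative $\varphi(\cdot)$-structure displayed in (\ref{3.22}).
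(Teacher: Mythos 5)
Your plan is correct but takes a genuinely different route from the paper's. The paper differentiates (\ref{3.6}) in time and tests against $-Au_{n+1,t}$, so the dissipation $\|Au_{n+1,t}\|_0^2\sim\|u_{n+1,t}\|_2^2$ appears directly on the left of (\ref{3.26}); the price is that after the integration by parts of Remark~\ref{r2.1} a genuine $u_{n+1,tt}$ survives in $\int A^{1/2}(\varrho_n u_{n+1,tt})\cdot A^{1/2}u_{n+1,t}$, and the paper must bound $\|u_{n+1,tt}\|_0$ by reading it off the equation itself in (\ref{3.29})--(\ref{3.31}). That detour is precisely where the factor $|Au_{n+1}|_3$, and hence the $\varepsilon\|u_{n+1}\|_{3,2,\Omega^t}^2$ on the right of (\ref{3.22}), enters. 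You instead test against $u_{n+1,tt}$, which puts $\int\varrho_n|u_{n+1,tt}|^2$ and $\tfrac12\tfrac{d}{dt}\int|A^{1/2}u_{n+1,t}|^2$ on the left in one stroke, and then recover $\|u_{n+1,t}\|_{2,2,\Omega^t}$ by an elliptic $H^2$ bootstrap of the Lam\'e system fed by $\|u_{n+1,tt}\|_{0,2,\Omega^t}$. This is the more classical parabolic coupling (energy at one level, elliptic regularity at the next), and it actually sidesteps the $\|u_{n+1}\|_3$ dependence altogether: nowhere in your chain does $Au_{n+1}$ without a time derivative appear, since $u_{n+1,tt}$ is controlled by the dissipation rather than by the equation. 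Your remark that the residual $\varepsilon\|u_{n+1}\|_{3,2,\Omega^t}^2$ ``records highest-order products that cannot be closed'' is therefore not quite right -- your argument simply does not produce such a term, and adding it on the right is harmless but unnecessary. What each approach buys: the paper's multiplier yields $\|u_{n+1,t}\|_2$ pointwise-in-time control under the $d/dt$ more naturally (which it never exploits, settling for the $L_2$-in-time version anyway), while yours gives the slightly sharper bound free of the $\varepsilon\|u_{n+1}\|_3^2$ correction. Your observation about substituting $q_{n,t}$ from (\ref{3.7}) before estimating $\nabla q_{n,t}$ is a sound precaution, though the paper handles that term more implicitly through the relation $q_n\sim\eta_n$ of (\ref{2.4}) inherited from (\ref{1.13}).
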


\begin{proof}
Differentiate (\ref{3.6}) with respect to time, multiply the result by\break
$-Au_{n+1,t}$ and integrate over $\Omega$. Then we have
\begin{equation}
\eqal{
&-\intop_\Omega(\varrho_{n,t}u_{n+1,t}+\varrho_nu_{n+1,tt})\cdot Au_{n+1,t}dx+
\intop_\Omega|Au_{n+1,t}|^2dx\cr
&=-\intop_\Omega(\eta_nf_s+\varrho_sg)_{,t}\cdot Au_{n+1,t}dx+\intop_\Omega
\nabla q_{n,t}\cdot Au_{n+1,t}dx\cr
&\quad+\intop_\Omega[\eta_nv_{s,t}+\eta_n(v_s+u)\cdot\nabla v_s+\varrho_su_n
\cdot\nabla v_s]_{,t}\cdot Au_{n+1,t}dx\cr
&\quad+\intop_\Omega([\eta_n(v_s+u_n)+\varrho_su_n]\nabla u_n)_{,t}\cdot
Au_{n+1,t}dx.\cr}
\label{3.23}
\end{equation}
Now we estimate the terms from (\ref{3.23}). The first part of the first
integral on the l.h.s. of (\ref{3.23}) is bounded by
$$
\bigg|\intop_\Omega\varrho_{n,t}u_{n+1,t}\cdot Au_{n+1,t}dx\bigg|\le\varepsilon
|Au_{n+1,t}|_2^2+c/\varepsilon|\varrho_{n,t}|_6^2|u_{n+1,t}|_3^2\equiv I_1.
$$
Using (\ref{2.16}) we estimate the second term in $I_1$ by
$$
\varepsilon_1|u_{n+1,xt}|_2^2+c\varepsilon_1|\varrho_{n,t}|_6^4|u_{n+1,t}|_2^2.
$$
In view of the above estimates, application of the H\"older and the Young
inequalities to the integrals from the r.h.s. of (\ref{3.23}) and using the
estimate
\begin{equation}
\|u\|_2\le c\|f\|_0
\label{3.24}
\end{equation}
which holds for solutions to the problem
\begin{equation}
\eqal{
&Au=f,\cr
&u|_S=0,\cr}
\label{3.25}
\end{equation}
we obtain
\begin{equation}
\eqal{
&-\intop_\Omega\varrho_nu_{n+1,tt}\cdot Au_{n+1,t}dx+\|u_{n+1,t}\|_2^2\le
c\|\varrho_{n,t}\|_1^2|u_{n+1,t}|_2^2\cr
&\quad+c|(\eta_nf_s+\varrho_sg)_{,t}|_2^2+c|\nabla q_{n,t}|_2^2\cr
&\quad+c|(\eta_nv_{s,t}+\eta_n(v_s+u_n)\cdot\nabla v_s+\varrho_su_n\cdot
\nabla v_s)_{,t}|_2^2\cr
&\quad+c|[(\eta_n(v_s+u_n)+\varrho_su_n)\cdot\nabla u_n]_{,t}|_2^2.\cr}
\label{3.26}
\end{equation}
Now, we estimate terms from (\ref{3.26}). Using the integration by parts
formula (see Remark \ref{r2.1}) the first integral on the l.h.s. of
(\ref{3.26}) takes the form
\begin{equation}
\intop_\Omega A^{1/2}\varrho_nu_{n+1,tt}A^{1/2}u_{n+1,t}dx+\intop_\Omega
\varrho_nA^{1/2}u_{n+1,tt}A^{1/2}u_{n+1,t}dx.
\label{3.27}
\end{equation}
The second integral in (\ref{3.27}) equals
$$\eqal{
&{1\over2}\intop_\Omega\varrho_n\partial_t|A^{1/2}u_{n+1,t}|^2dx={1\over2}
{d\over dt}\intop_\Omega\varrho_n|A^{1/2}u_{n+1,t}|^2dx\cr
&\quad-{1\over2}\intop_\Omega\varrho_{n,t}|A^{1/2}u_{n+1,t}|^2dx\equiv I_1.\cr}
$$
the second integral in $I_1$ is bounded by
$$
\varepsilon|A^{1/2}u_{n+1,t}|_6^2+c/\varepsilon|\varrho_{n,t}|_2^2
|A^{1/2}u_{n+1,t}|_3^2\equiv I_2.
$$
Using interpolation (\ref{2.16}) we get
$$
I_2\le\varepsilon\|u_{n+1,t}\|_2^2+\varepsilon_1c/\varepsilon
\|u_{n+1,t}\|_2^2+c/\varepsilon\varepsilon_1|\varrho_{n,t}|_2^4|u_{n+1,t}|_2^2.
$$
The first integral in (\ref{3.27}) is estimated by
\begin{equation}
\eqal{
&\varepsilon|u_{n+1,tt}|_2^2+c/\varepsilon|\varrho_{n,x}|_6^2
|u_{n+1,xt}|_3^2\le\varepsilon|u_{n+1,tt}|_2^2+\varepsilon_1
\|u_{n+1,t}\|_2^2\cr
&\quad+c/\varepsilon\varepsilon_1\|\varrho_n\|_2^4|u_{n+1,t}|_2^2.\cr}
\label{3.28}
\end{equation}
To estimate the first term in (\ref{3.28}) we calculate $u_{n+1,tt}$ from
(\ref{3.6}). Hence, we have
\begin{equation}
\eqal{
u_{n+1,t}&={1\over\varrho_n}Au_{n+1}+{1\over\varrho_n}(\eta_nf_s+\varrho_sg)-
{1\over\varrho_n}\nabla q_n-{1\over\varrho_n}[\eta_nv_{s,t}\cr
&\quad+\eta_n(v_s+u_n)\cdot\nabla v_s+
\varrho_su_n\cdot\nabla v_s]-{1\over\varrho_n}[\eta_n(v_s+u_n)\cr
&\quad+\varrho_su_n]\cdot\nabla u_n.\cr}
\label{3.29}
\end{equation}
Continuing, we get
\begin{equation}
\eqal{
u_{n+1,tt}&=\bigg({1\over\varrho_n}Au_{n+1}\bigg)_{,t}+\bigg({1\over\varrho_n}
(\eta_nf_s+\varrho_sg)\bigg)_t-\bigg({1\over\varrho_n}\nabla q_n\bigg)_t\cr
&\quad-\bigg({1\over\varrho_n}[\eta_nv_{s,t}+\eta_n(v_s+u_n)\cdot\nabla v_s+
\varrho_su_n\cdot\nabla v_s]\bigg)_{,t}\cr
&\quad-\bigg({1\over\varrho_n}[\eta_n(v_s+u_n)+\varrho_su_n]\cdot\nabla u_n\bigg)_{,t}.\cr}
\label{3.30}
\end{equation}
Taking, the $L_2$-norm of (\ref{3.30}) ahnd using (\ref{3.13}) we obtain
\begin{equation}
\eqal{
|u_{n+1,tt}|_2^2&\le c\|u_{n+1,t}\|_2^2+c|\varrho_{n,t}|_6^2|Au_{n+1}|_3^2\cr
&\quad+\varphi(|\varrho_s|_{2,1},|\eta_n|_{2,1})[|\eta_n|_{2,1}^2|f_s|_{1,0}^2+
|g|_{1,0}^2+|\eta_n|_{2,1}^2]\cr
&\quad+\varphi(X_2(\varrho_s,v_s),X_2(\eta_n,u_n))[|v_s|_{2,0}^2+X_2^2(\varrho_s,v_s)\cr
&\quad+X_2^2(\eta_n,u_n)]X_2^2(\eta_n,u_n).\cr}
\label{3.31}
\end{equation}
Similarly, the r.h.s. of (\ref{3.26}) is estimatede by the last two terms on
the r.h.s. of (\ref{3.21}).

Using the above estimates in (\ref{3.26}) and assuming that $\varepsilon$,
$\varepsilon_1$ are sufficiently small we obtain
\begin{equation}
\eqal{
&{d\over dt}\intop_\Omega\varrho_n|A^{1/2}u_{n+1,t}|^2dx+\|u_{n+1,t}\|_2^2\cr
&\le\varepsilon_2\|u_{n+1}\|_3^2+c|\varrho_n|_{2,1}^4(|u_{n+1,t}|_2^2+
c/\varepsilon_2|u_{n+1}|_2^2)\cr
&\quad+\varphi(|\varrho_s|_{2,1},|\eta_n|_{2,1})[|\eta_n|_{2,1}^2|f_s|_{1,0}^2+
|g|_{1,0}^2+|\eta_n|_{2,1}^2]\cr
&\quad+\varphi(X_2(\varrho_s,v_s),X_2(\eta_n,u_n))[|v_s|_{2,0}^2+X_2^2(\varrho_s,v_s)\cr
&\quad+X_2^2(\eta_n,u_n)]X_2^2(\eta_n,u_n).\cr}
\label{3.32}
\end{equation}
Using notation (\ref{3.11}) we express (\ref{3.17}) in the form
\begin{equation}
\eqal{
&|u_{n+1}(t)|_2^2+\|u_{n+1}\|_{1,2,\Omega^t}^2\le\varphi(t^a\bar X_2
(\eta_n,u_n),\bar X_2(\varrho_s,v_s),\cr
&\quad \|f_s\|_{0,\infty,\Omega^t})[t^a\bar X_2^2(\eta_n,u_n)+
\|g\|_{0,2,\Omega^t}^2+|u(0)|_2^2].\cr}
\label{3.33}
\end{equation}
Similarly (\ref{3.19}) gives
\begin{equation}
\eqal{
&|u_{n+1,t}(t)|_2^2+\|u_{n+1,t}\|_{1,2,\Omega^t}^2\le\varphi(t^a\bar X_2
(\eta_n,u_n),\bar X_2(\varrho_s,v_s),\cr
&\quad |f|_{1,0,\infty,\Omega^t})[t^a\bar X_2^2(\eta_n,u_n)+
|g|_{1,0,2,\Omega^t}^2+|u_t(0)|_2^2],\cr}
\label{3.34}
\end{equation}
where $a>0$. Integrating (\ref{3.32}) with respect to time and using (\ref{3.33}) and
(\ref{3.34}) we obtain (\ref{3.22}). This concludes the proof.
\end{proof}

\noindent
To estimate the first norm on the r.h.s. of (\ref{3.22}) we need

\begin{lemma}\label{l3.5}
Assume that
\begin{equation}
\eqal{
&u_{n+1,t}\in L_2(0,t;H^2(\Omega)),\quad
\varrho_s,v_s,\eta_n,u_n\in L_\infty(0,t;\Gamma_1^2(\Omega)),\cr
&f_s\in L_\infty(0,t;H^1(\Omega)),\quad g\in L_2(0,t;H^1(\Omega)),\quad
u(0)\in\Gamma_1^2(\Omega).\cr}
\label{3.35}
\end{equation}
Then
\begin{equation}
\eqal{
&\|u_{n+1}(t)\|_2^2+\|u_{n+1}\|_{3,2,\Omega^t}^2\le\varepsilon
\|u_{n+1,t}\|_{2,2,\Omega^t}^2\cr
&\quad+\exp(t\bar X_2(\varrho_s,\eta_n)){1\over\varepsilon}[\varphi(t^a\bar X_2
(\eta_n,u_n),\bar X_2(\varrho_s,v_s),\|f_s\|_{1,\infty,\Omega^t})\cdot\cr
&\quad\cdot t^a\bar X_1(\eta_n,u_n)(1+\|v_{s,t}\|_{0,\infty,\Omega^t}^2)+
\|\varrho_s\|_{1,\infty,\Omega^t}^2\|g\|_{1,2,\Omega^t}^2+|u(0)|_{2,1}^2].\cr}
\label{3.36}
\end{equation}
\end{lemma}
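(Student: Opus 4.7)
The plan is to view equation (\ref{3.6}) as an elliptic boundary-value problem for $u_{n+1}$ at each fixed time $t$, by transferring $\varrho_n u_{n+1,t}$ to the right-hand side:
\[
-A u_{n+1}=-\varrho_n u_{n+1,t}+F_n,\qquad u_{n+1}|_S=0,
\]
where $F_n$ collects the remaining terms on the right of (\ref{3.6}). Since $A=\mu\Delta+\nu\nabla\divv$ is a strongly elliptic system on the smooth bounded domain $\Omega$, the classical regularity theory (cf.\ (\ref{3.24})--(\ref{3.25})) gives, pointwise in $t$,
\[
\|u_{n+1}(t)\|_2\le c\|\varrho_n u_{n+1,t}(t)-F_n(t)\|_0,\qquad \|u_{n+1}(t)\|_3\le c\|\varrho_n u_{n+1,t}(t)-F_n(t)\|_1.
\]
Squaring and integrating in time (resp.\ taking the supremum for the first inequality) will then produce (\ref{3.36}), provided I control the right-hand sides correctly.

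The crucial step is to absorb the $\varrho_n u_{n+1,t}$ contribution as a small perturbation. Using $H^2(\Omega)\hookrightarrow L_\infty(\Omega)$, the Sobolev product estimate, and the pointwise bounds (\ref{3.13}), one has $\|\varrho_n u_{n+1,t}\|_1\le c\|\varrho_n\|_2\|u_{n+1,t}\|_1$; combined with the interpolation (\ref{2.17}) and Young's inequality, the squared bound splits as $\varepsilon\|u_{n+1,t}\|_2^2+(c/\varepsilon)\|\varrho_n\|_2^4\|u_{n+1,t}\|_0^2$. Integrating in $t$, this yields the leading term $\varepsilon\|u_{n+1,t}\|_{2,2,\Omega^t}^2$ on the right of (\ref{3.36}), together with a factor carrying $\|\varrho_n\|_2^4$, which by Lemma \ref{l3.1} is controlled through $\bar X_2(\varrho_s,\eta_n)$, producing the $\exp(t\bar X_2(\varrho_s,\eta_n))$ factor after the Gr\"onwall step already used in Lemma \ref{l3.1}.

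The remaining terms in $F_n$, namely $\eta_n f_s$, $\varrho_s g$, $\nabla q_n$, $\eta_n v_{s,t}$, $[\eta_n(v_s+u_n)+\varrho_s u_n]\cdot\nabla v_s$, and $[\eta_n(v_s+u_n)+\varrho_s u_n]\nabla u_n$, are all schematically of the form (coefficient in $\Gamma_1^2$)$\cdot$(coefficient in $\Gamma_1^2$), hence bounded in $H^1$ by the product estimate $\|ab\|_1\le c\|a\|_2\|b\|_1+c\|a\|_1\|b\|_2$ applied repeatedly. This produces the factors $\bar X_1(\eta_n,u_n)$, $\bar X_2(\varrho_s,v_s)$, $\|f_s\|_{1,\infty,\Omega^t}$, $\|v_{s,t}\|_{0,\infty,\Omega^t}$, $\|\varrho_s\|_{1,\infty,\Omega^t}$ and $\|g\|_{1,2,\Omega^t}$ visible in (\ref{3.36}), while the power $t^a$ arises from pulling an $L_\infty$-in-time norm out of a $t$-integral.

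The main technical obstacle will be the $\nabla q_n$ term, since $q_n$ satisfies only the transport equation (\ref{3.7}) and admits no independent $H^2$ elliptic control. Here I would exploit the pointwise identity $q_n=A(\varrho_n^\varkappa-\varrho_s^\varkappa)$ together with (\ref{1.13}) and the bounds (\ref{3.13}), reducing $\|q_n\|_2$ to a polynomial expression in $\|\eta_n\|_2+\|\varrho_s\|_2$; Lemma \ref{l3.1} then bounds $\|\eta_n\|_2$ in terms of $u_n$-data and initial data, which is precisely what appears on the right of (\ref{3.36}). A secondary bookkeeping challenge is to keep all polynomial dependencies inside a single increasing function $\varphi$, so that the resulting inequality has the stated shape and the $\varepsilon\|u_{n+1,t}\|_{2,2,\Omega^t}^2$ on the right can later be absorbed into the left-hand side of (\ref{3.22}).
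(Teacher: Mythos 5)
Your proposal is correct in substance but takes a genuinely different route from the paper. The paper proves Lemma \ref{l3.5} by localizing: it introduces the partition of unity from Section \ref{s2}, rewrites (\ref{3.6}) in local coordinates as (\ref{3.37}), differentiates in the tangential variable $\tau$, tests against $-A_z\tilde u_{n+1,\tau}$ to obtain the parabolic-type differential inequality (\ref{3.39}), and then recovers the missing normal derivatives from the equation itself via (\ref{3.40}). It then integrates in time, sums over the partition of unity, and closes with the time-derivative trick $\frac{d}{dt}\intop_\Omega u_{n+1,xx}^2dx\le\varepsilon\|u_{n+1,t}\|_2^2+c/\varepsilon\|u_{n+1,xx}\|_0^2$ plus interpolation (\ref{2.17}). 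This is, in effect, a hands-on derivation of $H^3$ regularity for the Lam\'e operator $A$ with Dirichlet condition, built only on the $H^2$ estimate (\ref{3.24}) that the paper actually states. You, by contrast, invoke the global $H^3$ elliptic estimate $\|u_{n+1}\|_3\le c\|Au_{n+1}\|_1$ directly, apply it pointwise in time, square, and integrate. That is shorter and avoids the entire localization machinery, but it rests on an elliptic regularity statement one order above (\ref{3.24}), which the paper never records (it is classical ADN theory for strongly elliptic systems on a smooth domain, so the step is legitimate — just be aware it is an extra input). Your $\varepsilon$-absorption of $\varrho_n u_{n+1,t}$, treatment of $\nabla q_n$ via (\ref{1.13}), (\ref{3.13}) and Lemma \ref{l3.1}, and product estimates for the remaining terms in $F_n$ all reproduce the right-hand side of (\ref{3.36}) as claimed. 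One small mis-attribution: the $\exp(t\bar X_2(\varrho_s,\eta_n))$ factor does not come from the Gr\"onwall step of Lemma \ref{l3.1}; it enters through the bounds (\ref{3.17}) and (\ref{3.19}) (Lemmas \ref{l3.2}, \ref{l3.3}) that you need to control $\|u_{n+1,t}(t)\|_0^2$ pointwise in your $H^2$ estimate and the corresponding time-integrated norm in the $H^3$ estimate.
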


\begin{proof}
To prove the lemma we have to use the local considerations. We restrict them
to neighborhoods near the boundary, because estimates in interior
subdomains are simpler and similar. In view of notation in Section \ref{s2}
we transform (\ref{3.6}) in accordance to (\ref{2.5}). Then we have
\begin{equation}
\eqal{
&\hat\varrho_n\tilde u_{n+1,t}-A_z\tilde u_{n+1}=\hat\eta_n\tilde f_s+
\hat\varrho_s\tilde g-\hat\nabla\hat q_n\hat\zeta\cr
&\quad-[\hat\eta_n\hat v_{s,t}+\hat\eta_n(\hat v_s+\hat u_n)\cdot\hat\nabla
\hat v_s+\hat\varrho_s\hat u_n\cdot\hat\nabla\hat v_s]\hat\zeta\cr
&\quad-[\hat\eta_n(\hat v_s+\hat u_n)+\hat\varrho_s\hat u_n]\cdot\hat\nabla\hat u_n
\hat\zeta+(\hat A_x-A_z)\tilde u_{n+1}\cr
&\quad-[\mu(2\hat\nabla_k\hat u_{n+1}\hat\nabla_k\hat\zeta+\hat u_{n+1}
\hat\Delta\hat\zeta)+\nu(\hat\divv\hat u_{n+1}\hat\nabla\hat\zeta\cr
&\quad+\hat\nabla\hat u_k\hat\nabla_k\hat\zeta+\hat u_k\hat\nabla\hat\nabla_k
\hat\zeta)],\cr}
\label{3.37}
\end{equation}
where $A_z=\mu\Delta_z+\nu\nabla_z\divv_z$,
$\hat A_x=\mu\hat\nabla_x^2+\nu\hat\nabla_x\hat\divv_x$

\noindent
Differentiating (\ref{3.37}) with respect to $\tau$, multiplying the result by
$-A_z\tilde u_{n+1,\tau}$, integrating over $\hat\Omega$ and by parts, we get
\begin{equation}
\eqal{
&\intop_{\hat\Omega}A_z^{1/2}(\hat\varrho_n\tilde u_{n+1,t})_{,\tau}
A_z^{1/2}\tilde u_{n+1,\tau}dz+\intop_{\hat\Omega}
|A_z\tilde u_{n+1,\tau}|^2dz\cr
&=\intop_{\hat\Omega}[\hat\eta_n\tilde f_s+\hat\varrho_s\tilde g]_{,\tau}
A_z\tilde u_{n+_1,\tau}dz+\intop_{\hat\Omega}
(\hat\nabla\hat q_n\hat\zeta)_{,\tau}\cdot A_z\tilde u_{n+1,\tau}dz\cr
&\quad-\intop_{\hat\Omega}([\hat\eta_n\hat v_{s,t}+\hat\eta_n(\hat v_s+
\hat u_n)\cdot\hat\nabla\hat v_s+\hat\varrho_s\hat u_n\cdot\hat\nabla\hat v_s]
\hat\zeta)_{,\tau}A_z\tilde u_{n+1,\tau}dz\cr
&\quad-\intop_{\hat\Omega}([\hat\eta_n(\hat v_s+\hat u_n)+\hat\varrho_s\hat u_n]
\cdot\hat\nabla\hat u_n\hat\zeta)_{,\tau}A_z\tilde u_{n+1,\tau}dz\cr
&\quad+\intop_{\hat\Omega}[(\hat A_x-A_z)\tilde u_{n+1}]_{,\tau}\cdot
A_z\tilde u_{n+1,\tau}dz\cr
&\quad-\intop_{\hat\Omega}[\mu(2\hat\nabla_k\hat u_{n+1}\hat\nabla_k\hat\zeta+
\hat u_{n+1}\hat\Delta\hat\zeta)+\nu(\hat\divv\hat u_{n+1}\hat\nabla\hat\zeta+
\hat\nabla\hat u_{n+1,k}\hat\nabla_k\hat\zeta\cr
&\quad+\hat u_{n+1,k}\hat\nabla\hat\nabla_k\hat\zeta)]_{,\tau}\cdot A_z
\tilde u_{n+1,\tau}dx.\cr}
\label{3.38}
\end{equation}
The first term on the l.h.s. of (\ref{3.38}) equals
$$\eqal{
&\intop_{\hat\Omega}\hat\varrho_nA^{1/2}\tilde u_{n+1,\tau t}\cdot A^{1/2}
\tilde u_{n+1,\tau}dx+\intop_{\hat\Omega}[\hat\varrho_{n,\tau}A^{1/2}
\tilde u_{n+1,t}\cr
&\quad+A^{1/2}\hat\varrho_n\tilde u_{n+1,\tau t}+A^{1/2}\hat\varrho_{n,\tau}
\hat u_{n+1,t}]\cdot A^{1/2}\tilde u_{n+1,\tau}dz\equiv I,\cr}
$$
where the first term in $I$ takes the form
$$\eqal{
I_1&\equiv{1\over2}\intop_{\hat\Omega}\hat\varrho_n\partial_t
|A^{1/2}\tilde u_{n+1,\tau}|^2dz\cr
&={1\over2}{d\over dt}\intop_{\hat\Omega}\hat\varrho_n
|A^{1/2}\tilde u_{n+1,\tau}|^2dz-{1\over2}\intop_{\hat\Omega}\hat\varrho_{n,t}
|A^{1/2}\tilde u_{n+1,\tau}|^2dz,\cr}
$$
and the second term in $I_1$ is bounded by
$$\eqal{
&c|\hat\varrho_{n,t}|_3|A^{1/2}\tilde u_{n+1,\tau}|_3^2\le c
\|\hat\varrho_{n,t}\|_1\|\tilde u_{n+1,\tau}\|_2\|\tilde u_{n+1,\tau}\|_1\cr
&\le\varepsilon\|\tilde u_{n+1,\tau}\|_2^2+c/\varepsilon
\|\hat\varrho_{n,t}\|_1^2\|\tilde u_{n+1,\tau}\|_1^2\cr
&\le\varepsilon\|\tilde u_{n+1,\tau}\|_2^2+{c\over\varepsilon}
\|\hat\varrho_{n,t}\|_1^2\|\tilde u_{n+1,\tau}\|_2\|\tilde u_{n+1,\tau}\|_0\cr
&\le\varepsilon\|\tilde u_{n+1,\tau}\|_2^2+c/\varepsilon
\|\hat\varrho_{n,t}\|_1^4\|\tilde u_{n+1,\tau}\|_0^2.\cr}
$$
We bound the second term in $I$ by
$$
\varepsilon\|\tilde u_{n+1,\tau}\|_2^2+c/\varepsilon\|\hat\varrho_n\|_2^2
\|\tilde u_{n+1,t}\|_1^2.
$$
Next, we examine the terms from the r.h.s. of (\ref{3.38}). The first is
bounded by
$$
\varepsilon\|A_z\tilde u_{n+1,\tau}\|_0^2+c/\varepsilon
(\|\hat\eta_n\|_{2,\hat\Omega}^2\|\tilde f_s\|_1^2+
\|\hat\varrho_s\|_{2,\hat\Omega}^2\|\tilde g\|_1^2),
$$
the second by
$$
\varepsilon\|A_z\tilde u_{n+1,\tau}\|_0^2+c/\varepsilon\|\hat q_n\|_2^2,
$$
the third by
$$\eqal{
&\varepsilon\|A_z\tilde u_{n+1,\tau}\|_0^2+c/\varepsilon
(\|\hat\eta_n\|_{2,\hat\Omega}^2|\hat v_s|_{2,1,\hat\Omega}^2+
\|\hat\eta_n\|_{2,\hat\Omega}^2(\|\hat v_s\|_{2,\hat\Omega}^2+
\|\hat u_n\|_{2,\hat\Omega}^2)\cr
&\quad+\|\hat\varrho_s\|_{2,\hat\Omega}^2\|\hat u_n\|_{2,\hat\Omega}^2
\|\hat v_s\|_{2,\hat\Omega}^2),\cr}
$$
the fourth by
$$
\varepsilon\|A_z\tilde u_{n+1,\tau}\|_0^2+c/\varepsilon
[\|\hat\eta_n\|_{2,\hat\Omega}^2(\|\hat v_s\|_{2,\hat\Omega}^2+
\|\hat u_n\|_{2,\hat\Omega}^2)+\|\hat\varrho_s\|_{2,\hat\Omega}^2
\|\hat u_n\|_{2,\hat\Omega}^2]\cdot\|\hat u_n\|_{2,\hat\Omega}^2.
$$
Finally, the last but one term by
$$
c\lambda\|\tilde u_{n+1,\tau}\|_2^2
$$
and the last by
$$
\varepsilon\|A_z\tilde u_{n+1,\tau}\|_0^2+c/\varepsilon
\|\hat u_{n+1}\|_{2,\hat\Omega}^2.
$$
Employing the above estimates in (\ref{3.38}) and assuming that $\varepsilon$
is sufficiently small, we derive
\begin{equation}
\eqal{
&{d\over dt}\intop_{\hat\Omega}\hat\varrho_n|A_z^{1/2}\tilde u_{n+1,\tau}|^2dz+
|A_z\tilde u_{n+1,\tau}|_2^2\cr
&\le c(\|\hat\varrho_n\|_{2,\hat\Omega}^2\|\tilde u_{n+1,t}\|_1^2+
\|\hat\varrho_{n,t}\|_1^4\|\tilde u_{n+1,\tau}\|_0^2\cr
&\quad+\|\hat\eta_n\|_{2,\hat\Omega}^2\|\tilde f_s\|_1^2+
\|\hat\varrho_s\|_{2,\hat\Omega}^2\|\tilde g\|_1^2+
\|\hat\eta_n\|_{2,\hat\Omega}^2)\cr
&\quad+\varphi(X_1(\hat\eta_n,\hat u_n),X_2(\hat\varrho_s,\hat v_s))
X_1^2(\hat\eta_n,\hat u_n)+c\lambda\|\tilde u_{n+1,\tau}\|_2^2\cr
&\quad+c\|\hat u_{n+1}\|_{2,\hat\Omega}^2.\cr}
\label{3.39}
\end{equation}
From (\ref{3.37}) we have
\begin{equation}
\eqal{
&\|\tilde u_{n+1,zzz}\|_0^2\le c\|\tilde u_{n+1,zz\tau}\|_0^2+c
[\|\hat\varrho_n\|_{2,\hat\Omega}^2\|\tilde u_{n+1,t}\|_1^2\cr
&\quad+\|\hat\eta_n\|_{2,\hat\Omega}^2\|\tilde f_s\|_1^2+
\|\hat\varrho_s\|_{2,\hat\Omega}^2\|\tilde g\|_1^2+
\|\hat\eta_n\|_{2,\hat\Omega}^2]\cr
&\quad+\varphi(X_1(\hat\eta_n,\hat u_n),X_2(\hat\varrho_s,\hat v_s))
X_1^2(\hat\eta_n,\hat u_n)+c\lambda\|\tilde u_{n+1}\|_3^2\cr
&\quad+c\|\hat u_{n+1}\|_{2,\hat\Omega}^2.\cr}
\label{3.40}
\end{equation}
Integrating (\ref{3.39}) and (\ref{3.40}) with respect to time, adding, using
that $\lambda$ is sufficietnly small, using estimates (\ref{3.17}) and
(\ref{3.19}), we obtain
\begin{equation}
\eqal{
&\intop_{\hat\Omega}\hat\varrho_n|\tilde u_{n+1,\tau z}|^2dz+
\|\tilde u_{n+1}\|_{3,2,\Omega^t}^2\cr
&\le\varphi(t^a\bar X_1(\hat\eta_n,\hat u_n),\bar X_2(\hat\varrho_s,\hat v_s),
\|\tilde f_s\|_{1,\infty,\Omega^t})t^a\bar X_1^2(\hat\eta_n,\hat u_n)\cr
&\quad+c\|\hat\varrho_s\|_{2,\hat\Omega}^2\|\tilde g\|_1^2+c
\|\tilde u(0)\|_2^2+c\|\tilde u_t(0)\|_0^2+
c\|\hat u_{n+1}\|_{2,2,\hat\Omega^t}^2.\cr}
\label{3.41}
\end{equation}
Passing in neighborhoods near the boundary to the old variables $x$, deriving
similar estimates in interior subdomains, summing up over all neighborhoods
of the partition of unity and using the expression
$$
{d\over dt}\intop_\Omega u_{n+1,xx}^2dx\le\varepsilon\|u_{n+1,t}\|_2^2+
c/\varepsilon\|u_{n+1,xx}\|_0^2
$$
integrated with respect to time we obtain the inequality
\begin{equation}
\eqal{
&\|u_{n+1}(t)\|_2^2+\|u_{n+1}\|_{3,2,\Omega^t}^2\le\varepsilon
\|u_{n+1,t}\|_{2,2,\Omega^t}^2\cr
&\quad+c/\varepsilon\|u_{n+1,xx}\|_{0,2,\Omega^t}^2+\varphi(t^a\bar X_2
(\eta_n,u_n),\bar X_2(\varrho_s,v_s),\|f_s\|_{1,\infty,\Omega^t})\cdot\cr
&\quad\cdot t^a\bar X_1^2(\eta_n,u_n)+c\|\varrho_s\|_{2,\infty,\Omega^t}^2
\|g\|_{1,2,\Omega^t}^2+c|u(0)|_{2,1}^2,\cr}
\label{3.42}
\end{equation}
where $a>0$. Using the interpolation inequality (\ref{2.17}) in the second norm on the
r.h.s. of (\ref{3.42}) we obtain
\begin{equation}
\eqal{
&\|u_{n+1}(t)\|_2^2+\|u_{n+1}\|_{3,2,\Omega^t}^2\le\varepsilon
\|u_{n+1,t}\|_{2,2,\Omega^t}^2+c/\varepsilon\|u_{n+1}\|_{0,2,\Omega^t}^2\cr
&\quad+\varphi(t^a\bar X_2(\eta_n,u_n),\bar X_2(\varrho_s,v_s),
\|f_s\|_{1,\infty,\Omega^t})t^a\bar X_1^2(\eta_n,u_n)\cr
&\quad+c\|\varrho_s\|_{2,\infty,\Omega^t}^2\|g\|_{1,2,\Omega^t}^2+c|u(0)|_{2,1}^2.\cr}
\label{3.43}
\end{equation}
We use (\ref{3.17}) to estimate the second norm on the r.h.s. of (\ref{3.43}).
Then (\ref{3.36}) follows. This concludes the proof.
\end{proof}

\begin{lemma}\label{l3.6}
Assume that
\begin{equation}
\eqal{
&\varrho_s,v_s\in L_\infty(0,t;\Gamma_1^2(\Omega))\cap L_\infty(0,t;H^3(\Omega)),
\quad v_s\in L_\infty(0,t;\Gamma_0^2(\Omega)),\cr
&f_s\in L_\infty(0,t;\Gamma_0^1(\Omega)),\quad
g\in L_2(0,t;\Gamma_0^1(\Omega)),\quad \eta(0)\in H^2(\Omega),\cr
&u(0)\in\Gamma_1^2(\Omega),\quad t\le T.\cr}
\label{3.44}
\end{equation}
Then there exists $T_*$ sufficietntly small and $M$ which depends on the norms
from (\ref{3.44}) that
\begin{equation}
|u_n(t)|_{2,1}+|u_n|_{3,2,2,\Omega^t}\le M,\quad \forall n\in\N,\ \ t\le T_*.
\label{3.45}
\end{equation}
\end{lemma}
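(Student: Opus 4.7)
The bound (\ref{3.45}) will be established by induction on $n$. For the base case, take $u_0$ to be the time-independent extension of the initial datum $u(0)$ (so $u_{0,t}\equiv 0$); then (\ref{3.45}) holds for $n=0$ provided $M \ge c|u(0)|_{2,1}$.

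For the inductive step, suppose the bound holds for $u_n$ on $[0,T_\ast]$. First I would apply Lemma~\ref{l3.1}: the inductive hypothesis gives $\alpha(u_n) \le T_\ast^{1/2} M$, so (\ref{3.12}) yields a bound for $\|\eta_n(t)\|_2$ in terms of $M$, $T_\ast$, $\|\varrho_s\|_{3,\infty,\Omega^{T_\ast}}$ and $\|\eta(0)\|_2$. The time derivative $\eta_{n,t}$ is read off directly from (\ref{3.8}) and estimated in $H^1$ by $\|\eta_n\|_2$, $\|u_n\|_2$ and data. Taken together this gives
$$\bar X_2(\eta_n, u_n) \le M_\ast,$$
where $M_\ast$ depends continuously on $M$, $T_\ast$, data and initial data, and is finite for $T_\ast$ bounded.

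Next I would add the four estimates (\ref{3.17}), (\ref{3.19}), (\ref{3.22}) and (\ref{3.36}). Choosing the parameter $\varepsilon$ in (\ref{3.22}) and in (\ref{3.36}) sufficiently small, the cross terms $\varepsilon\|u_{n+1}\|_{3,2,\Omega^t}^2$ from (\ref{3.36}) and $\varepsilon\|u_{n+1,t}\|_{2,2,\Omega^t}^2$ from (\ref{3.22}) are absorbed by the corresponding norms on the partner inequality's left-hand side. The crucial structural feature of the combined right-hand side is that every term involving the iterates $(\eta_n, u_n)$ carries a positive power of $t$ (an explicit $t$ in (\ref{3.17}), (\ref{3.19}) and a factor $t^a$ in (\ref{3.22}), (\ref{3.36})). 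Consequently, for $t \le T_\ast$,
$$|u_{n+1}(t)|_{2,1}^2 + |u_{n+1}|_{3,2,2,\Omega^t}^2 \le C_1(\mathrm{data}) + T_\ast^a\,\Phi(M_\ast,\mathrm{data}),$$
where $C_1$ collects only data-dependent contributions ($|u(0)|_{2,1}^2$, $\|u_t(0)\|_1^2$, $\|\eta(0)\|_2$, $\|g\|_{1,2,\Omega^{T_\ast}}^2$, $\|f_s\|_{1,\infty,\Omega^{T_\ast}}$, $\bar X_2(\varrho_s,v_s)$, $\|v_{s,tt}\|_{0,\infty,\Omega^{T_\ast}}$) and is independent of $M$ and $T_\ast$, while $\Phi$ is continuous and depends polynomially on $M_\ast$.

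To close the induction I would first fix $M$ so large that $M^2 \ge 4C_1(\mathrm{data})$, and afterwards shrink $T_\ast$ so that $T_\ast^a \Phi(M_\ast(M),\mathrm{data}) \le M^2/2$. Both selections are possible because $C_1$ is bounded independently of $M$ and $T_\ast$, and because $T_\ast^a \to 0$ as $T_\ast \to 0$ with $M$ already fixed. The main technical obstacle is precisely the simultaneous absorption of the circular $\varepsilon$-terms of Lemmas~\ref{l3.4.} and \ref{l3.5}: each of those inequalities contains on its right-hand side the partner lemma's principal norm, so they cannot be absorbed separately and must be combined before the small-$\varepsilon$ argument. Once this is carried out, the remaining steps -- controlling $\bar X_2(\eta_n, u_n)$ via Lemma~\ref{l3.1} and exploiting smallness of $T_\ast^a$ -- are routine and the induction closes.
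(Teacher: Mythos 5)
Your proposal follows the paper's own strategy essentially verbatim: take $u_0$ as an extension of the initial datum, combine (\ref{3.22}) and (\ref{3.36}) with a small $\varepsilon$ to absorb the circular cross terms and arrive at (\ref{3.46}), control $\bar X_2(\eta_n,u_n)$ via (\ref{3.12}) and (\ref{3.47}), and then close the induction by exploiting the positive powers of $t$ attached to all iterate-dependent terms (first choosing $M$ large relative to the $t$-independent data contribution, then shrinking $T_*$). This matches the paper's proof of Lemma~\ref{l3.6} step by step, so the argument is sound.
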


\begin{proof}
From (\ref{3.22}) and (\ref{3.36}) and for $\varepsilon$ sufficiently small
we derive the inequality
\begin{equation}
\eqal{
&|u_{n+1}(t)|_{2,1}^2+|u_{n+1}|_{3,2,2,\Omega^t}^2\le\varphi(t^a\bar X_2
(\eta_n,u_n),\bar X_2(\varrho_s,v_s),\cr
&|f_s|_{1,0,\infty,\Omega^t},|v_s|_{2,0,\infty,\Omega^t})[t^a\bar X_2^2
(\eta_n,u_n)+|g|_{1,0,2,\Omega^T}^2+|u(0)|_{2,1}^2],\cr}
\label{3.46}
\end{equation}
where $a>0$.

\noindent
From (\ref{3.8}) we have
\begin{equation}
\|\eta_{n,t}\|_1\le c(\|v_s\|_2\|\eta_n\|_2+\|\varrho_s\|_2\|u_n\|_2+
\|\eta_n\|_2\|u_n\|_2).
\label{3.47}
\end{equation}
Employing (\ref{3.47}) in (\ref{3.46}) and using (\ref{3.12}) in the result we
obtain the inequality
\begin{equation}
\eqal{
&|u_{n+1}(t)|_{2,1}^2+|u_{n+1}|_{3,2,2,\Omega^t}^2\le\varphi(\alpha(u_n),t^a
|u_n|_{2,1,\infty,\Omega^t},\alpha(v_s),\cr
&\bar X_2(\varrho_s,v_s),|f_s|_{1,0,\infty,\Omega^t},
|v_s|_{2,0,\infty,\Omega^t})[\|\varrho_s\|_{3,\infty,\Omega^t}\alpha(u_n)\cr
&\quad+\|\eta(0)\|_2^2+|g|_{1,0,2,\Omega^t}^2+|u(0)|_{2,1}^2].\cr}
\label{3.48}
\end{equation}
We assume that approximation $u_0$ is constructed by an extension of the
initial data $u(0)$. Assume that (\ref{3.45}) holds. Then (\ref{3.48}) implies
$$\eqal{
&|u_{n+1}(t)|_{2,1}^2+|u_{n+1}|_{3,2,2,\Omega^t}^2\le\varphi(t^{1/2}M,t^aM,
\alpha(v_s),\bar X_2(\varrho_s,v_s),\cr
&|f_s|_{1,0,\infty,\Omega^t},|v_s|_{2,0,\infty,\Omega^t})
[\|\varrho_s\|_{3,\infty,\Omega^t}t^{1/2}M+|g|_{1,0,2,\Omega^t}^2\cr
&\quad+\|\eta(0)\|_2^2+|u(0)|_{2,1}^2].\cr}
$$
If
$$\eqal{
M^2&>\varphi(0,0,0,\bar X_2(\varrho_s,v_s),|f_s|_{1,0,\infty,\Omega^t},
|v_s|_{2,0,\infty,\Omega^t})[|g|_{1,0,2,\Omega^t}^2\cr
&\quad+\|\eta(0)\|_2^2+|u(0)|_{2,1}^2]\cr}
$$
we obtain for $t$ sufficiently small that
$$
|u_{n+1}(t)|_{2,1}^2+|u_{n+1}|_{3,2,2,\Omega^t}^2\le M^2,
$$
because $\varphi$ is an increasing positive continuous function. This implies
(\ref{3.45}) for all $n$ and concludes the proof.
\end{proof}

\noindent
To prove convergence of the considered sequence we introduce the differences
\begin{equation}
U_n=u_n-u_{n-1},\quad E_n=\eta_n-\eta_{n-1},\quad Q_n=q_n-q_{n-1},
\label{3.49}
\end{equation}
which are solutions to the problems
\begin{equation}
\eqal{
&\varrho_nU_{n+1,t}-\mu\Delta U_{n+1}-\nu\nabla\divv U_{n+1}=-E_nu_{n,t}\cr
&\quad+E_nf_s-\nabla Q_n-[E_nv_{s,t}+E_nv_s\cdot\nabla v_s+E_nv_n\cdot
\nabla v_s\cr
&\quad+\eta_{n-1}U_n\cdot\nabla v_s+\varrho_sU_n\cdot\nabla v_s]-
[E_n(v_s+u_n)\cdot\nabla u_n\cr
&\quad+\eta_{n-1}v_s\cdot\nabla U_n+\eta_{n-1}(U_n\cdot\nabla u_n+u_{n-1}\cdot\nabla U_n)],\cr}
\label{3.50}
\end{equation}
\begin{equation}
\eqal{
&E_{n,t}+v_s\cdot\nabla E_n=-E_n\divv v_s-E_n\divv u_n-\eta_{n-1}\divv U_n\cr
&\quad-\varrho_s\divv U_n-U_n\cdot\nabla\varrho_s,\cr}
\label{3.51}
\end{equation}
and
$$
U_n|_S=0,\quad U_n|_{t=0}=0,\quad E_n|_{t=0}=0,\quad n>0.
$$
By $U_0$, $E_0$ we have extensions of the initial data.

\begin{lemma}\label{l3.7}
Let the assumptions of Lemma \ref{l3.1} hold\\
Then
\begin{equation}
\|U_{n+1}\|_{0,\infty,\Omega^t}+\|U_{n+1}\|_{1,2,\Omega^t}\le\varphi(M)t^a
(\|U_n\|_{0,\infty,\Omega^t}+\|U_n\|_{1,2,\Omega^t}),
\label{3.52}
\end{equation}
where $a>0$.
\end{lemma}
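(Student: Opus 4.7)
\textbf{Plan for Lemma \ref{l3.7}.} The strategy is the standard contraction argument for successive approximations: perform energy estimates on the difference system \eqref{3.50}--\eqref{3.51}, bound the right-hand sides using the uniform bound $M$ from Lemma \ref{l3.6}, and exploit the fact that $U_{n+1}$, $E_n$, $Q_n$ vanish at $t=0$ so that integrating in time produces a small prefactor $t^a$.

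First I would handle the density and pressure differences. Multiply \eqref{3.51} by $E_n$ and integrate over $\Omega$. The transport term $v_s\cdot\nabla E_n$ contributes only $\tfrac12\int(\divv v_s)E_n^2\,dx$ after integration by parts (since $v_s|_S=0$), and the remaining source terms are products of $U_n$ or $E_n$ with functions bounded in $H^2$ by $M$ (via Lemma \ref{l3.6}, Lemma \ref{l3.1}, and Remark \ref{r1.1}). Gronwall in $t$ yields
$$
\|E_n(t)\|_0^2 \le \varphi(M)\intop_0^t\|U_n(\tau)\|_1^2\,d\tau \le \varphi(M)\,t\,\|U_n\|_{1,2,\Omega^t}^2,
$$
since $E_n(0)=0$ for $n\ge 1$. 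Writing the analogue of \eqref{3.7} at levels $n$ and $n-1$ and subtracting gives a transport equation for $Q_n$ with source terms bounded by $\varphi(M)(\|U_n\|_1+\|E_n\|_0)$; the same energy method, together with \eqref{2.4}, produces the corresponding bound $\|Q_n(t)\|_0 \le \varphi(M)\,t^{1/2}(\|U_n\|_{0,\infty,\Omega^t}+\|U_n\|_{1,2,\Omega^t})$.

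Next I would derive the velocity estimate by multiplying \eqref{3.50} by $U_{n+1}$ and integrating over $\Omega$. The left-hand side gives $\tfrac12\tfrac{d}{dt}\int\varrho_n U_{n+1}^2\,dx$ plus an error $\tfrac12\int\varrho_{n,t}U_{n+1}^2\,dx$ which, exactly as in Lemma \ref{l3.2}, is absorbed using \eqref{2.16} and the $H^1$-bound on $\varrho_{n,t}$ coming from \eqref{3.8}. The elliptic part produces $\|U_{n+1}\|_1^2$ via Remark \ref{r2.1} and the Dirichlet condition. The delicate $\nabla Q_n$ term is integrated by parts,
$$
-\intop_\Omega \nabla Q_n\cdot U_{n+1}\,dx = \intop_\Omega Q_n\,\divv U_{n+1}\,dx \le \varepsilon\|U_{n+1}\|_1^2 + \frac{c}{\varepsilon}\|Q_n\|_0^2,
$$
and all remaining source terms are products of $E_n$ or $U_n$ with factors bounded by $M$ (including $u_{n,t}$, whose $L_2$-norm is controlled by Lemma \ref{l3.3}), hence estimated by $\varphi(M)(\|E_n\|_0^2+\|U_n\|_1^2)$.

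Finally I would integrate in $t$, use $U_{n+1}(0)=0$ together with the lower bound \eqref{3.13} to control $\|U_{n+1}(t)\|_0^2$ on the left, and substitute the first-step bounds on $\|E_n\|_0$ and $\|Q_n\|_0$. This yields
$$
\|U_{n+1}(t)\|_0^2 + \|U_{n+1}\|_{1,2,\Omega^t}^2 \le \varphi(M)\,t^a\bigl(\|U_n\|_{0,\infty,\Omega^t}^2+\|U_n\|_{1,2,\Omega^t}^2\bigr),
$$
from which \eqref{3.52} follows after taking square roots. The main obstacle is the pressure-gradient term $\nabla Q_n$: naively it would require control of $\nabla E_n$ in $L_2$, which is not available at this regularity level for the differences; the essential point is to integrate by parts against $U_{n+1}$, using the Dirichlet boundary condition, so that only $\|Q_n\|_0$ (equivalently $\|E_n\|_0$ via \eqref{2.4}) is needed, and this is precisely what the plain $L_2$ energy method on \eqref{3.51} delivers with the gain of a factor $t^{1/2}$.
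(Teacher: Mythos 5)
Your plan coincides with the paper's proof: an $L_2$ energy estimate for $E_n$ from \eqref{3.51}, an $L_2$ estimate for $U_{n+1}$ from \eqref{3.50} in which $\nabla Q_n$ is integrated by parts against $U_{n+1}$ (the Dirichlet condition kills the boundary term and $\|Q_n\|_0\le c\|E_n\|_0$ follows from the mean-value relation behind \eqref{2.4}), and finally time integration using $E_n(0)=U_{n+1}(0)=0$ to produce the factor $t^a$. The separate transport equation for $Q_n$ is redundant but harmless, since $Q_n=A(\varrho_n^\varkappa-\varrho_{n-1}^\varkappa)$ is algebraically proportional to $E_n$ with a factor bounded via \eqref{3.13}. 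One step to tidy: the chain $\int_0^t\|U_n(\tau)\|_1^2\,d\tau\le t\,\|U_n\|_{1,2,\Omega^t}^2$ is false for $t<1$, because by definition $\int_0^t\|U_n(\tau)\|_1^2\,d\tau=\|U_n\|_{1,2,\Omega^t}^2$; the paper obtains its $t^{1/2}$ by working with $\tfrac{d}{dt}\|E_n\|_0$ rather than its square, so the source $\|U_n\|_1$ enters linearly and Cauchy--Schwarz in time gives $\int_0^t\|U_n\|_1\,d\tau\le t^{1/2}\|U_n\|_{1,2,\Omega^t}$. Alternatively, note that the factor $t$ for \eqref{3.52} is produced anyway at the last step, from $\int_0^t\|E_n(\tau)\|_0^2\,d\tau\le t\sup_{\tau\le t}\|E_n(\tau)\|_0^2$ and $\int_0^t\|U_n(\tau)\|_0^2\,d\tau\le t\,\|U_n\|_{0,\infty,\Omega^t}^2$, so a $t$-uniform bound on $\|E_n(\tau)\|_0$ suffices.
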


\begin{proof}
Multiplying (\ref{3.50}) by $U_{n+1}$, integrating the result over $\Omega$
and using boundary conditions we have
\begin{equation}
\eqal{
&{1\over2}\intop_\Omega\varrho_n\partial_tU_{n+1}^2dx+\|U_{n+1}\|_1^2\le c
(|f_s|^2+1+\varphi(|u_n|_{2,1},|v_s|_{2,1}))|E_n|_2^2\cr
&\quad+c\|\eta_{n-1}\|_2^2(\|v_s\|_2^2+\|u_n\|_2^2)|U_n|_2^2\le\varphi(M)
(|E_n|_2^2+|U_n|_2^2),\cr}
\label{3.53}
\end{equation}
where we used assumptions of Lemma \ref{l3.6}. Continuing, we have
\begin{equation}
\eqal{
&{d\over dt}\intop_\Omega\varrho_nU_{n+1}^2dx+\|U_{n+1}\|_1^2\le c
(|\varrho_{s,t}|_3^2+|\eta_{n,t}|_3^2)|U_{n+1}|_2^2\cr
&\quad+\varphi(M)(|E_n|_2^2+|U_n|_2^2).\cr}
\label{3.54}
\end{equation}
Multiplying (\ref{3.51}) by $E_n$ and integrating over $\Omega$ we get
$$
{d\over dt}|E_n|_2\le c(\|v_s\|_3+\|u_n\|_3)|E_n|_2+c(\|\eta_{n-1}\|_2+\|\varrho_s\|_2)\|U_n\|_1.
$$
Integrating the above inequality with respect to time yields
\begin{equation}
\eqal{
|E_n(t)|_2&\le\exp[ct^{1/2}(\|v_s\|_{3,2,\Omega^t}+\|u_n\|_{3,2,\Omega^t})]
\cdot\cr
&\quad\cdot c(\|\eta_{n-1}\|_{2,\infty,\Omega^t}+
\|\varrho_s\|_{2,\infty,\Omega^t})t^{1/2}\|U_n\|_{1,2,\Omega^t}.\cr}
\label{3.55}
\end{equation}
Using (\ref{3.55}) in (\ref{3.54}) and integraing the result with respect to
time we get (\ref{3.52}). This concludes the proof.
\end{proof}

\noindent
From Lemma \ref{l3.1}, \ref{l3.2} we have

\begin{theorem}\label{t3.3}
Let the assumptions of Lemma \ref{l3.1} hold. Let $\sigma(0)$ be so small that
(\ref{3.13}) is satisfied. Then for sufficiently small time $T$ there exists
a solution to problem (\ref{3.1})--(\ref{3.5}) such that
$$\eqal{
&u\in L_\infty(0,T;\Gamma_1^2(\Omega))\cap L_2(0,T;\Gamma_2^3(\Omega))\equiv A(\Omega^T),\cr
&\eta\in L_\infty(0,T;\Gamma_1^2(\Omega))\equiv B(\Omega^T)\quad \textsl{and}\cr
&\|u\|_{A(\Omega^T)}+\|\eta\|_{B(\Omega^T)}\le M,\cr}
$$
where $M$ is introduced in Lemma \ref{l3.1}.
\end{theorem}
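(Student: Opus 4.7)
My plan is to pass to the limit in the successive approximation scheme (\ref{3.6})--(\ref{3.10}) by combining the uniform bounds of Lemma \ref{l3.6} with the contraction estimate of Lemma \ref{l3.7}. Lemma \ref{l3.6} supplies a bound $M$ and a time $T_*>0$ such that $|u_n(t)|_{2,1}+|u_n|_{3,2,2,\Omega^t}\le M$ for every $n\in\N$ and every $t\le T_*$; Lemma \ref{l3.1} then yields the corresponding uniform bound on $\|\eta_n\|_2$ and preserves the density bounds (\ref{3.13}), and the relation (\ref{1.13}) transfers these bounds to $q_n$.

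Next I would choose $T\le T_*$ so small that the contraction factor $\varphi(M)T^a$ appearing in Lemma \ref{l3.7} is at most $1/2$. The lemma then shows that $\{u_n\}$ is Cauchy in the norm $\|\cdot\|_{0,\infty,\Omega^T}+\|\cdot\|_{1,2,\Omega^T}$, while the auxiliary inequality (\ref{3.55}) shows $\{\eta_n\}$ is Cauchy in $L_\infty(0,T;L_2(\Omega))$, with $\{q_n\}$ inheriting this through (\ref{1.13}). Denote the strong limits by $u$, $\eta$, $q$.

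The uniform bounds of Lemma \ref{l3.6} furnish, along a subsequence, weak-$*$ convergence of $u_n$ to $u$ in $L_\infty(0,T;\Gamma_1^2(\Omega))$ and weak convergence in $L_2(0,T;\Gamma_2^3(\Omega))$, together with analogous weak-$*$ convergence of $\eta_n$ and $q_n$ in $L_\infty(0,T;\Gamma_1^2(\Omega))$. Combining strong $L_\infty(0,T;L_2(\Omega))$--convergence of $\eta_n$ with weak boundedness in $L_\infty(0,T;H^2(\Omega))$, and the strong $L_2(0,T;H^1(\Omega))$--convergence of $u_n$ with weak boundedness in $L_2(0,T;H^3(\Omega))$, I interpolate to obtain strong convergence in intermediate Sobolev norms; this suffices to pass to the limit in each nonlinear product on the right-hand side of (\ref{3.6})--(\ref{3.8}). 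The initial and boundary conditions pass to the limit by continuity of traces, and lower semicontinuity of norms then delivers $\|u\|_{A(\Omega^T)}+\|\eta\|_{B(\Omega^T)}\le M$.

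The main technical obstacle will be handling the products that involve time derivatives, in particular the term $\eta_n u_{n,t}$ hidden inside the reformulation (\ref{3.5})--(\ref{3.6}) and the factor $q_n\,\divv(v_s+u_n)$ in (\ref{3.7}); here strong convergence of $\eta_n$ in $L_\infty(0,T;L_2(\Omega))$ together with uniform boundedness of $u_{n,t}$ in $L_\infty(0,T;L_2(\Omega))\cap L_2(0,T;H^1(\Omega))$ (supplied by Lemmas \ref{l3.3} and \ref{l3.4.}) identifies the weak limit of the product in $L_2(\Omega^T)$, which is enough for the distributional formulation of (\ref{3.1})--(\ref{3.5}). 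Uniqueness within the class of solutions of norm at most $M$ then follows by applying the contraction estimate of Lemma \ref{l3.7} to the difference of two such solutions; this also upgrades subsequential convergence to convergence of the full sequence.
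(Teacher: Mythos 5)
Your proposal is correct and follows the route the paper itself intends: the paper states Theorem \ref{t3.3} immediately after the lemmas with only the cryptic remark ``From Lemma \ref{l3.1}, \ref{l3.2} we have'' and gives no written proof, so the intended argument is precisely the one you spell out — uniform boundedness from Lemma \ref{l3.6}, contraction from Lemma \ref{l3.7} (with (\ref{3.55}) for $\eta_n$), weak-$*$ compactness and interpolation to pass to the limit in the nonlinear terms, and lower semicontinuity for the final bound by $M$. You have in fact cited the lemmas that actually do the work (\ref{l3.6}, \ref{l3.7}) more accurately than the paper's own pointer to Lemmas \ref{l3.1} and \ref{l3.2}.
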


\section{Differential inequality}\label{s4}
\setcounter{equation}{0}

To prove the differential inequality we need existence of sufficiently regular
local solution. Moreover, we need relations (\ref{1.14}). However, they hold
for the local solution, the differential inequality shows that they remain to
hold for all time.

We remark that in this section we shall indicate any equivalent norm in
$H^s(\Omega)$ with the same symbol $\|\cdot\|_s$. However constants in this
section depend on constants from (\ref{1.14}), we do not mark it.

First, applying global domain considerations, we show

\begin{lemma}\label{l4.1}
For sufficiently regular solutions to (\ref{1.5}), (\ref{1.10})--(\ref{1.12})
it holds
\begin{equation}
\eqal{
&{d\over dt}\bigg\|\sqrt{\varrho_s}u,\sqrt{\varrho}u_t,{1\over\sqrt{p_s}}q,
{1\over\sqrt{p_s}}q_t\bigg\|_0^2+\|u,u_t\|_1^2+\|q,q_t\|_0^2\cr
&\le c\varphi_1\|f_s\|_1^2+cA_1\|g\|_0^2+cA_1(1+A_1^2)\varphi_1+cA_1\varphi_1^2
+c(1+\varphi_1)\varphi_1^2\equiv cX_0^2,\cr}
\label{4.1}
\end{equation}
where
\begin{equation}
\varphi_1(t)=|u(t)|_{2,1}^2+|q(t)|_{2,1}^2,\quad
A_1(t)=|\varrho_s(t)|_{2,1}^2+|v_s(t)|_{2,1}^2.
\label{4.2}
\end{equation}
\end{lemma}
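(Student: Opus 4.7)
The plan is to derive two coupled energy identities for $(u,q)$ and $(u_t,q_t)$, add them, and absorb everything that is not of the advertised form into the right-hand side via H\"older, Sobolev and Young inequalities. The whole argument is carried out in the full domain $\Omega$, since only the first derivative of $u$ is needed in the dissipation.

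First I would test the momentum equation $(\ref{1.5})$ by $u$ and the pressure equation $(\ref{1.10})$ by $q$ and integrate over $\Omega$. For the convective block $\varrho_s(u_t+v_s\cdot\nabla u)\cdot u$ I would use the continuity equation $(\ref{1.3})_2$ written in the form $\varrho_{s,t}+\divv(\varrho_sv_s)=0$: after integration by parts (using $v_s|_S=0$) it produces the clean time derivative $\tfrac12 \tfrac{d}{dt}\intop_\Omega\varrho_s|u|^2\,dx$. For the analogous block in the $q$-equation I would use identity $(\ref{2.2})$ for $1/p_s$ to obtain $\tfrac12\tfrac{d}{dt}\intop_\Omega q^2/(\varkappa p_s)\,dx$, up to the lower-order contribution $\tfrac{\varkappa+1}{2\varkappa}\intop_\Omega \divv v_s\cdot q^2/p_s\,dx$, which is bounded by $cA_1\varphi_1$. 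The elliptic term furnishes the coercive $\intop_\Omega(\mu|\nabla u|^2+\nu|\divv u|^2)\,dx$ via Remark \ref{r2.1}, and the two coupling terms $\intop_\Omega\nabla q\cdot u$ and $\intop_\Omega q\divv u$ cancel after integration by parts since $u|_S=0$.

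Next I would repeat the same procedure after differentiating both equations in time and testing with $u_t$ and $q_t$. The structure is identical, with $\varrho_s$ replaced by $\varrho$ in the kinetic weight (via the identity $\partial_t\intop\varrho u_t^2=\intop\varrho_t u_t^2+2\intop\varrho u_t\cdot u_{tt}$ combined with the continuity equation for $\varrho$), which explains the weight $\sqrt{\varrho}$ appearing in front of $u_t$ on the left-hand side. All commutator terms generated by $\varrho_{s,t}$, $\varrho_t$, $(1/p_s)_t$ and $v_{s,t}$ are products of one factor bounded by $A_1$ and one factor from $\varphi_1$; they produce the $cA_1(1+A_1^2)\varphi_1$-type contributions.

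Finally I would estimate the right-hand sides $\intop\bar f\cdot u$, $\intop\bar h\, q$, $\intop\bar f_t\cdot u_t$, $\intop\bar h_t\, q_t$ term by term using the explicit form of $\bar f,\bar h$ in $(\ref{1.5})$, $(\ref{1.10})$ together with $(\ref{2.4})$ to trade $\eta$ for $q$. Linear forcing terms such as $\intop\eta f_s\cdot u$ give $c\varphi_1\|f_s\|_1^2$ and $\intop\varrho_s g\cdot u$ gives $cA_1\|g\|_0^2$; quadratic special-solution terms like $\eta(v_s+u)\cdot\nabla v_s$ and $\varrho_s u\cdot\nabla v_s$ yield $cA_1(1+A_1^2)\varphi_1$; the genuinely nonlinear terms $\eta u_t$, $\eta u\cdot\nabla u$, $q\divv u$ and their time derivatives yield the quadratic and cubic contributions $cA_1\varphi_1^2$ and $c(1+\varphi_1)\varphi_1^2$. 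All $\|u\|_1^2$, $\|u_t\|_1^2$ factors produced by Young's inequality are absorbed into the dissipation on the left.

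The main obstacle is not a single sharp inequality but the bookkeeping: the time-differentiated equations generate many products of the factors $\varrho_{s,t}$, $v_{s,t}$, $v_{s,tt}$, $\eta_t$, $u_t$, $q_t$, and one must verify that each is controlled by a product of $\|f_s\|_1$, $\|g\|_0$, $A_1$ and $\varphi_1$ fitting into the template of $X_0^2$. The second delicate point is the $\|q,q_t\|_0^2$ on the left: since $(\ref{1.10})$ is transport-type in $q$ with no dissipative term, this piece cannot arise from a dissipation identity; instead it has to be majorized by the weighted norms already present in the time-derivative on the left through the uniform bounds $(\ref{1.14})$ on $p_s$, which is why the weights $1/\sqrt{p_s}$ are essential.
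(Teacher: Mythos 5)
Your plan correctly reproduces the core energy cancellations: testing the two equations by $u$ and $q$, using the continuity identities $(\ref{1.3})_2$ and $(\ref{2.2})$ to produce the weighted time derivatives, the elliptic operator giving $\|u\|_1^2$ via Remark \ref{r2.1}, and the cancellation of $\int\nabla q\cdot u$ against $\int q\,\divv u$. The $t$-differentiated version with $u_t,q_t$ and the absorption of $-\eta u_t$ from $\bar f$ into the time derivative (turning $\sqrt{\varrho_s}u_t$ into $\sqrt{\varrho}u_t$) also match the paper. But there is a genuine gap in how you claim to place $\|q,q_t\|_0^2$ on the left-hand side.

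Your final paragraph says the $\|q,q_t\|_0^2$ block ``has to be majorized by the weighted norms already present in the time-derivative on the left through the uniform bounds $(\ref{1.14})$ on $p_s$.'' This does not produce a positive, standalone term on the left. Knowing $\frac{d}{dt}\big\|\frac{1}{\sqrt{p_s}}q\big\|_0^2$ sits under a time derivative gives no control on the non-differentiated $\|q\|_0^2$; one cannot add $\|q\|_0^2$ to the dissipation side by invoking bounds on $p_s$. The paper obtains these terms by two specific additional steps that your outline omits. For $\|q_t\|_0^2$: solve $(\ref{1.10})$ pointwise for $q_t$ and take the $L_2$-norm, which yields $\|q_t\|_0^2\le c(\|v_s\cdot\nabla q\|_0^2+\|\divv u\|_0^2+\|\bar h\|_0^2)$ (this is $(\ref{4.5})$), then add a small multiple of this to the energy inequality so that $\|\divv u\|_0^2$ is absorbed into $\|u\|_1^2$. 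For $\|q\|_0^2$: rewrite $(\ref{1.5})$ as a Stokes system $(\ref{4.7})$, construct a Bogovski\u{\i}-type vector field $\varphi$ with $\divv\varphi=q$, $\varphi|_S=0$, $\|\varphi\|_1\le c\|q\|_0$ (reference \cite{KP}), and test $(\ref{4.7})_1$ by $\varphi$; the pressure term produces $-\|q\|_0^2$ while everything else is controlled by $\|u\|_1^2$ and weak ($L_{6/5}$) norms of $\bar f$, $u_t$, $v_s\cdot\nabla u$, giving $(\ref{4.10})$. Without this Bogovski\u{\i} step the coercive $\|q\|_0^2$ on the left cannot be produced. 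Apart from this, the bookkeeping you describe for the right-hand side is consistent with the paper.
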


\begin{proof}
Multiplying (\ref{1.5}) by $u$, (\ref{1.10}) by $q$, integrating over $\Omega$,
adding and using $(\ref{1.3})_2$ and (\ref{2.2}), respectively, we get
\begin{equation}
\eqal{
&{d\over dt}\intop_\Omega\bigg(\varrho_su^2+{1\over\varkappa p_s}q^2\bigg)dx+
\|u\|_1^2\le c\bigg|\intop_\Omega\bar f\cdot udx\bigg|+c
\bigg|\intop_\Omega\bar hqdx\bigg|\cr
&\quad+c\intop_\Omega|\divv v_s|q^2dx.\cr}
\label{4.3}
\end{equation}
Differentiating (\ref{1.5}) and (\ref{1.10}) with respect to time, multiplying
the results by $u_t$ and $q_t$, respectively, adding, integrating over
$\Omega$ with using $(\ref{1.3})_2$ and (\ref{2.2}), respectively, we obtain
\begin{equation}
\eqal{
&{d\over dt}\bigg\|\sqrt{\varrho_s}u_t,{1\over\sqrt{p_s}}q\bigg\|_0^2+
\|u_t\|_0^2\le c\bigg|\intop_\Omega(\varrho_{st}u_t^2+(\varrho_sv_s)_{,t}\cdot
\nabla uu_t)dx\bigg|\cr
&\quad+c\bigg|\intop_\Omega\bigg[\bigg({1\over p_s}\bigg)_{,t}q_t^2+
\bigg({v_s\over p_s}\bigg)_{,t}\cdot\nabla qq_t\bigg]dx\bigg|+c
\bigg|\intop_\Omega\divv v_sq_t^2dx\bigg|\cr
&\quad+c\bigg|\intop_\Omega\bar f_t\cdot
u_tdx\bigg|+c\bigg|\intop_\Omega\bar h_t\cdot q_tdx\bigg|\equiv cI_1.\cr}
\label{4.4}
\end{equation}
From (\ref{1.10}) we have
\begin{equation}
\|q_t\|_0^2\le c(\|v_s\cdot\nabla q\|_0^2+\|\divv u\|_0^2+\|\bar h\|_0^2).
\label{4.5}
\end{equation}
Adding (\ref{4.3}) and (\ref{4.5}) appropriately, we get
\begin{equation}
\eqal{
&{d\over dt}\bigg\|\sqrt{\varrho_s}u,{1\over\sqrt{p_s}}q\bigg\|_0^2+\|u\|_1^2+
\|q_t\|_0^2\le c\bigg|\intop_\Omega\bar f\cdot udx\bigg|+c\bigg|\intop_\Omega
\bar h\cdot qdx\bigg|\cr
&\quad+c\bigg|\intop_\Omega|\divv v_s|q^2dx\bigg|+c\|v_s\cdot\nabla q\|_0^2+
c\|\bar h\|_0^2\equiv cI_2.\cr}
\label{4.6}
\end{equation}
Let us consider the Stokes system
\begin{equation}
\eqal{
&-\mu\Delta u+\nabla q=\bar f+\nu\nabla\divv u-\varrho_s(u_t+v_s\cdot\nabla u),\cr
&\divv u=\divv u,\cr
&u|_S=0.\cr}
\label{4.7}
\end{equation}
Let us introduce a~function $\varphi$ such that
\begin{equation}
\divv\varphi=q,\quad \varphi|_S=0.
\label{4.8}
\end{equation}
Following \cite{KP}, there exists a~solution to (\ref{4.8}) such that
$\varphi\in H^1$ and
\begin{equation}
\|\varphi\|_1\le c\|q\|_0.
\label{4.9}
\end{equation}
Multiplying $(\ref{4.7})_1$ by $\varphi$, integrating over $\Omega$ and using
(\ref{4.9}) yields
\begin{equation}
\|q\|_0^2\le c(\|u\|_1^2+|\bar f|_{6/5}^2+|u_t|_{6/5}^2+|v_s\cdot\nabla u|_{6/5}^2),
\label{4.10}
\end{equation}
where we used that $\varrho_*\le\varrho_s\le\varrho^*$. Inequalities (\ref{4.4}) and (\ref{4.6}) imply
\begin{equation}
{d\over dt}\bigg\|\sqrt{\varrho_s}u,\sqrt{\varrho_s}u_t,{1\over\sqrt{p_s}}q,
{1\over\sqrt{p_s}}q_t\bigg\|_0^2+\|u,u_t\|_1^2+\|q_t\|_0^2\le c(I_1+I_2).
\label{4.11}
\end{equation}
Now, adding appropriately (\ref{4.10}) and (\ref{4.11}) gives
\begin{equation}
\eqal{
&{d\over dt}\bigg\|\sqrt{\varrho_s}u,\sqrt{\varrho_s}u_t,{1\over\sqrt{p_s}}q,
{1\over\sqrt{p_s}}q_t\bigg\|_0^2+\|u,u_t\|_1^2+\|q,q_t\|_0^2\le c(I_1+I_2\cr
&\quad+|\bar f|_{6/5}^2+|v_s\cdot\nabla u|_{6/5}^2).\cr}
\label{4.12}
\end{equation}
Now, we calculate
$$\eqal{
I_1&\le\varepsilon(\|u_t\|_1^2+\|q_t\|_0^2)+c/\varepsilon[|\varrho_s|_{2,1}^2+
|v_s|_{2,1}^2+|\varrho_s|_{2,1}^2|v_s|_{2,1}^2\cr
&\quad+|v_s|_{2,1}^2
|\varrho_s|_{2,1}^4](|u|_{2,1}^2+|q|_{2,1}^2)+c/\varepsilon(|\bar f_t|_{6/5}^2+\|\bar h_t\|_0^2),\cr
I_2&\le\varepsilon(\|u\|_1^2+\|q\|_0^2)+c/\varepsilon(\|v_s\|_2^2\|q\|_1^2+
|\bar f|_{6/5}^2+\|\bar h\|_0^2).\cr}
$$
Emploing the above estimates in (\ref{4.12}) yields
\begin{equation}
\eqal{
&{d\over dt}\bigg\|\sqrt{\varrho_s}u,\sqrt{\varrho_s}u_t,{1\over\sqrt{p_s}}q,
{1\over\sqrt{p_s}}q_t\bigg\|_0^2+\|u,u_t\|_1^2+\|q,q_t\|_0^2\cr
&\le cA_1(1+A_1)\varphi_1+c(|\bar f_1|_{6/5}^2+\|\bar h_t\|_0^2+\|\bar h\|_0^2)
+\intop_\Omega\bar f_{2t}\cdot u_tdx.\cr}
\label{4.13}
\end{equation}
Now estimate the last four terms from the r.h.s. of (\ref{4.13}). Using that
$\bar f=\bar f_1+\bar f_2$, $\bar f_2=-\eta u_t$ (see (\ref{4.5})) we have
$$
|\bar f_1|_{6/5}^2\le|\eta|_2^2\|f_s\|_1^2+\|\varrho_s\|_1^2\|g\|_0^2+cA_1
(1+A_1)\varphi_1+cA_1\varphi_1^2+c\varphi_1^3.
$$
Next
$$
\intop_\Omega\bar f_{2t}\cdot u_tdx=-\intop_\Omega(\eta u_t)_tu_tdx=-{1\over2}
{d\over dt}\intop_\Omega\eta u_t^2dx-{1\over2}\intop_\Omega\eta_t u_t^2dx,
$$
where the second integral is estimated by
$$
\bigg|\intop_\Omega\eta_tu_t^2dx\bigg|\le\varepsilon\|u_t\|_1^2+c/\varepsilon
|\eta_t|_2^2\|u_t\|_1^2\le\varepsilon\|u_t\\_1^2+c/\varepsilon\varphi_1^2.
$$
Finally, we have
$$
\|\bar h_t\|_0^2\le c(A_1\varphi_1+\varphi_1^2)
$$
and
$$
\|\bar h\|_0^2\le c(A_1\varphi_1+\varphi_1^2).
$$
Using the above estimates in (\ref{4.13}) and assuming that $\varepsilon$ is
sufficiently small we derive (\ref{4.1}). This concludes the proof.
\end{proof}

Next we obtain an estimate for the second spatial derivatives of $u$ and the
first of $q$.

\begin{lemma}\label{l4.2}
For sufficiently smooth solutions we have
\begin{equation}
{d\over dt}\bigg\|\sqrt{\varrho_s}u,\sqrt{\varrho}u_t,{1\over\sqrt{p_s}}q,
{1\over\sqrt{p_s}}q_t,\sqrt{\varrho_s}u_x,{1\over\sqrt{p_s}}q_x\bigg\|_0^2+
|u|_{2,1}^2+\|q,q_t\|_1^2\le cX_2^2,
\label{4.14}
\end{equation}
where
\begin{equation}
\eqal{
&X_1^2+|\eta|_{1,0}^2|f_s|_{1,0}^2+|\varrho_s|_{1,0}^2|g|_{1,0}^2+
\|\eta\|_1^2\|v_{stt}\|_0^2\cr
&\le c\varphi_1|f_s|_{1,0}^2+cA_1|g|_{1,0}^2+\|\eta\|_1^2\|v_{stt}\|_0^2+
cA_1(1+A_1)\varphi_1+(1+A_1+\varphi_1)\varphi_1^2\cr
&\equiv cX_2,\cr}
\label{4.15}
\end{equation}
and $X_1$ is introduced in (\ref{4.36}).
\end{lemma}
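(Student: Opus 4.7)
The plan is to bootstrap the energy identity of Lemma \ref{l4.1} by one spatial derivative. First I would apply a spatial derivative $\partial_{x_k}$ to both equations (\ref{1.5}) and (\ref{1.10}), multiply by $\partial_{x_k} u$ and $\partial_{x_k} q/(\varkappa p_s)$ respectively, and integrate over $\Omega$. Summing the two identities produces a differential inequality for $\bigl\|\sqrt{\varrho_s}\,u_x,\,\tfrac{1}{\sqrt{p_s}}q_x\bigr\|_0^2$ with a principal part comparable to $\mu\|\nabla u_x\|_0^2+\nu\|\divv u_x\|_0^2$ on the left. Adding this to (\ref{4.1}) will supply the enlarged energy functional on the left-hand side of (\ref{4.14}).

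Since $u_x$ does not vanish on $S$, the integration by parts generates boundary contributions which I would handle by the partition of unity $(\{\tilde\Omega_i\},\{\zeta_i\})$ and the boundary straightening of Section \ref{s2}. Near the boundary I would work with the localized system (\ref{2.5})--(\ref{2.6}), differentiate only in tangential directions $\tau=(z_1,z_2)$ (so that $\tilde u_\tau$ still vanishes on $\hat S$), multiply by $\tilde u_\tau$ and $\tilde q_\tau/(\varkappa\hat p_s)$ and integrate over $\hat\Omega$. The commutator terms $k_1,k_2$ and the cut-off contributions $\hat\nabla\hat\zeta$ are of lower order and can be absorbed via the standard $\lambda$-smallness argument already used in the proof of Lemma \ref{l3.5}, while contributions from interior subdomains are strictly simpler.

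To recover the remaining norm $\|q,q_t\|_1^2+|u|_{2,1}^2$ on the left I would exploit the elliptic structure obtained by combining (\ref{1.5}) and (\ref{1.10}): equation (\ref{2.13}) acts as a first order elliptic relation for $\nabla\tilde q$ in terms of quantities already controlled (after estimating the time derivatives via Lemma \ref{l4.1}), and (\ref{2.14}) returns $\nabla\divv\tilde u$, which together with the tangential estimate and classical second-order regularity for the Lam\'e operator $A$ (with Dirichlet data, as in Remark \ref{r2.1} and the bound $\|u\|_2\le c\|Au\|_0$) upgrades the information on $u$ to $\|u\|_2$. Differentiating once in time and repeating the same procedure produces $\|q_t\|_1$, and, combined with $\|u_t\|_1$ already from (\ref{4.1}), yields the full norm $|u|_{2,1}^2$.

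The main obstacle will be the book-keeping of the nonlinearities $\bar f,\bar h$ after the extra spatial (or time) derivative. Terms like $\eta u_t$, $\eta v_{s,t}$, $[\eta(u+v_s)+\varrho_s u]\nabla u$ in $\bar f$ and $\divv(\eta u)$ in $\bar h$, once differentiated, generate products whose $L_2$-norm must be controlled by H\"older and Sobolev embedding ($H^1\hookrightarrow L_6$, $H^2\hookrightarrow L_\infty$) in terms of $\varphi_1$ and $A_1$. The delicate point is to extract exactly the combination $c\varphi_1|f_s|_{1,0}^2+cA_1|g|_{1,0}^2+cA_1(1+A_1)\varphi_1+c(1+A_1+\varphi_1)\varphi_1^2$, together with the isolated term $\|\eta\|_1^2\|v_{stt}\|_0^2$ arising from time-differentiating $\eta v_{s,t}$, so that the right-hand side matches $cX_2$ as defined in (\ref{4.15}). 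Absorbing $\varepsilon$-terms into the principal part $|u|_{2,1}^2+\|q,q_t\|_1^2$ and summing over the partition of unity then gives (\ref{4.14}).
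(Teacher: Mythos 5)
Your roadmap matches the paper's proof: tangential energy estimates for the localized system (\ref{2.5})--(\ref{2.6}), then the third components of (\ref{2.13}) and (\ref{2.14}) for the normal derivatives of $q$ and of $\divv u$, then the Stokes-type elliptic estimate (problem (\ref{4.7}), giving $\|u\|_2^2+\|q\|_1^2$), and finally an absorption of the commutator/cut-off terms via the $\lambda$-smallness argument. Two details in your plan are imprecise and worth flagging so the inequality comes out in the stated form. First, the tangential estimate alone places only $\tfrac{d}{dt}\|\sqrt{\varrho_s}\,u_\tau\|_0^2$ under the time derivative near the boundary; to obtain the \emph{full} $\tfrac{d}{dt}\|\sqrt{\varrho_s}\,u_x\|_0^2$ on the left of (\ref{4.14}), the paper multiplies (\ref{1.5}) by $-Au$, integrates by parts via the $A^{1/2}$ identity of Remark \ref{r2.1}, and uses $(\ref{1.3})_2$ to arrive at (\ref{4.38}); in your sketch Remark \ref{r2.1} and $\|u\|_2\le c\|Au\|_0$ appear only in the elliptic-regularity role, so the energy-type contribution of $A^{1/2}u$ is not clearly accounted for. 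Second, for $\|q_t\|_1^2$ you propose to differentiate the whole system in time and repeat; the paper instead simply solves (\ref{1.10}) for $q_t$ and bounds its $H^1$-norm in terms of $\|\divv u\|_1^2$ and lower-order terms (see (\ref{4.34})). Your route would instead produce an additional energy term $\tfrac{d}{dt}\|q_{xt}\|_0^2$ on the left, which does not belong in (\ref{4.14}) (it is the subject of Lemma \ref{l4.6}). Both points are readily fixable, but as written the sketch would not reproduce (\ref{4.14}) exactly.
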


\begin{proof}
Differentiate (\ref{2.5}) and (\ref{2.6}) with respect to $\tau$, next multiply
by $\tilde u_\tau$ and $\tilde q_\tau$, respectively, add, integrate over
$\hat\Omega$ and use transformed equations $(\ref{1.3})_2$ and (\ref{2.2}).
Then we get
\begin{equation}
\eqal{
&{d\over dt}\intop_{\hat\Omega}\bigg(\hat\varrho_s\tilde u_\tau^2+
{1\over\varkappa\hat p_s}\tilde q_\tau^2\bigg)dz+\|\tilde u_{z\tau}\|_0^2+
\|\divv\tilde u_\tau\|_0^2\cr
&=-\intop_{\hat\Omega}(\hat\varrho_{s\tau}\tilde u_t+(\hat\varrho_s
\hat v_s)_{,\tau}\cdot\tilde u_z)\cdot\tilde u_\tau dz-\intop_{\hat\Omega}
\bigg[\bigg({1\over\varkappa\hat p_s}\bigg)_{,\tau}\tilde q_t+
\bigg({\hat v_s\over\varkappa\hat p_s}\bigg)_{,\tau}\tilde q_z\bigg]
\tilde q_\tau dz\cr
&\quad+(\varkappa+1)\intop_{\hat\Omega}{1\over\hat p_s}\hat\divv\hat v_s
\tilde q_\tau^2dz+\intop_{\hat\Omega}[\tilde{\bar f}_{,\tau}\cdot\tilde u_\tau+
k_{1,\tau}\tilde u_\tau+\tilde{\bar h}_{,\tau}\tilde q_\tau+
k_{2,\tau}\tilde q_\tau]dz.\cr}
\label{4.16}
\end{equation}
Let us introduce the quantities
\begin{equation}
\tilde\varphi_1(t)=|\tilde u|_{2,1}^2+|\tilde q|_{2,1}^2,\quad
\hat B_1(t)=\|\hat v_s(t)\|_{2,\hat\Omega}^2+\|\hat p_s(t)\|_{2,\hat\Omega}^2.
\label{4.17}
\end{equation}
Then the first three terms on the r.h.s. of (\ref{4.16}) are bounded by
$$
\varepsilon(\|\tilde u_\tau\|_1^2+\|\tilde q_\tau\|_0^2)+{c\over\varepsilon}
\hat B_1(1+\hat B_1)\tilde\varphi_1.
$$
The last term on the r.h.s. of (\ref{4.16}) is bounded by
$$
\varepsilon(\|\tilde u_{\tau\tau}\|_0^2+\|\tilde q_\tau\|_0^2)+
{c\over\varepsilon}(\|\tilde{\bar f}\|_0^2+\|\tilde{\bar h}\|_1^2)+
{c\over\varepsilon}(\|k_1\|_0^2+\|k_{2,\tau}\|_0^2).
$$
In view of (\ref{2.7}) and (\ref{2.8}) we have
\begin{equation}
\|k_1\|_0^2\le c\lambda(\|\tilde q_z\|_0^2+\|\tilde u_{zz}\|_0^2)+c
(\|\hat q\|_{0,\hat\Omega}^2+\|\hat u\|_{1,\hat\Omega}^2)
\label{4.18}
\end{equation}
and (\ref{2.9}), (\ref{2.10}) imply
\begin{equation}
\|k_{2,\tau}\|_0^2\le c\lambda\|\tilde u_{zz}\|_0^2+c\|\tilde u\|_1^2+
c\hat B_1(1+\hat B_1)\|\tilde q\|_1^2.
\label{4.19}
\end{equation}
Employing the above estimjates in (\ref{4.16}) and assuming that $\varepsilon$
is sufficiently small yields
\begin{equation}
\eqal{
&{d\over dt}\intop_{\hat\Omega}\bigg(\hat\varrho_s\tilde u_\tau^2+
{1\over\varkappa\hat p_s}\tilde q_\tau^2\bigg)dz+\|\tilde u_{z\tau}\|_0^2+
\|\divv\tilde u_\tau\|_0^2\cr
&\le\varepsilon\|\tilde q_\tau\|_0^2+{c\over\varepsilon}\lambda
(\|\tilde u_{zz}\|_0^2+\|\tilde q_z\|_0^2)+{c\over\varepsilon}
(\|\hat u\|_{1,\hat\Omega}^2+\|\hat q\|_{0,\hat\Omega}^2)\cr
&\quad+{c\over\varepsilon}\hat B_1(1+\hat B_1)\tilde\varphi_1+
{c\over\varepsilon}(\|\tilde{\bar f}\|_0^2+\|\tilde{\bar h}\|_1^2).\cr}
\label{4.20}
\end{equation}
To estimate the last two norms we introduce
\begin{equation}
\hat\varphi_1(t)=|\hat u|_{2,1,\hat\Omega}^2+|\hat q|_{2,1,\hat\Omega}^2,\quad
\hat B_2(t)=|\hat v_s(t)|_{2,1,\hat\Omega}^2+
|\hat\varrho_s(t)|_{2,1,\hat\Omega}^2.
\label{4.21}
\end{equation}
Then, using (\ref{2.11}), we have
\begin{equation}
\eqal{
\|\tilde{\bar f}\|_0^2&\le c\|\hat\eta\|_{2,\hat\Omega}^2
\|\tilde f_s\|_0^2+c\|\tilde g\|_0^2+c\hat B_2(1+\hat B_2)\tilde\varphi_1\cr
&\quad+c(1+\hat B_2+\hat\varphi_1)\hat\varphi_1\tilde\varphi_1.\cr}
\label{4.22}
\end{equation}
Finally, (\ref{2.12}) implies
\begin{equation}
\|\tilde{\bar h}\|_1^2\le c(1+\hat B_2)(\hat B_2+\hat\varphi_1)\tilde\varphi_1.
\label{4.23}
\end{equation}
Using estimate (\ref{4.22}) and (\ref{4.23}) in (\ref{4.20}) yields
\begin{equation}
\eqal{
&{d\over dt}\intop_{\hat\Omega}\bigg(\hat\varrho_s\tilde u_\tau^2+
{1\over\varkappa\hat p_s}\tilde q_\tau^2\bigg)dz+\|\tilde u_{z\tau}\|_0^2+
\|\divv\tilde u_\tau\|_0^2\cr
&\le\varepsilon\|\tilde q_\tau\|_0^2+{c\over\varepsilon}\lambda
(\|\tilde u_{zz}\|_0^2+\|\tilde q_z\|_0^2)+{c\over\varepsilon}
(\|\hat u\|_{1,\hat\Omega}^2+\|\hat q\|_{0,\hat\Omega}^2)\cr
&\quad+{c\over\varepsilon}\hat B_2(1+\hat B_2)\tilde\varphi_1+
{c\over\varepsilon}(1+\hat B_2+\hat\varphi_1)\hat\varphi_1\tilde\varphi_1+
c\hat\varphi_1\|\tilde f_s\|_0^2+c\|\tilde g\|_0^2.\cr}
\label{4.24}
\end{equation}
Multiplying the third component of (\ref{2.13}) by $\tilde q_n$ and integrating
the result over $\hat\Omega$ yields
\begin{equation}
\eqal{
&{\mu+\nu\over\varkappa}\intop_{\hat\Omega}\partial_n\bigg({1\over\hat p_s}
\tilde q_t\bigg)\tilde q_ndz+{\mu+\nu\over\varkappa}\intop_{\hat\Omega}
\partial_n\bigg({1\over\hat p_s}\hat v_s\cdot\hat\nabla\tilde q\bigg)
\tilde q_ndz+\|\tilde q_n\|_0^2\cr
&\le c(\|\tilde u_{z\tau}\|_0^2+\|\tilde u_t\|_0^2+\|\hat v_s\|_{1,\hat\Omega}^2\|\tilde u\|_2^2+
\|\tilde{\bar f}\|_0^2+\|k_1\|_0^2)+c(\|\tilde{\bar h}\|_1^2+\|k_2\|_1^2).\cr}
\label{4.25}
\end{equation}
The first two terms on the l.h.s. of (\ref{4.25}) equal
$$\eqal{
&{\mu+\nu\over2\varkappa}\intop_{\hat\Omega}\bigg({1\over\hat p_s}\partial_t
\tilde q_n^2+{1\over\hat p_s}\hat v_s\cdot\hat\nabla\tilde q_n^2\bigg)dz+
{\mu+\nu\over\varkappa}\intop_{\hat\Omega}\bigg[\partial_n
\bigg({1\over\hat p_s}\bigg)\tilde q_t\tilde q_n\cr
&\quad+\partial_n
\bigg({\hat v_s\over\hat p_s}\bigg)\cdot\hat\nabla\tilde q\tilde q_n\bigg]dz\equiv I_1+I_2.\cr}
$$
Employing equation (\ref{2.2}) transformed to variables $z$ in $I_1$ yields
$$
{\mu+\nu\over2\varkappa}{d\over dt}\intop_{\hat\Omega}{1\over\hat p_s}
\tilde q_n^2dz+{(\mu+\nu)(\varkappa+1)\over2\varkappa}\intop_{\hat\Omega}
{1\over\hat p_s}\hat\divv\hat v_s\tilde q_n^2dz,
$$
where the second integral is bounded by
$$
\varepsilon\|\tilde q_n\|_0^2+c/\varepsilon\|\hat v_s\|_{2,\hat\Omega}^2
\|\tilde q_n\|_0^2.
$$
Next,
$$
|I_2|\le\varepsilon\|\tilde q_n\|_0^2+{c\over\varepsilon}
\|\hat p_s\|_{2,\hat\Omega}^2(\|\tilde q_t\|_0^2+\|\hat v_s\|_{2,\hat\Omega}^2
\|\tilde q\|_2^2).
$$
In view of the above relations and estimates (\ref{4.18}), (\ref{4.19}),
(\ref{4.22}), (\ref{4.23}) we obtain from (\ref{4.25}), for sufficiently small
$\varepsilon$, the inequality
\begin{equation}
\eqal{
&{d\over dt}\intop_{\hat\Omega}{1\over\hat p_s}\tilde q_n^2dz+
\|\tilde q_n\|_0^2\le c(\|\tilde u_{z\tau}\|_0^2+\|\tilde u_t\|_0^2)\cr
&\quad+{c\over\varepsilon}\lambda(\|\tilde u_{zz}\|_0^2+
\|\tilde q_z\|_0^2)+{c\over\varepsilon}(\|\hat u\|_{1,\hat\Omega}^2+
\|\hat q\|_{0,\hat\Omega}^2)\cr
&\quad+{c\over\varepsilon}\hat B_2(1+\hat B_2)\tilde\varphi_1+{c\over\varepsilon}
(1+\hat B_2+\hat\varphi_1)\hat\varphi_1\tilde\varphi_1+c\hat\varphi_1
\|\tilde f_s\|_0^2+c\|\tilde g\|_0^2,\cr}
\label{4.26}
\end{equation}
where (\ref{4.2}) and (\ref{4.21}) were used.

Taking the $L_2$-norm of the third component of (\ref{2.14}) yields
\begin{equation}
\eqal{
&\|\divv\tilde u_n\|_0^2\le c(\|\tilde u_{z\tau}\|_0^2+\|\tilde u_t\|_0^2+
\|\tilde q_n\|_0^2)+c\|\hat v_s\|_{2,\hat\Omega}^2\|\tilde u\|_1^2\cr
&\quad+c(\|\tilde{\bar f}\|_0^2+\|k_1\|_0^2).\cr}
\label{4.27}
\end{equation}
Using again (\ref{4.18}) and (\ref{4.22}) and also notation (\ref{4.17}),
(\ref{4.21}) we have
\begin{equation}
\eqal{
&\|\divv\tilde u_n\|_0^2\le c(\|\tilde u_{z\tau}\|_0^2+\|\tilde u_t\|_0^2+
\|\tilde q_n\|_0^2)+c\lambda(\|\tilde u_{zz}\|_0^2+\|\tilde q_z\|_0^2)\cr
&\quad+c(\|\hat u\|_{1,\hat\Omega}^2+\|\hat q\|_{0,\hat\Omega}^2)+c\hat B_2
(1+\hat B_2)\tilde\varphi_1+c(1+\hat B_2+\hat\varphi_1)
\hat\varphi_1\tilde\varphi_1\cr
&\quad+c\hat\varphi_1\|\tilde f_s\|_0^2+c\|\tilde g\|_0^2.\cr}
\label{4.28}
\end{equation}
Adding appropriately (\ref{4.24}), (\ref{4.26}) and (\ref{4.28}) gives
\begin{equation}
\eqal{
&{d\over dt}\intop_{\hat\Omega}\bigg(\hat\varrho_s\tilde u_\tau^2+
{1\over\hat p_s}\tilde q_\tau^2\bigg)dz+{d\over dt}\intop_{\hat\Omega}
{1\over\hat p_s}\tilde q_n^2dz+\|\tilde u_{z\tau}\|_0^2+
\|\divv\tilde u\|_1^2+\|\tilde q_n\|_0^2\cr
&\le\varepsilon\|\tilde q_\tau\|_0^2+c\|\tilde u_t\|_0^2+{c\over\varepsilon}\lambda(\|\tilde u_{zz}\|_0^2+
\|\hat q\|_{0,\hat\Omega}^2)+{c\over\varepsilon}\hat B_2(1+\hat B_2)\tilde\varphi_1\cr
&\quad+{c\over\varepsilon}(1+\hat B_2+\hat\varphi_1)\hat\varphi_1\tilde\varphi_1+
c\hat\varphi_1\|\tilde f_s\|_0^2+c\|\tilde g\|_0^2.\cr}
\label{4.29}
\end{equation}
Passing to the old variables $x$ in (\ref{4.29}), deriving an inequality
similar to (\ref{4.29}) in an interior subdomain, summing the inequalities
over all neighborhoods of the partition of unity, we obtain
\begin{equation}
\eqal{
&{d\over dt}\intop_{\hat\Omega}\bigg(\varrho_su_\tau^2+{1\over p_s}q_x^2\bigg)dx
+\|u_{z\tau}\|_0^2+\|\divv u\|_1^2+\|q_n\|_0^2\cr
&\le\varepsilon\|q_\tau\|_0^2+c\|u_t\|_0^2+{c\over\varepsilon}\lambda
(\|u_{xx}\|_0^2+\|q_x\|_0^2)+{c\over\varepsilon}(\|u\|_1^2+\|q\|_0^2)\cr
&\quad+{c\over\varepsilon}A_1(1+A_1)\varphi_1+{c\over\varepsilon}(1+A_1+\varphi_1)
\varphi_1^2+c\varphi_1\|f_s\|_0^2+c\|g\|_0^2,\cr}
\label{4.30}
\end{equation}
where $u_\tau$ means that in a~neighborhood of the boundary there are only
tangent derivatives. Similarly, $q_n$ means that only normal derivative near
the boundary appears.

For solutions to problem (\ref{4.7}) we have
\begin{equation}
\|u\|_2^2+\|q\|_1^2\le c(\|\divv u\|_1^2+\|\bar f\|_0^2+\|u_t\|_0^2+
\|v_s\|_2^2\|u\|_1^2).
\label{4.31}
\end{equation}
Adding appropriately (\ref{4.30}) and (\ref{4.31}) yields
\begin{equation}
\eqal{
&{d\over dt}\intop_\Omega\bigg(\varrho_su_\tau^2+{1\over p_s}q_x^2\bigg)dx+
\|u\|_2^2+\|q\|_1^2\le c\|u_t\|_0^2+c(\|u\|_1^2+\|q\|_0^2)\cr
&\quad+cA_1(1+A_1)\varphi_1+c(1+A_1+\varphi_1)\varphi_1^2+c\varphi_1
\|f_s\|_0^2+c\|g\|_0^2,\cr}
\label{4.32}
\end{equation}
where we used that $\lambda$ is sufficiently small and
\begin{equation}
\|\bar f\|_0^2\le cA_1(1+A_1)\varphi_1+c(1+A_1+\varphi_1)\varphi_1^2+
c\varphi_1\|f_s\|_0^2+c\|g\|_0^2.
\label{4.33}
\end{equation}
Equation (\ref{1.10}) yields
\begin{equation}
\|q_t\|_1^2\le c(\|\divv u\|_1^2+A_1(1+A_1)\varphi_1+(1+A_1)\varphi_1^2).
\label{4.34}
\end{equation}
Then (\ref{4.32}) and (\ref{4.34}) yield
\begin{equation}
{d\over dt}\intop_\Omega\bigg(\varrho_su_\tau^2+{1\over p_s}q_x^2\bigg)dx+
\|u\|_2^2+\|q,q_t\|_1^2\le c(\|u_t\|_0^2+\|u\|_1^2+\|q\|_0^2)+cX_1^2,
\label{4.35}
\end{equation}
where
\begin{equation}
X_1^2=A_1(1+A_1)\varphi_1+(1+A_1+\varphi_1)\varphi_1^2+\varphi_1
\|f_s\|_0^2+\|g\|_0^2.
\label{4.36}
\end{equation}
To obtain the full derivatives with respect to $x$ of $u$ under the time
derivative in (\ref{4.35}) we multiply (\ref{1.5}) by $-Au$ and integrate
over $\Omega$. Then we have
\begin{equation}
\eqal{
&-\intop_\Omega\varrho_su_tAudx-\intop_\Omega\varrho_s v_s\cdot\nabla uAudx+
\|Au\|_0^2\cr
&\le c(\|q_x\|_0^2+\|\bar f\|_0^2).\cr}
\label{4.37}
\end{equation}
Integrating by parts the first two terms on the l.h.s. of (\ref{4.37}) equals
$$\eqal{
&\intop_\Omega A^{1/2}(\varrho_su_t+\varrho_sv_s\cdot\nabla u)\cdot A^{1/2}udx\cr
&=\intop[A^{1/2}\varrho_su_t+A^{1/2}(\varrho_sv_s)\cdot\nabla u]\cdot
A^{1/2}udx\cr
&\quad+\intop_\Omega(\varrho_sA^{1/2}u_t+\varrho_sv_s\cdot\nabla A^{1/2}u)
A^{1/2}udx\equiv I_1+I_2,\cr}
$$
where
$$
|I_1|\le\varepsilon\|u_x\|_1^2+c/\varepsilon\|\varrho_s\|_2^2(1+\|v_s\|_2^2)
(\|u\|_2^2+\|u_t\|_0^2)
$$
and application of $(\ref{1.3})_2$ in $I_2$ yields
$$
I_2={1\over2}{d\over dt}\intop_\Omega\varrho_s|A^{1/2}u|^2dx.
$$
Then (\ref{4.37}) takes the form
\begin{equation}
{d\over dt}\intop_\Omega\varrho_s|A^{1/2}u|^2dx+\|Au\|_0^2\le c
(\|q_x\|_0^2+X_1^2).
\label{4.38}
\end{equation}
Inequalities (\ref{4.35}) and (\ref{4.38}) imply
\begin{equation}
\eqal{
&{d\over dt}\intop_\Omega\bigg(\varrho_su_x^2+{1\over p_s}q_x^2\bigg)dx+
\|u\|_2^2+\|q,q_t\|_1^2\le c(\|u_t\|_0^2+\|u\|_1^2\cr
&\quad+\|q\|_0^2)+cX_1^2.\cr}
\label{4.39}
\end{equation}
To combine estimates (\ref{4.1}) and (\ref{4.39}) we need to use the estimates
which are written in the more explicit way than in Lemma \ref{l4.1}
\begin{equation}
\eqal{
|f_{1t}|_{6/5}^2&\le c|\eta|_{1,0}^2|f_s|_{1,0}^2+|\varrho_s|_{1,0}^2|g|_{1,0}^2\cr
&\quad+c\|\eta\|_1^2\|v_{stt}\|_0^2+c|v_s|_{2,1}^4|\eta|_{2,1}^2+c|v_s|_{2,1}^2
|\varrho_s|_{2,1}^2|u|_{2,1}^2\cr
&\quad+c|v_s|_{2,1}^2|\eta|_{2,1}^2|u|_{2,1}^2+c|\varrho_s|_{2,1}^2|u|_{2,1}^4
+c|\eta|_{2,1}^2|u|_{2,1}^4.\cr}
\label{4.40}
\end{equation}
Moreover,
\begin{equation}
\intop_\Omega\bar f_{2t}\cdot u_tdx=-{1\over2}{d\over dt}\intop_\Omega\eta
u_t^2dx+{1\over2}\intop\eta_tu_t^2dx,
\label{4.41}
\end{equation}
where the second integral is bounded by
$$
\varepsilon\|u_t\|_1^2+c/\varepsilon\|\eta_t\|_1^2\|u_t\|_1^2.
$$
Next, we have
\begin{equation}
\|\bar h_0\|_0^2\le c(\|p_s\|_2^2+\|v_s\|_2^2)(\|u\|_2^2+\|q\|_2^2)+c(\|u\|_2^2+\|q\|_2^2)^2
\label{4.42}
\end{equation}
and
\begin{equation}
\|\bar h_t\|_0^2\le c(1+|p_s|_{2,1}^2)[|p_s|_{2,1}^2|u|_{2,1}^2+|v_s|_{2,1}^2
|q|_{2,1}^2+|q|_{2,1}^2|u|_{2,1}^2].
\label{4.43}
\end{equation}
Employing (\ref{4.40})--(\ref{4.43}) in (\ref{4.1}) yields
\begin{equation}
\eqal{
&{d\over dt}\bigg\|\sqrt{\varrho_s}u,\sqrt{\varrho}u_t,{1\over\sqrt{p_s}}q,
{1\over\sqrt{p_s}}q_t\bigg\|_0^2+\|u,u_t\|_1^2+\|q,q_t\|_0^2\cr
&\le cA_1(1+A_1)\varphi_1+c(1+A_1)\varphi_1^2+c|\eta|_{1,0}|^2|f_s|_{1,0}^2+
c|\varrho_s|_{1,0}^2|g|_{1,0}^2\cr
&\quad+c\|\eta\|_1^2\|v_{stt}\|_0^2.\cr}
\label{4.44}
\end{equation}
From (\ref{4.44}) and (\ref{4.39}) we obtain (\ref{4.14}). This concludes the
proof of Lemma \ref{l4.2}.
\end{proof}

Next we formulate the lemma describing higher regularity of time derivative
of $u$.

\begin{lemma}\label{l4.3}
For sufficiently smooth solutions we have
\begin{equation}
\eqal{
&{d\over dt}\bigg\|\sqrt{\varrho_s}u,\sqrt{\varrho}u_t,{1\over\sqrt{p_s}}q,
{1\over\sqrt{p_s}}q_t,\sqrt{\varrho_s}u_x,\sqrt{\varrho}u_{xt},
{1\over\sqrt{p_s}}q_x\|_0^2\cr
&\quad+\|u,u_t\|_2^2+\|q,q_t\|_1^2\le\varepsilon\|u_{xxx}\|_0^2+c/\varepsilon
X_3^2,\cr}
\label{4.45}
\end{equation}
where
\begin{equation}
\eqal{
X_3^2&=\varphi_1|f_s|_{1,0}^2+A_1|g|_{1,0}^2+\|\eta\|_2^2\|v_{stt}\|_0^2+
A_1(1+A_1+A_1^3)\varphi_1\cr
&\quad+A_1\varphi_1^2+(1+\varphi_1+\varphi_1^3)\varphi_1^2.\cr}
\label{4.46}
\end{equation}
\end{lemma}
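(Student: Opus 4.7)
The plan is to supplement the estimate (\ref{4.14}) of Lemma~\ref{l4.2} with a parabolic-type bound on $\|u_t\|_2$ derived by testing the time-differentiated momentum equation against $-Au_t$. Since (\ref{4.14}) already produces the $\|u,u_t\|_1^2+\|q,q_t\|_1^2$ term on the left and the $X_2^2$-nonlinearity on the right, only the new information about second spatial derivatives of $u_t$ and the corresponding time-derivative term $\tfrac{d}{dt}\|\sqrt{\varrho_s}\,u_{xt}\|_0^2$ must be produced; adding the resulting inequality to (\ref{4.14}) with a small weight then yields (\ref{4.45}).

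The first main step is to differentiate (\ref{1.5}) with respect to $t$, multiply by $-Au_t$, integrate over $\Omega$, and apply the integration-by-parts identity of Remark~\ref{r2.1}. Transferring one $A^{1/2}$ onto $\varrho_s u_{tt}$ and using $(\ref{1.3})_2$ to rewrite the dominant piece, one obtains, in the spirit of the proof of Lemma~\ref{l3.4},
\begin{equation*}
\tfrac{1}{2}\frac{d}{dt}\int_\Omega\varrho_s|A^{1/2}u_t|^2\,dx + \|Au_t\|_0^2 \le \varepsilon\|Au_t\|_0^2 + \text{(remainder)},
\end{equation*}
where the coercive term $\|Au_t\|_0^2$ controls $\|u_t\|_2^2$ via the elliptic estimate (\ref{3.24})--(\ref{3.25}) applied with $u_t|_S=0$.

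The second main step is to bound the remainder, which splits into contributions from $-\int_\Omega\nabla q_t\cdot Au_t\,dx$ and from $\int_\Omega\bar f_t\cdot Au_t\,dx$. The pressure piece is absorbed by $\varepsilon\|Au_t\|_0^2+c\varepsilon^{-1}\|q_t\|_1^2$, and $\|q_t\|_1^2$ is estimated by differentiating (\ref{1.10}) once in space, giving a bound by quantities already present in $\varphi_1$, $A_1$ and $\|g\|_{1,0}^2$. For $\bar f_t$ one computes $\partial_t$ of each summand of $\bar f$; the critical subterm is $\partial_t(\eta u_t)=\eta_t u_t+\eta u_{tt}$, for which $u_{tt}$ is expressed from (\ref{1.5}) exactly as in (\ref{3.29})--(\ref{3.30}). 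Its $L_2$-norm, bounded by products of the form $|\varrho_{s,t}|_6\|Au\|_3$ together with the lower-order pieces of (\ref{3.30}), is what generates via interpolation (\ref{2.16}) the $\varepsilon\|u_{xxx}\|_0^2$ term on the right of (\ref{4.45}).

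The hard part will be the systematic bookkeeping of the nonlinear products arising from $\partial_t\bar f$: each summand produces, on time differentiation, a product of up to four factors among $\eta$, $u$, $v_s$, $\varrho_s$ and their first space/time and second space derivatives. Using $H^2\hookrightarrow L_\infty$ and $H^1\hookrightarrow L_6$ one must express every such product as a polynomial in $\varphi_1$ and $A_1$ matching the explicit form of $X_3^2$ in (\ref{4.46}); in particular the contribution $\|\eta\|_2^2\|v_{stt}\|_0^2$ comes from $\partial_t(\eta v_{s,t})$, the block $A_1(1+A_1+A_1^3)\varphi_1$ from the time-differentiated convective and transport terms $\partial_t[\eta(v_s+u)\cdot\nabla v_s+\varrho_s u\cdot\nabla v_s]$ combined with the $|\varrho_{s,t}|_6^4$ factor appearing in (\ref{3.31})-type bounds, and the cubic power $\varphi_1^3$ from $\partial_t\bigl[(\eta(u+v_s)+\varrho_s u)\nabla u\bigr]$. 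Once all such products have been absorbed into $X_3^2$ and the small-$\varepsilon$ coefficients of $\|u_t\|_1^2$-type terms are swallowed by the left-hand side of (\ref{4.14}), inequality (\ref{4.45}) follows.
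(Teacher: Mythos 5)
Your overall strategy is the same as the paper's: time-differentiate the momentum equation, test against $-Au_t$, move the principal term under a time derivative via the $A^{1/2}$ integration-by-parts identity, express $u_{tt}$ from the equation to produce the $\varepsilon\|u_{xxx}\|_0^2$ term, and finally add the result to (\ref{4.14}). However, there is one structural step you omit that matters for the precise form of (\ref{4.45}): before differentiating, the paper rewrites (\ref{1.5}) as (\ref{4.47}),
\begin{equation*}
\varrho u_t+\varrho v\cdot\nabla u-\mu\Delta u-\nu\nabla\divv u+\nabla q=\bar f_3,
\end{equation*}
so that the $-\eta u_t$ term and the nonlinear convective part of $\bar f$ are absorbed into the left-hand side, making the leading coefficient the full density $\varrho$ and the transport velocity the full $v$. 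It is then the full continuity equation $(\ref{1.1})_2$, not $(\ref{1.3})_2$, that collapses the coupled $\partial_t$- and $v\cdot\nabla$-pieces to exactly $\tfrac12\frac{d}{dt}\int_\Omega\varrho|A^{1/2}u_t|^2\,dx$, which is the origin of the $\sqrt{\varrho}\,u_{xt}$ weight stated in (\ref{4.45}). Your variant keeps $\varrho_s$ in front of $u_t$ and uses $(\ref{1.3})_2$; this yields $\frac{d}{dt}\|\sqrt{\varrho_s}\,u_{xt}\|_0^2$, which does not match (\ref{4.45}) literally (though the weights are equivalent by (\ref{1.14})), and it forces you to handle $\partial_t(\eta u_t)$ as well as the time derivative of the full nonlinear convective block $[\eta(u+v_s)+\varrho_s u]\cdot\nabla u$ on the right, in particular the $\nabla u_t$-carrying pieces. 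These can still be absorbed via Cauchy--Schwarz into $\varepsilon\|Au_t\|_0^2+c/\varepsilon\,\varphi_1$-type terms, so the differential inequality survives, but the bookkeeping is strictly heavier than in the paper. Two small inaccuracies to correct: the $\|q_t\|_1^2$ bound from (\ref{1.10}) does not involve $g$ (cf.\ (\ref{4.34})), and the $\varepsilon\|u_{xxx}\|_0^2$ term ultimately arises from $\|\varrho_t u_{xx}\|_0^2$ with the full $\varrho_t=\varrho_{st}+\eta_t$, not $\varrho_{st}$ alone, once $u_{tt}$ is solved out from the equation.
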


\begin{proof}
To prove the lemma we consider (\ref{1.5}) in the form
\begin{equation}
\eqal{
&\varrho u_t+\varrho v\cdot\nabla u-\mu\Delta u-\nu\nabla\divv u+\nabla q=
\eta f_s+\varrho_sg\cr
&\quad-[\eta(v_{st}+(v_s+u)\cdot\nabla v_s)+\varrho_su\cdot\nabla v_s]
\equiv\bar f_3.\cr}
\label{4.47}
\end{equation}
Differentiating (\ref{4.47}) with respect to time, multiplying by $-Au_t$ and
integrating over $\Omega$ one obtains
\begin{equation}
\eqal{
&-\intop_\Omega\varrho_tu_t\cdot Au_tdx-\intop_\Omega\varrho u_{tt}\cdot
Au_tdx-\intop_\Omega(\varrho v)_t\cdot\nabla u\cdot Au_tdx\cr
&\quad-\intop_\Omega\varrho v\cdot\nabla u_t\cdot Au_tdx+\|Au_t\|_0^2\le c
(\|\nabla q_t\|_0^2+\|\bar f_{st}\|_0^2).\cr}
\label{4.48}
\end{equation}
The first term on the l.h.s. of the above inequality is bounded by
$$
\varepsilon\|Au_t\|_0^2+c/\varepsilon\|\varrho_tu_t\|_0^2\le\varepsilon
\|Au_t\|_0^2+c/\varepsilon(A_1+\varphi_1)\varphi_1,
$$
and the third by
$$
\varepsilon\|Au_t\|_0^2+c/\varepsilon(\|\varrho_tv\cdot\nabla u\|_0^2+
\|\varrho v_t\cdot\nabla u\|_0^2)\le\varepsilon\|Au_t\|_0^2+c/\varepsilon
(A_1+\varphi_1)^2\varphi_1.
$$
Integrating by parts in the second and the fourth terms on the l.h.s. of
(\ref{4.48}), we derive
$$\eqal{
I&=\sum_{\alpha=1}^2\bigg(-\intop_\Omega A_\alpha^{1/2}(\varrho u_{tt}\cdot
A_\alpha^{1/2}u_t)dx+\intop_\Omega A_\alpha^{1/2}(\varrho u_{tt})\cdot
A_\alpha^{1/2}u_tdx\cr
&\quad-\intop_\Omega A_\alpha^{1/2}(\varrho v\cdot\nabla u_t\cdot
A_\alpha^{1/2}u_t)dx+\intop_\Omega A_\alpha^{1/2}(\varrho v\cdot\nabla u_t)
\cdot A_\alpha^{1/2}u_tdx)\cr
&\equiv\sum_{i=1}^4I_i.\cr}
$$
The operator $A=\mu\Delta+\nu\nabla\divv$, so $A_1^{1/2}=\sqrt{\mu}\nabla$,
$A_2^{1/2}=\sqrt{\nu}\divv$. Therefore,
$I_1=-\intop_\Omega[\mu\nabla\cdot(\varrho u_{tt}\cdot\nabla u_t)+\nu\divv
(\varrho u_{tt}\divv u_t)]dx$, so the Green formula and $u|s=0$ imply that
$I_1=0$. Next
$$\eqal{
I_3&=-\intop_\Omega\mu\nabla\cdot(\varrho v\cdot\nabla u_t\cdot\nabla u_t)dx-
\nu\intop_\Omega\divv(\varrho v\cdot\nabla u_t\divv u_t)dx\cr
&=-\mu\intop_S\varrho v\cdot\nabla u_t\cdot\bar n\cdot\nabla u_tdS-
\nu\intop_S\varrho v\cdot\nabla v_t\cdot\bar n\divv u_tdS.\cr}
$$
Since $v|_S=0$ it follows that $I_3=0$. therefore $I=I_2+I_4$. Continuing,
we have
$$\eqal{
I&=\sum_{\alpha=1}^2\bigg[\intop_\Omega A_\alpha^{1/2}\varrho u_{tt}
A_\alpha^{1/2}u_tdx+\intop_\Omega\varrho A_\alpha^{1/2}u_{tt}\cdot
A_\alpha^{1/2}u_tdx\cr
&\quad+\intop_\Omega A_\alpha^{1/2}(\varrho v)\cdot\nabla u_t\cdot
A_\alpha^{1/2}u_tdx+\intop_\Omega\varrho v\cdot\nabla A_\alpha^{1/2}u_t
A_\alpha^{1/2}u_tdx\bigg]\cr
&\equiv\sum_{i=1}^4J_i.\cr}
$$
Using the equation of continuity $(\ref{1.1})_2$ yields
$$\eqal{
J_2+J_4&={1\over2}\sum_{\alpha=1}^2\intop_\Omega(\varrho\partial_t
|A_\alpha^{1/2}u_t|^2+\varrho v\cdot\nabla|A_\alpha^{1/2}u_t|^2)dx\cr
&={1\over2}\sum_{\alpha=1}^2{d\over dt}\intop_\Omega\varrho
|A_\alpha^{1/2}u_t|^2dx={1\over2}{d\over dt}\intop_\Omega\varrho(\mu u_{xt}^2+
\nu|\divv u_t|^2)dx.\cr}
$$
Next
$$
|J_1|\le\varepsilon\|u_{tt}\|_0^2+{c\over\varepsilon}(|\varrho_{sx}|_6^2+
|\eta_x|_6^2)|u_{xt}|_3^2\equiv J'_1.
$$
Employing the interpolation inequality (\ref{2.16}) we obtain
\begin{equation}
J'_1\le\varepsilon\|u_{tt}\|_0^2+{c\over\varepsilon}\varepsilon_1
\|u_{xt}\|_1^2+{c\over\varepsilon}\varepsilon_1^{-1}(\|\varrho_{sx}\|_1^4+
\|\eta_x\|_1^4)\|u_{xt}\|_0^2,
\label{4.49}
\end{equation}
where the last term in (\ref{4.49}) is bounded by
${c\over\varepsilon}\varepsilon^{-1}(A_1^2+\varphi_1^2)\varphi_1$. Finally,
$$
|J_3|\le c\intop_\Omega(|\varrho_x|\,|v|+|\varrho|\,|v_x|)|v_{xt}|^2dx\le c
(|\varrho_x|_3|v|_\infty+|\varrho|_\infty|v_x|_3)|u_{xt}|_3^2\equiv J'_3.
$$
Applying again (\ref{2.16}) yields
$$
J'_3\le\varepsilon\|u_{xt}\|_1^2+c/\varepsilon(\|\varrho_s\|_2^4+
\|\eta\|_2^4)(\|v_s\|_2^4+\|\eta\|_2^4)\|u_{xt}\|_0^2,
$$
where the second term is bounded by
$$
{c\over\varepsilon}(A_1^2+\varphi_1^2)^2\varphi_1.
$$
Employing the above estimate in (\ref{4.48}) gives
\begin{equation}
\eqal{
&{d\over dt}\intop_\Omega\varrho\sum_{\alpha=1}^2|A_\alpha^{1/2}u_t|^2dx+
\|u_t\|_2^2\le\varepsilon\|u_{tt}\|_0^2+c(\|\nabla q_t\|_0^2+
\|\bar f_{3t}\|_0^2)\cr
&\quad+c(A_1+A_1^2+A_1^4)\varphi_1+(\varphi_1+\varphi_1^2+\varphi_1^4)
\varphi_1.\cr}
\label{4.50}
\end{equation}
To end the proof we have to estimate $\|\bar f_{st}\|_0$ and $\|u_{tt}\|_0$.
From (\ref{4.47}) we derive
$$\eqal{
\|\bar f_{3t}\|_0^2&\le c(|\eta|_{2,1}^2|f_s|_{1,0}^2+|\varrho_s|_{2,1}^2
|g|_{1,0}^2+\|\eta\|_2^2\|v_{stt}\|_0^2\cr
&\quad+A_1\varphi_1+A_1(A_1+\varphi_1)\varphi_1).\cr}
$$
Calculating $u_t$ from (\ref{4.47}) and differentaiting the result with
respect to $t$ yields
$$\eqal{
u_{tt}&=-{1\over\varrho}[-\varrho v\cdot\nabla u+\mu\Delta u+\nu\nabla\divv u-
\nabla q+\bar f_3]_{,t}\cr
&\quad-{1\over\varrho^2}\varrho_t[-\varrho v\cdot\nabla u+\mu\Delta u+
\nu\nabla\divv u-\nabla q+\bar f_3].\cr}
$$
Hence, we calculate
$$\eqal{
\|u_{tt}\|_0^2&\le c(\|(\varrho v\cdot\nabla u)_t\|_0^2+\|u_t\|_2^2+
\|q_{xt}\|_0^2+\|\bar f_{3,t}\|_0^2)\cr
&\quad+c\|\varrho_t(|v\cdot\nabla u|+|u_{xx}|+|q_x|+|\bar f_3|)\|_0^2\cr
&\le c(\|u_t\|_2^2+\|q_t\|_1^2+\|\bar f_{3,t}\|_0^2)+c
\|(\varrho v\cdot\nabla u)_t\|_0^2+c\|\varrho_tu_{xx}\|_0^2\cr
&\quad+c\|\varrho_tq_x\|_0^2+c\|\varrho_t\bar f_3\|_0^2,\cr}
$$
where
$$\eqal{
\|(\varrho v\cdot\nabla u)_{,t}\|_0^2
&\le c(A_1+\varphi_1)\varphi_1+c(A_1+\varphi_1)^2\varphi_1,\cr
\|\varrho_tu_{xx}\|_0^2
&\le|\varrho_t|_6^2|u_{xx}|_3^2\le\varepsilon\|u_{xxx}\|_0^2+c/\varepsilon
\|\varrho_t\|_1^4\|u_{xx}\|_0^2\cr
&\le\varepsilon\|u_{xxx}\|_0^2+c/\varepsilon(A_1+\varphi_1)^2\varphi_1,\cr}
$$
where (\ref{2.16}) is used. Continuing
$$
\|\varrho_tq_x\|_0^2\le\|\varrho_t\|_1^2\|q_x\|_1^2\le(A_1+\varphi_1)\varphi_1
$$
and
$$\eqal{
\|\varrho_t\bar f_3\|_0^2&\le\|\varrho_t\|_1^2\|\bar f_3\|_1^2\le
c(A_1+\varphi_1)[\varphi_1\|f_s\|_1^2+A_1\|g\|_1^2\cr
&\quad+A_1^2\varphi_1+A_1\varphi_1^2].\cr}
$$
Summarizing, we have
$$\eqal{
&\|u_{tt}\|_0^2\le\varepsilon\|u_{xxx}\|_0^2+c(\|u_t\|_2^2+\|q_t\|_1^2+
\|\bar f_{3,t}\|_0^2)\cr
&\quad+{c\over\varepsilon}[(A_1+\varphi_1)^2\varphi_1+(A_1+\varphi_1)\varphi_1]
+c\varphi_1\|f_s\|_1^2+cA_1\|g\|_1^2+c\|v_{st}\|_1^2\varphi_1.\cr}
$$
In view of the above estimates inequality (\ref{4.50}) takes the form
\begin{equation}
\eqal{
&{d\over dt}\|\sqrt{\varrho}u_{xt}\|_0^2+\|u_t\|_2^2\le\varepsilon
\|u_{xxx}\|_0^2+c\|q_{xt}\|_0^2+c(\varphi_1|f_s|_{1,0}^2\cr
&\quad+|\varrho_s|_{1,0}^2|g|_{1,0}^2+\|\eta\|_2^2\|v_{stt}\|_0^2+
(A_1+A_1^2+A_1^4)\varphi_1\cr
&\quad+A_1\varphi_1^2+(\varphi_1+\varphi_1^2+\varphi_1^4)\varphi_1).\cr}
\label{4.51}
\end{equation}
Adding appropriately (\ref{4.14}) and (\ref{4.51}) we obtain (\ref{4.45}).
This concludes the proof.
\end{proof}

Finally, we pass to estimate the third spacial derivatives of velocity and
the second spacial derivatives of density.

\begin{lemma}\label{l4.4}
For sufficiently regular solutions we have
\begin{equation}
\eqal{
&{d\over dt}\intop_\Omega\bigg(\varrho_su_{xx}^2+{1\over p_s}q_{xx}^2)dx+
\|u\|_3^2+\|q\|_2^2\le\varepsilon\|u_{xxt}\|_0^2\cr
&\quad+c(\|u_t\|_0^2+\|u\|_2^2+\|q\|_1^2)+cX_4^2,\cr}
\label{4.52}
\end{equation}
where
\begin{equation}
\eqal{
X_4^2&=\varphi_1\|f_s\|_1^2+A_1\|g\|_1^2+(1+A_1)A_2\varphi_1+(1+A_2)
\varphi_1\Phi_1\cr
&\quad+(A_1+\varphi_1)^2\varphi_1+A_1(1+\varphi_1)\varphi_1.\cr}
\label{4.53}
\end{equation}
\end{lemma}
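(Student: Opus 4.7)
The plan is to mimic the strategy of Lemma~\ref{l4.2} one derivative higher. Working in the localized coordinates introduced in Section~\ref{s2}, I would first apply $\partial_\tau^2$ (two tangential derivatives) to equations (\ref{2.5}) and (\ref{2.6}), test the results against $\tilde u_{\tau\tau}$ and $\tilde q_{\tau\tau}$ respectively, add them, and use the transformed continuity equations $(\ref{1.3})_2$ and (\ref{2.2}) to extract the energy identity
$$
\tfrac{d}{dt}\!\intop_{\hat\Omega}\!\bigl(\hat\varrho_s\tilde u_{\tau\tau}^2+\tfrac{1}{\varkappa\hat p_s}\tilde q_{\tau\tau}^2\bigr)dz+\|\tilde u_{z\tau\tau}\|_0^2+\|\divv\tilde u_{\tau\tau}\|_0^2\le\text{(r.h.s.)}.
$$
The right-hand side splits into (i) commutator terms coming from $(\hat\varrho_s,\hat v_s,\hat p_s)_{,\tau\tau}$ paired with lower-order derivatives of $(\tilde u,\tilde q)$, estimated by H\"older, Young and interpolation so as to produce $\hat A_2(1+\hat A_2)\tilde\varphi_1$ and absorbable $\varepsilon$-contributions of $\|\tilde u_{z\tau\tau}\|_0$; (ii) the derivatives of $\tilde{\bar f},\tilde{\bar h}$, which by (\ref{2.11})--(\ref{2.12}) yield terms of the form $\varphi_1\|\tilde f_s\|_1^2+\|\tilde g\|_1^2$, $\hat A_2\tilde\varphi_1\Phi_1$ and $(1+\hat A_2+\tilde\varphi_1)\tilde\varphi_1\Phi_1$; (iii) the localization errors $k_1,k_2$ handled exactly as in (\ref{4.18})--(\ref{4.19}), giving $c\lambda$ times the top-order norms plus lower order.

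Next I would use (\ref{2.13}) differentiated once tangentially to recover the normal derivative $\tilde q_{n\tau}$, and (\ref{2.14}) differentiated once tangentially to recover $\divv\tilde u_{n\tau}$; taking $L_2$ norms, the same interpolations give bounds in terms of $\|\tilde u_{z\tau\tau}\|_0^2$, $\|\tilde u_t\|_1^2$ and the same right-hand side as in step~(i)--(iii), plus a $c\lambda\|\tilde u_{zzz}\|_0^2+c\lambda\|\tilde q_{zz}\|_0^2$ piece. Summing these three inequalities (and doing the analogous interior estimate using $\zeta$ as a cut-off), passing back to $x$-coordinates and summing over the partition of unity, absorbs the $c\lambda$-terms into the left-hand side for $\lambda$ small and gives an inequality controlling $\tfrac{d}{dt}\!\intop_\Omega(\varrho_s u_{\tau\tau}^2+\tfrac{1}{p_s}q_{\tau\tau}^2)dx$ together with $\|u_{x\tau\tau}\|_0^2+\|\divv u\|_2^2+\|q_n\|_1^2$ by the right-hand side of (\ref{4.52}).

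To promote $u_{\tau\tau}$ to the full second derivative $u_{xx}$ under the time derivative, and to promote $\|\divv u\|_2^2+\|q_n\|_1^2$ to full $\|u\|_3^2+\|q\|_2^2$, I would test (\ref{1.5}) with $A\Delta u$ (or equivalently apply $\partial_x$ to the Stokes system (\ref{4.7}) and use the elliptic estimate (\ref{4.31}) at one derivative higher). This step mirrors (\ref{4.37})--(\ref{4.39}): integrating by parts with (\ref{1.3})$_2$ as in Lemma~\ref{l4.2} turns the principal material derivative into $\tfrac12\tfrac{d}{dt}\intop_\Omega\varrho_s|A^{1/2}u_x|^2dx$, while the $\nabla q$ term is controlled by $\|q\|_2^2$ already on the left, and the $\bar f$ term is bounded via $\|\bar f\|_1^2\le cX_4^2$, which one verifies from the explicit form of $\bar f$ in (\ref{1.5}) using $\varphi\cdot\Phi$-splitting (here the factor $(1+A_2)\varphi_1\Phi_1$ in (\ref{4.53}) enters, since $\eta u_t$ and $\eta u\cdot\nabla u$ produce the top-order $\Phi_1$ with a small-prefactor $\varphi_1$). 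Equation (\ref{1.10}) then gives $\|q\|_2^2$ in terms of $\|u\|_3^2$ plus lower-order, closing the estimate.

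The main obstacle I anticipate is the careful bookkeeping of the cubic nonlinear terms of type $\eta\, u\cdot\nabla u$ and $\eta\,u_t$ at second derivative level: after two differentiations these produce a term with three factors carrying three derivatives, and to keep the dependence on $\Phi_1$ \emph{linear} with the small coefficient $\varphi_1$ (as displayed in (\ref{4.53})) one must always place the top-order derivative on the single factor from $\{u,q\}$ and keep the remaining factors in $\varphi_1$ via Sobolev embedding $H^2\hookrightarrow L_\infty$. Any carelessness will produce a $\Phi_1^2$ term, which would not be absorbable later in the differential inequality of Section~\ref{s4}. The $\varepsilon\|u_{xxt}\|_0^2$ tolerance on the right-hand side is what allows us to use (\ref{3.29}) to estimate $u_{xxt}$ in terms of $\|u\|_3^2$ plus lower order without losing control.
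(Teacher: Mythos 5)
Your outline follows the paper's proof closely for most of the argument: applying $\partial_\tau^2$ to (\ref{2.5})--(\ref{2.6}) and testing against $\tilde u_{\tau\tau},\tilde q_{\tau\tau}$ to get the weighted energy identity (this is (\ref{4.54})--(\ref{4.60}) in the paper), recovering the normal derivative $\tilde q_{n\tau}$ via the third component of (\ref{2.13}) and $\divv\tilde u_{n\tau}$ via (\ref{2.14}) (this is (\ref{4.61})--(\ref{4.63})), using the localized Stokes problem (\ref{4.64}) for the elliptic gain, and then summing over the partition of unity. You do omit the normal-normal derivative recoveries ((\ref{4.67})--(\ref{4.68}), differentiating the third components of (\ref{2.13}),(\ref{2.14}) with respect to $n$), but that is a minor bookkeeping issue easily repaired.

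The genuine gap is in your final step, the promotion of the tangential second derivative $\bar u_{\tau\tau}$ to the full $u_{xx}$ inside the time derivative. You propose to test (\ref{1.5}) with $A\Delta u$, mirroring (\ref{4.37})--(\ref{4.39}), and to recover $\tfrac12\tfrac{d}{dt}\!\intop_\Omega\varrho_s|A^{1/2}u_x|^2dx$ by the integration-by-parts formula of Remark~\ref{r2.1}. That formula, however, requires \emph{both} arguments to lie in $H^2\cap H_0^1$. Testing with $A\Delta u$ means you must move $A$ across $\Delta u$ (or across $u_x$), but $\Delta u$ and $\nabla u$ do not vanish on $S$: only $u$ and $u_t$ do. The integration by parts therefore produces uncontrolled boundary integrals involving $\partial_n u$, $\Delta u$, and the argument does not close. (The paper's (\ref{4.37}) works precisely because the test function is $-Au$ with $u_t|_S=0$; the same trick one order higher breaks down.) What the paper does instead in (\ref{4.72}) is elementary and boundary-safe: it simply computes $\tfrac{d}{dt}\!\intop_\Omega\varrho_s u_{xx}^2dx=2\intop_\Omega\varrho_s u_{xxt}u_{xx}\,dx+\intop_\Omega\varrho_{st}u_{xx}^2dx$ directly and bounds it by $\varepsilon\|u_{xxt}\|_0^2+\varepsilon\|u_{xx}\|_1^2+c/\varepsilon(\|\varrho_s\|_2^2+\|\varrho_{st}\|_1^2)\|u_{xx}\|_0^2$, with the $\varepsilon\|u_{xx}\|_1^2$ absorbed by the $\|u\|_3^2$ already obtained on the left and the $\varepsilon\|u_{xxt}\|_0^2$ carried forward into (\ref{4.52}). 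This is also where the $\varepsilon\|u_{xxt}\|_0^2$ tolerance in the statement actually comes from; it is not, as you suggest, there to permit re-expressing $u_{xxt}$ through (\ref{3.29}) — it is absorbed later by the $\|u_t\|_2^2$ dissipation of Lemma~\ref{l4.3}.
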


\begin{proof}
Differentiating (\ref{2.5}) and (\ref{2.6}) twice with respect to $\tau$,
multiplying by $\tilde u_{\tau\tau}$ and $\tilde q_{\tau\tau}$, respectively,
integrating the results over $\hat\Omega$, adding and integrating by parts,
we get
\begin{equation}
\eqal{
&{1\over2}{d\over dt}\intop_{\hat\Omega}\bigg(\hat\varrho_s
\tilde u_{\tau\tau}^2+{1\over p_s}\tilde q_{\tau\tau}^2\bigg)dz+
\|\tilde u_{\tau\tau}\|_1^2+\|\divv\tilde u_{\tau\tau}\|_0^2\cr
&\le\intop_{\hat\Omega}(\hat\varrho_{s\tau\tau}\tilde u_t+\hat\varrho_{s\tau}
\tilde u_{\tau t})\cdot\tilde u_{\tau\tau}dz+\intop_{\hat\Omega}
((\hat\varrho_s\hat v_s)_{,\tau\tau}\cdot\hat\nabla\tilde u+
(\hat\varrho_s\hat v_s)_{,\tau}\hat\nabla\tilde u_\tau)\tilde u_{\tau\tau}dz\cr
&\quad+\intop_{\hat\Omega}\bigg[\bigg({1\over\hat p_s}\bigg)_{,\tau\tau}
\tilde q_t+\bigg({1\over\hat p_s}\bigg)_{,\tau}\tilde q_{\tau t}\bigg]
\tilde q_{\tau\tau}dz\cr
&\quad+\intop_{\hat\Omega}\bigg[\bigg({\hat v_s\over\hat p_s}\bigg)_{,\tau\tau}
\cdot\hat\nabla\tilde q+\bigg({\hat v_s\over\hat p_s}\bigg)_{,\tau}\cdot
\hat\nabla\tilde q_\tau\bigg]\tilde q_{\tau\tau}dz\cr
&\quad+\intop_{\hat\Omega}{(\varkappa+1)\varkappa\over\hat p_s}\hat\divv
\hat v_s\tilde q_{\tau\tau}^2dz+\intop_{\hat\Omega}\tilde{\bar f}_{,\tau\tau}
\cdot\tilde u_{\tau\tau}dz+\intop_{\hat\Omega}\tilde{\bar h}_{,\tau\tau}\cdot
\tilde u_{\tau\tau}dz\cr
&\quad+\intop_{\hat\Omega}k_{1,\tau\tau}\tilde u_{\tau\tau}dz+
\intop_{\hat\Omega}k_{2,\tau\tau}\tilde q_{,\tau\tau}dz.\cr}
\label{4.54}
\end{equation}
Now we estimate the terms from the r.h.s. of (\ref{4.54}). We bound the first
term by
$$
\varepsilon_1\|\tilde u_{\tau\tau}\|_1^2+c/\varepsilon
\|\hat\varrho_{s\tau}\|_{1,\hat\Omega}^2\|\tilde u_t\|_1^2,
$$
the second by
$$
\varepsilon_2\|\tilde u_{\tau\tau}\|_1^2+c\varepsilon_2
\|\hat\varrho_s\|_{2,\hat\Omega}^2\|\hat v_s\|_{2,\hat\Omega}^2
\|\tilde u\|_2^2,
$$
the third by
$$
\varepsilon_3\|\tilde q_{\tau\tau}\|_0^2+c/\varepsilon_3
(1+\|\hat p_s\|_{2,\hat\Omega}^2)\|\hat p_s\|_{3,\hat\Omega}^2\|\tilde q_t\|_1^2,
$$
the fourth by
$$
\varepsilon_4\|\tilde q_{\tau\tau}\|_0^2+c/\varepsilon_4
(\|\hat v_s\|_{3,\hat\Omega}^2+\|\hat v_s\|_{2,\hat\Omega}^2
\|\hat p_s\|_{3,\hat\Omega}^2+\|\hat v_s\|_{2,\hat\Omega}^2
\|\hat p_s\|_{2,\hat\Omega}^2)\|\tilde q\|_2^2,
$$
the fifth by
$$
\varepsilon_5\|\tilde q_{\tau\tau}\|_0^2+c/\varepsilon_5
\|\hat v_s\|_{3,\hat\Omega}^2\|\tilde q_{\tau\tau}\|_0^2.
$$
Next we examine
$$
\bigg|\intop_{\hat\Omega}\tilde{\bar f}_{,\tau\tau}\cdot\tilde u_{\tau\tau}
dz\bigg|=\bigg|\intop_{\hat\Omega}\tilde{\bar f}_{,\tau}\tilde u_{\tau\tau\tau}
dz\le\varepsilon_6\|\tilde u_{\tau\tau\tau}\|_0^2+c/\varepsilon_6
\|\tilde{\bar f}\|_1^2
$$
and
$$\eqal{
&\bigg|\intop_{\hat\Omega}k_{1,\tau\tau}\tilde u_{\tau\tau}dz\bigg|=
\bigg|\intop_{\hat\Omega}k_{1,\tau}\tilde u_{\tau\tau\tau}dz\bigg|\le
\varepsilon_7\|\tilde u_{\tau\tau\tau}\|_0^2\cr
&\quad+{c\over\varepsilon_7}\lambda(\|\tilde u\|_3^2+\|\tilde q\|_2^2)+c
(\|\hat u\|_{2,\hat\Omega}^2+\|\hat q\|_{1,\hat\Omega}^2).\cr}
$$
Using the above estimates in (\ref{4.54}) and assuming that
$\varepsilon_1-\varepsilon_7$ are sufficiently small, we get
\begin{equation}
\eqal{
&{d\over dt}\intop_{\hat\Omega}\bigg(\hat\varrho_s\tilde u_{\tau\tau}^2+
{1\over\hat p_s}\tilde q_{\tau\tau}^2\bigg)dz+\|\tilde u_{\tau\tau}\|_1^2+
\|\divv\tilde u_{\tau\tau}\|_0^2\cr
&\le\varepsilon\|\tilde q_{\tau\tau}\|_0^2+{c\over\varepsilon}\lambda
(\|\tilde u\|_3^2+\|\tilde q\|_2^2)+{c\over\varepsilon}
(\|\hat u\|_{2,\hat\Omega}^2+\|\hat q\|_{1,\hat\Omega}^2)\cr
&\quad+{c\over\varepsilon}
[\|\hat\varrho_s\|_{2,\hat\Omega}^2(1+\|\hat v_s\|_{2,\hat\Omega}^2)+(1+\|\hat p_s\|_{2,\hat\Omega}^2)\|\hat p_s\|_{3,\hat\Omega}^2+\|\hat v_s\|_{3,\hat\Omega}^2\cr
&\quad+(\|\hat p_s\|_{3,\hat\Omega}^2+
\|\hat p_s\|_{2,\hat\Omega}^4)\|\hat v_s\|_{2,\hat\Omega}^2]
(|\tilde u|_{2,1}^2+|\tilde q|_{2,1}^2)\cr
&\quad+{c\over\varepsilon}\|\tilde{\bar f}\|_1^2+c\bigg|\intop_{\hat\Omega}
\tilde{\bar h}_{\tau\tau}\cdot\tilde u_{\tau\tau}dz\bigg|+c
\bigg|\intop_{\hat\Omega}k_{2,\tau\tau}\tilde q_{\tau\tau}dz\bigg|.\cr}
\label{4.55}
\end{equation}
Finally, we have to estimate the last two terms on the r.h.s. of (\ref{4.55}). First we examine
$$\eqal{
&\bigg|\intop_{\hat\Omega}\tilde{\bar h}_{,\tau\tau}\tilde q_{\tau\tau}dz\bigg|
\le\bigg|\intop_{\hat\Omega}\bigg({\hat u\over\hat p_s}\hat\nabla
(\hat p_s+\hat q)\hat\zeta\bigg)_{,\tau\tau}\tilde q_{\tau\tau}dz\bigg|\cr
&\quad+\bigg|\intop_{\hat\Omega}\bigg({\hat q\over\hat p_s}\hat\divv
(\hat v_s+\hat u)\hat\zeta\bigg)_{,\tau\tau}\tilde q_{\tau\tau}dz\bigg|
\equiv I_1+I_2.\cr}
$$
Continuing,
$$\eqal{
I_1&\le\bigg|\intop_{\hat\Omega}\bigg({\hat u\over\hat p_s}\hat\nabla\hat p_s
\hat\zeta\bigg)_{,\tau\tau}\tilde q_{\tau\tau}dz\bigg|+
\bigg|\intop_{\hat\Omega}\bigg({\hat u\over\hat p_s}\hat\nabla
\tilde q\bigg)_{,\tau\tau}\tilde q_{\tau\tau}dz\bigg|\cr
&\quad+\bigg|\intop_{\hat\Omega}\bigg({\hat u\over\hat p_s}\hat\nabla
\hat\zeta\hat q\bigg)_{,\tau\tau}\tilde q_{\tau\tau}dz\bigg|\equiv
I_1^1+I_1^2+I_1^3.\cr}
$$
First, we estimate
$$
I_1^1\le\varepsilon\|\tilde q_{\tau\tau}\|_0^2+c/\varepsilon
\|\hat p_s\|_{3,\hat\Omega}^2(1+\|\hat p_s\|_{2,\hat\Omega}^2)\|\tilde u\|_2^2.
$$
Next, we examine
$$\eqal{
I_1^2&=\bigg|\intop_{\hat\Omega}\bigg({\hat u\over\hat p_s}\bigg)_{,\tau\tau}
\hat\nabla\tilde q\tilde q_{\tau\tau}\bigg|+\bigg|\intop_{\hat\Omega}
\bigg({\hat u\over\hat p_s}\bigg)_{,\tau}(\hat\nabla\tilde q)_{,\tau}
\tilde q_{\tau\tau}\bigg|+\bigg|\intop_{\hat\Omega}{\hat u\over\hat p_s}
\hat\nabla\tilde q_{\tau\tau}\tilde q_{\tau\tau}dz\bigg|\cr
&\equiv J_1+J_2+J_3,\cr}
$$
where
$$
J_1+J_2\le\varepsilon\|\tilde q_{\tau\tau}\|_0^2+c/\varepsilon
(1+\|\hat p_s\|_{3,\hat\Omega}^2)\|\hat u\|_{3,\hat\Omega}^2\|\tilde q\|_2^2.
$$
Finally, integrating by parts in $J_3$ yields
$$
J_3=\bigg|{1\over2}\intop_{\hat\Omega}\hat\nabla\bigg({\hat u\over\hat p_s}\bigg)
\tilde q_{\tau\tau}^2dz\bigg|\le\varepsilon\|\tilde q_{\tau\tau}\|_0^2+
{c\over\varepsilon}(1+\|\hat p_s\|_{3,\hat\Omega}^2)\|\hat u\|_{3,\hat\Omega}^2
\|\tilde q_{\tau\tau}\|_0^2.
$$
Continuing,
$$
I_1^3\le\varepsilon\|\tilde q_{\tau\tau}\|_0^2+c/\varepsilon
(1+\|\hat p_s\|_{2,\hat\Omega}^2)\|\hat u\|_{2,\hat\Omega}^2
\|\hat q\|_{2,\hat\Omega}^2.
$$
Similarly, we have
$$
I_2\le\varepsilon\|\tilde q_{\tau\tau}\|_0^2+c/\varepsilon
(1+\|\hat p_s\|_{2,\hat\Omega}^2)(\|\hat v_s\|_{3,\hat\Omega}^2+
\|\hat u\|_{3,\hat\Omega}^2)\|\hat q\|_{2,\hat\Omega}^2.
$$
Summarizing the above considerations, one has
\begin{equation}
\eqal{
&\bigg|\intop_{\hat\Omega}\tilde{\bar h}_{\tau\tau}\tilde q_{\tau\tau}
dz\bigg|\le\varepsilon\|\tilde q_{\tau\tau}\|_0^2+c/\varepsilon
[(1+\|\hat p_s\|_{2,\hat\Omega}^2)(\|\hat p_s\|_{3,\hat\Omega}^2
\|\hat u\|_{2,\hat\Omega}^2\cr
&\quad+\|\hat v_s\|_{3,\hat\Omega}^2\|\hat q\|_{2,\hat\Omega}^2)+
(1+\|\hat p_s\|_{3,\hat\Omega}^2)\|\hat q\|_{2,\hat\Omega}^2
\|\hat u\|_{3,\hat\Omega}^2]\cr
&\le\varepsilon\|\tilde q_{\tau\tau}\|_0^2+{c\over\varepsilon}[(1+\hat A_1)
\hat A_2\hat\varphi_1+(1+\hat A_2)\hat\varphi_1\hat\Phi_1].\cr}
\label{4.56}
\end{equation}
Finally,
\begin{equation}
\eqal{
&\bigg|\intop_{\hat\Omega}k_{2,\tau\tau}\tilde q_{\tau\tau}dz\bigg|\le
\varepsilon\|\tilde q_{\tau\tau}\|_0^2+{c\over\varepsilon}\lambda
\|\tilde u\|_3^2+c\|\hat u\|_{2,\hat\Omega}^2\cr
&\quad+{c\over\varepsilon}(1+\|\hat p_s\|_{2,\hat\Omega}^2)
\|\hat v_s\|_{2,\hat\Omega}^2\|\hat q\|_{2,\hat\Omega}^2,\cr}
\label{4.57}
\end{equation}
where the last expression is bounded by ${c\over\varepsilon}(1+\hat A_1)\hat A_1\hat\varphi_1$.

\noindent
Employing (\ref{4.56}) and (\ref{4.57}) in (\ref{4.55}) yields
\begin{equation}
\eqal{
&{d\over dt}\intop_\Omega\bigg(\hat\varrho_s\tilde u_{\tau\tau}^2+
{1\over\hat p_s}\tilde q_{\tau\tau}^2\bigg)dz+\|\tilde u_{\tau\tau}\|_1^2+
\|\divv\tilde u_{\tau\tau}\|_0^2\cr
&\le\varepsilon\|\tilde q_{\tau\tau}\|_0^2+{c\over\varepsilon}\lambda
(\|\tilde u\|_3^2+\|\tilde q\|_2^2)+{c\over\varepsilon}
(\|\hat u\|_{2,\hat\Omega}^2+\|\hat q\|_{1,\hat\Omega}^2)\cr
&\quad+{c\over\varepsilon}[\|\hat\varrho_s\|_{2,\hat\Omega}^2
(1+\|\hat v_s\|_{2,\hat\Omega}^2)+(1+\|\hat p_s\|_{2,\hat\Omega})
\|\hat p_s\|_{3,\hat\Omega}^2+\|\hat v_s\|_{3,\hat\Omega}^2\cr
&\quad+(\|\hat p_s\|_{3,\hat\Omega}^2+\|\hat p_s\|_{2,\hat\Omega}^4)
\|\hat v_s\|_{2,\hat\Omega}^2](|\tilde u|_{2,1}^2+|\tilde q|_{2,1}^2)\cr
&\quad+{c\over\varepsilon}(1+\|\hat p_s\|_{2,\hat\Omega}^2)
(\|\hat p_s\|_{3,\hat\Omega}^2\|\hat u\|_{2,\hat\Omega}^2+
\|\hat v_s\|_{3,\hat\Omega}^2\|\hat q\|_{2,\hat\Omega}^2)\cr
&\quad+{c\over\varepsilon}(1+\|\hat p_s\|_{3,\hat\Omega}^2)
\|\hat q\|_{2,\hat\Omega}^2\|\hat u\|_{3,\hat\Omega}^2+{c\over\varepsilon}
\|\tilde{\bar f}\|_1^2.\cr}
\label{4.58}
\end{equation}
From (\ref{2.11}) we have
\begin{equation}
\eqal{
\|\tilde{\bar f}\|_1^2&\le c[\|\hat\eta\|_{2,\hat\Omega}^2\|\tilde f_s\|_1^2+
\|\hat\varrho_s\|_{2,\hat\Omega}^2\|\tilde g\|_1^2+\|\hat\eta\|_{2,\hat\Omega}^2
\|\tilde v_{st}\|_1^2\cr
&\quad+\|\hat\eta\|_{2,\hat\Omega}^2\|\tilde u_t\|_1^2+
(\|\hat v_s\|_{2,\hat\Omega}^2+\|\hat u\|_{2,\hat\Omega}^2)
(\|\hat v_s\|_{2,\hat\Omega}^2\|\hat\eta\|_{2,\hat\Omega}^2\cr
&\quad+\|\hat\eta\|_{2,\hat\Omega}^2\|\hat u\|_{2,\hat\Omega}^2+
\|\hat\varrho_s\|_{2,\hat\Omega}^2\|\hat u\|_{2,\hat\Omega}^2)]\le c
[\hat\varphi_1\|\tilde f_s\|_1^2+\hat A_1\|g\|_1^2\cr
&\quad+\hat A_1(1+\hat\varphi_1)\hat\varphi_1+(\hat A_1+\hat\varphi_1)^2
\hat\varphi_1].\cr}
\label{4.59}
\end{equation}
In view of (\ref{4.59}) and notation (\ref{2.15}) inequality (\ref{4.58})
takes the form
\begin{equation}
\eqal{
&{d\over dt}\bigg\|\sqrt{\hat\varrho_s}\tilde u_{\tau\tau},
{1\over\sqrt{\hat p_s}}\tilde q_{\tau\tau}\bigg\|_0^2+
\|\tilde u_{\tau\tau}\|_1^2+\|\divv\tilde u_{\tau\tau}\|_0^2\cr
&\le\varepsilon\|\tilde q_{\tau\tau}\|_0^2+{c\over\varepsilon}\lambda
(\|\tilde u\|_3^2+\|\tilde q\|_2^2)+{c\over\varepsilon}
(\|\hat u\|_{2,\hat\Omega}^2+\|\hat q\|_{1,\hat\Omega}^2)\cr
&\quad+{c\over\varepsilon}[(1+\hat A_1)\hat A_2\hat\varphi_1+(1+\hat A_2)
\hat\varphi_1\hat\Phi_1+\hat A_1\hat\varphi_1+\hat A_1\hat\varphi_1^2+
\hat A_1^3\hat\varphi_1\cr
&\quad+(\hat A_1+\hat\varphi_1)^2\hat\varphi_1+\hat\varphi_1\|\tilde f_s\|_1^2+
\hat A_1\|\tilde g\|_1^2].\cr}
\label{4.60}
\end{equation}
Differentiating the third component of (\ref{2.13}) with respect to $\tau$,
multiplying the result by $\tilde q_{n\tau}$ and integrating over $\hat\Omega$,
one obtains
\begin{equation}
\eqal{
&{(\mu+\nu)\over\varkappa}\intop_{\hat\Omega}\bigg(\partial_n{1\over\hat p_s}
\tilde q_t\bigg)_{,\tau}\tilde q_{n\tau}dz+{\mu+\nu\over\varkappa}
\intop_{\hat\Omega}\partial_n\bigg({1\over\hat p_s}\hat v_s\cdot\hat\nabla
\tilde q\bigg)_{,\tau}\tilde q_{n\tau}dz\cr
&\quad+\|\tilde q_{n\tau}\|_0^2\le c(\|\tilde u_{z\tau\tau}\|_0^2+
\|\tilde u_{\tau t}\|_0^2)+c(\|(\hat\varrho_s\hat v_s\cdot\hat\nabla\tilde u)_{,\tau}\|_0^2+
\|\hat\varrho_{s\tau}\tilde u_t\|_0^2\cr
&\quad+\|\tilde{\bar f}\|_1^2+\|k_{1,\tau}\|_0^2)+(\mu+\nu)\intop_{\hat\Omega}
(\tilde h_{n\tau}\tilde q_{n\tau}+k_{2n\tau}\tilde q_{n\tau})dz.\cr}
\label{4.61}
\end{equation}
Now, we examine the particular terms in (\ref{4.61}).

\noindent
We express the first two terms on the l.h.s. of (\ref{4.61}) in the form
$$\eqal{
&{\mu+\nu\over\varkappa}\intop_{\hat\Omega}\bigg({1\over\hat p_s}
\tilde q_{n\tau,t}+{1\over\hat p_s}\hat v_s\cdot\hat\nabla
\tilde q_{n\tau}\bigg)\tilde q_{n\tau}dz\cr
&\quad+{\mu+\nu\over\varkappa}\intop_{\hat\Omega}\bigg(\partial_{n\tau}^2
\bigg({1\over\hat p_s}\bigg)\tilde q_t+\partial_{n\tau}^2\bigg({1\over\hat p_s}
\hat v_s\bigg)\cdot\hat\nabla\tilde q\bigg)\tilde q_{n\tau}dz\cr
&\quad+{\mu+\nu\over\varkappa}\intop_{\hat\Omega}\bigg(\partial_z
\bigg({1\over\hat p_s}\bigg)\tilde q_{zt}+\partial_z\bigg({1\over\hat p_s}
\hat v_s\bigg)\partial_z\hat\nabla\tilde q\bigg)\tilde q_{n\tau}dz\cr
&\equiv I_1+I_2+I_3,\cr}
$$
where in $I_3$ the derivatives with respect to $n$ and $\tau$ are replaced by
derivatives with respect to $z$.

\noindent
Employing equation (\ref{2.2}) transformed to variables $z$ in $I_1$, yields
$$
I_1={\mu+\nu\over2\varkappa}{d\over dt}\intop_{\hat\Omega}{1\over\hat p_s}
\tilde q_{n\tau}^2dz-{(\mu+\nu)(\varkappa+1)\over2}\intop_{\hat\Omega}
{1\over\hat p_s}\hat{\divv}\hat v_s\tilde q_{n\tau}^2dz,
$$
where the second integral is bounded by
$$
\varepsilon\|\tilde q_{n\tau}\|_0^2+c/\varepsilon\hat A_2\|\tilde q_{n\tau}\|_0^2.
$$
Applying the H\"older and the Young inequalities we get
$$\eqal{
&|I_2|\le\varepsilon\|\tilde q_{n\tau}\|_0^2+{c\over\varepsilon}\hat A_1
(1+\hat A_1)\hat A_2\tilde\varphi_1,\cr
&|I_3|\le\varepsilon\|\tilde q_{n\tau}\|_0^2+{c\over\varepsilon}
(1+\hat A_1)\hat A_2\tilde\varphi_1.\cr}
$$
Next,
$$
\|(\hat\varrho_s\hat v_s\hat\nabla\tilde u)_{,\tau}\|_0^2\le c
\|\hat\varrho_s\|_{2,\hat\Omega}^2\|\hat v_s\|_{2,\hat\Omega}^2
\|\tilde u\|_2^2\le c\hat A_1^2\tilde\varphi_1,
$$
$\|\tilde{\bar f}\|_1^2$ is bounded by (\ref{4.59}),
$$
\|k_{1,\tau}\|_0^2\le c\lambda(\|\tilde u_{zzz}\|_0^2+\|\tilde q_{zz}\|_0^2)+c
(\|\hat u\|_{2,\hat\Omega}^2+\|\hat q\|_{1,\hat\Omega}^2).
$$
Similarly, as in the case of estimates (\ref{4.56}) and (\ref{4.57}) we have
$$
\bigg|\intop_{\hat\Omega}\tilde h_{n\tau}\tilde q_{n\tau}dz\bigg|\le\varepsilon
\|\tilde q_{n\tau}\|_0^2+c/\varepsilon[(1+\hat A_1)\hat A_2\hat\varphi_1+
(1+\hat A_2)\hat\varphi_1\hat\Phi_1]
$$
and
$$\eqal{
&\bigg|\intop_{\hat\Omega}k_{2,n\tau}\tilde q_{n\tau}dz\bigg|\le\varepsilon
\|\tilde q_{n\tau}\|_0^2+{c\over\varepsilon}\lambda\|\tilde u\|_3^2+c
\|\hat u\|_{2,\hat\Omega}^2+{c\over\varepsilon}(1+\hat A_1)\hat A_1\hat\varphi_1.\cr}
$$
Employing the above estimates in (\ref{4.61}), one gets
\begin{equation}
\eqal{
&{d\over dt}\intop_{\hat\Omega}{1\over\hat p_s}\tilde q_{n\tau}^2dz+
\|\tilde q_{n\tau}\|_0^2\le c(\|\tilde u_{z\tau\tau}\|_0^2+
\|\tilde u_{\tau t}\|_0^2)\cr
&\quad+c\lambda(\|\tilde u_{zzz}\|_0^2+\|\tilde q_{zz}\|_0^2)+c
(\|\hat u\|_{2,\hat\Omega}^2+\|\hat q\|_{1,\hat\Omega}^2)+
c\hat\varphi_1\|\tilde f_s\|_1^2\cr
&\quad+c\hat A_1\|\tilde g\|_1^2+c[(1+\hat A_1)^2\hat A_2\hat\varphi_1+\hat A_1^2\tilde\varphi_1+\hat A_1
\hat\varphi_1^2+(1+\hat A_2)\hat\varphi_1\hat\Phi_1].\cr}
\label{4.62}
\end{equation}
Differentiating the third component of (\ref{2.14}) with respect to $\tau$ and
taking the $L_2$-norm yields
\begin{equation}
\eqal{
&\|(\divv\tilde u)_{,n\tau}\|_0^2\le c(\|\tilde u_{z\tau\tau}\|_0^2+
\|\tilde u_{\tau t}\|_0^2+\|\tilde q_{n\tau}\|_0^2)+c\|\hat\varrho_s\|_2^2
\|\tilde u_t\|_1^2\cr
&\quad+c\lambda(\|\tilde u_{zzz}\|_0^2+\|\tilde q_{zz}\|_0^2)+c
(\|\hat u\|_{2,\hat\Omega}^2+\|\hat q\|_{1,\hat\Omega}^2)\cr
&\quad+c[\hat\varphi_1\|\tilde f_s\|_1^2+\hat A_1\|g\|_1^2+
(\hat A_1+\hat\varphi_1)\hat\varphi_1+(\hat A_1+\hat\varphi_1)^2\hat\varphi_1].\cr}
\label{4.63}
\end{equation}
Consider now the following Stokes problem
\begin{equation}
\eqal{
&-\mu\Delta\tilde u+\nabla\tilde q=-\hat\varrho_s\tilde u_t+\nu\nabla\divv
\tilde u+\tilde{\bar f}+k_1-\hat\varrho_s\hat v_s\cdot\hat\nabla\tilde u\cr
&\divv\tilde u=\divv\tilde u\quad {\rm in}\ \ \hat\Omega,\cr
&\tilde u|_{\hat S}=0.\cr}
\label{4.64}
\end{equation}
Differentiating (\ref{4.64}) with respect to $\tau$, we obtain
\begin{equation}
\eqal{
&\|\tilde u_\tau\|_2^2+\|\tilde q_\tau\|_1^2\le c(\|\tilde u_{\tau t}\|_0^2+
\|\divv\tilde u_\tau\|_1^2+\|\tilde{\bar f}\|_1^2\cr
&\quad+\|\hat\varrho_{s\tau}\tilde u_t\|_0^2+\|k_{1,\tau}\|_0^2+
\|(\hat\varrho_s\hat v_s\hat\nabla\tilde u)_{,\tau}\|_0^2)\cr
&\le c(\|\tilde u_{\tau t}\|_0^2+\|\divv\tilde u_\tau\|_1^2)+c\lambda
(\|\tilde u_{zzz}\|_0^2+\|\tilde q_{zz}\|_0^2)\cr
&\quad+c(\|\hat u\|_{2,\hat\Omega}^2+\|\hat q\|_{1,\hat\Omega}^2)+c\hat\varphi_1
\|\tilde f_s\|_1^2+c\hat A_1\|\tilde g\|_1^2\cr
&\quad+c\hat A_1(1+\hat A_1)\hat\varphi_1+\hat A_1\hat\varphi_1^2
+c\hat{\varphi}_1^3.\cr}
\label{4.65}
\end{equation}
Adding appropriately (\ref{4.60}), (\ref{4.62}), (\ref{4.63}) and (\ref{4.65})
we have
\begin{equation}
\eqal{
&{d\over dt}\bigg\|\sqrt{\hat\varrho_s}\tilde u_{\tau\tau},
{1\over\sqrt{\hat p_s}}\tilde q_{\tau\tau},{1\over\sqrt{\hat p_s}}
\tilde q_{n\tau}\|_0^2+\|\tilde u_\tau\|_2^2+\|\tilde q_\tau\|_1^2\cr
&\le c\|\tilde u_t\|_1^2+c\lambda(\|\tilde u_{zzz}\|_0^2+\|\tilde q_{zz}\|_0^2)
+c(\|\hat u\|_{2,\hat\Omega}+\|\hat q\|_{1,\hat\Omega}^2)\cr
&\quad+c\hat\varphi_1\|\tilde f_s\|_1^2+c\hat A_1\|\tilde g\|_1^2+c
[(1+\hat A_1)\hat A_2\hat\varphi_1+(1+\hat A_2)\hat\varphi_1\hat\Phi_1\cr
&\quad+(\hat A_1+\hat\varphi_1)^2\hat\varphi_1+\hat A_1(1+\hat\varphi_1)
\hat\varphi_1]\cr
&\equiv c\|\tilde u_t\|_1^2+c\lambda(\|\tilde u_{zzz}\|_0^2+
\|\tilde q_{zz}\|_0^2)+c(\|\hat u\|_{2,\hat\Omega}^2+
\|\hat q\|_{1,\hat\Omega}^2)\cr
&\quad+c\hat X_4^2.\cr}
\label{4.66}
\end{equation}
Differentiating the third component of (\ref{2.13}) with respect to $n$,
multiplying the result by $\tilde q_{nn}$ and integrating over $\hat\Omega$,
one derives
\begin{equation}
\eqal{
&{d\over dt}\bigg\|{1\over\sqrt{\hat p_s}}\tilde q_{nn}\|_0^2+
\|\tilde q_{nn}\|_0^2\le c(\|\tilde u_t\|_1^2+\|\tilde u_{zz\tau}\|_0^2)\cr
&\quad+c\lambda(\|\tilde u_{zzz}\|_0^2+\|\tilde q_{zz}\|_0^2)+c
(\|\hat u\|_{2,\hat\Omega}^2+\|\hat q\|_{1,\hat\Omega}^2)\cr
&\quad+c\hat X_4^2.\cr}
\label{4.67}
\end{equation}
The $L_2$-norm of the derivative with respect to $n$ of the third component
of (\ref{2.14}) is bounded by
\begin{equation}
\eqal{
&\|(\divv\tilde u)_{,nn}\|_0^2\le c(\|\tilde u_{zz\tau}\|_0^2+
\|\tilde q_{nn}\|_0^2+\|\tilde u_t\|_1^2)\cr
&\quad+c\lambda(\|\tilde u_{zzz}\|_0^2+\|\tilde q_{zz}\|_0^2)+c
(\|\hat u\|_{2,\hat\Omega}^2+\|\hat q\|_{1,\hat\Omega}^2)\cr
&\quad+c\hat X_4^2.\cr}
\label{4.68}
\end{equation}
For solution to problem (\ref{4.64}) we have
\begin{equation}
\eqal{
&\|\tilde u\|_3^2+\|\tilde q\|_2^2\le c(\|\nabla\divv\tilde u\|_1^2+
\|\tilde u_t\|_1^2)\cr
&\quad+c\lambda(\|\tilde u_{zzz}\|_0^2+\|\tilde q_{zz}\|_0^2)+c
(\|\hat u\|_{2,\hat\Omega}^2+\|\hat q\|_{1,\hat\Omega}^2)\cr
&\quad+c\hat X_4^2.\cr}
\label{4.69}
\end{equation}
Adding appropriately (\ref{4.66})--(\ref{4.69}) gives
\begin{equation}
\eqal{
&{d\over dt}\bigg\|\sqrt{\hat\varrho_s}\tilde u_{\tau\tau},
{1\over\sqrt{\hat p_s}}\tilde q_{zz}\bigg\|_0^2+\|\tilde u\|_3^2+
\|\tilde q\|_2^2\cr
&\le c\|\tilde u_t\|_1^2+c\lambda(\|\tilde u_{zzz}\|_0^2+\|\tilde q_{zz}\|_0^2)
+c(\|\hat u\|_{2,\hat\Omega}^2+\|\hat q\|_{1,\hat\Omega}^2)\cr
&\quad+c\hat X_4^2.\cr}
\label{4.70}
\end{equation}
Passing to the old variables $x$ in (\ref{4.70}), deriving an inequality
similar to (\ref{4.70}) in an interior subdomain, summing the inequalities
over all neighborhoods of the partition of unity and assuming that $\lambda$
is sufficiently small, we obtain
\begin{equation}
\eqal{
&{d\over dt}\intop_\Omega\bigg(\varrho_s\bar u_{\tau\tau}^2+{1\over p_s}
q_{xx}\bigg)dx+\|u\|_3^2+\|q\|_2^2\le c(\|u_t\|_1^2\cr
&\quad+\|u\|_2^2+\|q\|_1^2)+cX_4^2,\cr}
\label{4.71}
\end{equation}
where $\bar u_{\tau\tau}$ means that in a~neighborhood of the boundary there
are only tangent derivatives.

\noindent
To derive (\ref{4.52}) from (\ref{4.71}) we need the expression
\begin{equation}
\eqal{
&{d\over dt}\intop_\Omega\varrho_su_{xx}^2dx=2\intop_\Omega\varrho_su_{xxt}u_{xx}dx+\intop_\omega\varrho_{st}u_{xx}^2dx\cr
&\le\varepsilon(\|u_{xxt}\|_0^2+\|u_{xx}\|_1^2)+c/\varepsilon
(\|\varrho_s\|_2^2+\|\varrho_{st}\|_1^2)\|u_{xx}\|_0^2.\cr}
\label{4.72}
\end{equation}
Using (\ref{4.72}) in (\ref{4.71}) and choosing $\varepsilon$ sufficiently
small implies (\ref{4.52}). This concludes the proof.
\end{proof}

\begin{lemma}\label{l4.5}
For sufficiently regular solutions we have
\begin{equation}\eqal{
&{d\over dt}\bigg\|\sqrt{\varrho_s}u,\sqrt{\varrho_s}u_x,\sqrt{\varrho_s}
u_{xx},\sqrt{\varrho}u_t,\sqrt{\varrho}u_{xt},{1\over\sqrt{p_s}}q,
{1\over\sqrt{p_s}}q_t,{1\over\sqrt{p_s}}q_x,{1\over\sqrt{p_s}}q_{xx}\bigg\|_0^2\cr
&\quad+|u|_{3,2}^2+|q|_{2,1}^2\le c(X_0^2+X_3^2+X_4^2),\cr}
\label{4.73}
\end{equation}
where $X_0$ is introduced in (\ref{4.1}), $X_3$ in (\ref{4.46}) and $X_4$ in
(\ref{4.53}).
\end{lemma}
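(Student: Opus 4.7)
The plan is to obtain \eqref{4.73} by combining the two main differential inequalities \eqref{4.45} and \eqref{4.52} from Lemmas~\ref{l4.3} and \ref{l4.4}. The sum of their left-hand sides already reproduces the full energy expression listed in \eqref{4.73} together with the dissipation $|u|_{3,2}^2+|q|_{2,1}^2$, so the proof reduces to bookkeeping: the two bad higher-order terms $\varepsilon\|u_{xxx}\|_0^2$ and $\varepsilon\|u_{xxt}\|_0^2$ appearing on the right must be absorbed into the dissipation on the left, and the lower-order terms $\|u_t\|_0^2,\|u\|_2^2,\|q\|_1^2$ from \eqref{4.52} must be handled similarly.

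Concretely, I would first multiply \eqref{4.52} by a small weight $\delta\in(0,1)$ and add the result to \eqref{4.45}. The combined left-hand side contains exactly the time derivative of the energy written out in \eqref{4.73}, together with the dissipation
$$
\|u,u_t\|_2^2+\|q,q_t\|_1^2+\delta(\|u\|_3^2+\|q\|_2^2),
$$
while the combined right-hand side is bounded by
$$
\varepsilon\|u_{xxx}\|_0^2+\delta\varepsilon\|u_{xxt}\|_0^2+\delta c(\|u_t\|_0^2+\|u\|_2^2+\|q\|_1^2)+\tfrac{c}{\varepsilon}X_3^2+\delta c\,X_4^2.
$$

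Next, I would fix the two smallness parameters in the correct order. First choose $\delta$ so small that the lower-order terms $\delta c(\|u\|_2^2+\|q\|_1^2+\|u_t\|_0^2)$ get absorbed into the corresponding parts of the dissipation on the left (using the trivial bound $\|u_t\|_0\le\|u_t\|_2$); then choose $\varepsilon\ll\delta$ so that $\varepsilon\|u_{xxx}\|_0^2$ is dominated by $\tfrac{\delta}{2}\|u\|_3^2$ and $\delta\varepsilon\|u_{xxt}\|_0^2$ by $\tfrac{1}{2}\|u_t\|_2^2$. After this absorption the right-hand side is bounded by $c(X_3^2+X_4^2)$, which already implies the required inequality; to obtain the $X_0^2$ contribution written in \eqref{4.73} one may simply add \eqref{4.1} from Lemma~\ref{l4.1} with any fixed positive coefficient, whose left-hand side is already contained in the LHS of \eqref{4.73}.

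The only step that actually requires attention is the ordering of $\delta$ and $\varepsilon$: since the coefficient of $\|u\|_3^2$ on the left is only $\delta$, one cannot pick $\varepsilon$ independently of $\delta$. Once this ordering is respected, all remaining constants are absorbed into the generic constant $c$, and \eqref{4.73} follows from the definitions \eqref{4.1}, \eqref{4.46} and \eqref{4.53} of $X_0$, $X_3$ and $X_4$.
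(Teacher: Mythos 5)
Your proof is correct and follows essentially the same strategy as the paper: combine the three differential inequalities \eqref{4.1}, \eqref{4.45}, \eqref{4.52} with appropriate small weights and absorb the higher-order $\varepsilon$-terms and the lower-order error terms into the dissipation. The one small difference in bookkeeping is that the paper first applies interpolation \eqref{2.17} to \eqref{4.52} to produce the intermediate inequality \eqref{4.74}, in which the error terms $\|u_t\|_0^2+\|u\|_2^2+\|q\|_1^2$ on the right of \eqref{4.52} are reduced to $\|u,u_t,q\|_0^2$ by absorbing the high-order part into $\|u\|_3^2+\|q\|_2^2$; these $L_2$-norms are then swallowed by adding \eqref{4.1}, and \eqref{4.45} is added last. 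You instead scale \eqref{4.52} by $\delta$ and absorb its lower-order terms directly into the dissipation of \eqref{4.45}, which is an equally valid alternative and saves the intermediate interpolation step. Your explicit attention to the ordering $\varepsilon\ll\delta$ --- so that $\varepsilon\|u_{xxx}\|_0^2$ is dominated by $\delta\|u\|_3^2$ --- is exactly the point that must not be skipped, and you handle it correctly. You also correctly note that the $X_0^2$ contribution is in some sense optional (it only enlarges the right-hand side), and that adding \eqref{4.1} gives it for free. Minor cosmetic remark: after scaling \eqref{4.52} by $\delta$ the time-derivative of $\|\sqrt{\varrho_s}u_{xx},\tfrac{1}{\sqrt{p_s}}q_{xx}\|_0^2$ carries a $\delta$-weight relative to the other energy terms, so the quantity under ${d\over dt}$ is not literally the one written in \eqref{4.73} but an equivalent weighted version; since $\delta$ is a fixed positive constant this is harmless, but it is worth stating.
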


\begin{proof}
Applying some interpolation inequalities we obtain from (\ref{4.52}) the
inequality
\begin{equation}
\eqal{
&{d\over dt}\bigg\|\sqrt{\varrho_s}u_{xx},{1\over\sqrt{p_s}}q_{xx}\bigg\|_0^2+
\|u\|_3^2+\|q\|_2^2\le\varepsilon\|u_{xxt}\|_0^2\cr
&\quad+c\|u,u_t,q\|_0^2+cX_4^2.\cr}
\label{4.74}
\end{equation}
Adding appropriately (\ref{4.1}) and (\ref{4.74}) yields
\begin{equation}
\eqal{
&{d\over dt}\bigg\|\sqrt{\varrho_s}u,\sqrt{\varrho}u_t,\sqrt{\varrho_s}u_{xx},
{1\over\sqrt{p_s}}q,{1\over\sqrt{p_s}}q_t,{1\over\sqrt{p_s}}q_{xx}\bigg\|_0^2\cr
&\quad+\|u\|_3^2+\|u_t\|_1^2+\|q\|_2^2+\|q_t\|_0^2\le\varepsilon
\|u_{xxt}\|_0^2+c(X_0^2+X_4^2).\cr}
\label{4.75}
\end{equation}
Adding (\ref{4.45}) and (\ref{4.75}) and using that $\varepsilon$ is
sufficiently small we get (\ref{4.73}). This concludes the proof.
\end{proof}

\noindent
Since in (\ref{4.73}) the norm $\big\|{1\over\sqrt{p_s}}q_{xt}\big\|_0$ does
not appear under the time derivative we need

\begin{lemma}\label{l4.6}
For sufficiently regular solutions we have
\begin{equation}
\eqal{
&{d\over dt}\bigg\|{1\over\sqrt{p_s}}q_{xt}^2\bigg\|_0^2\le\varepsilon
\|q_{xt}\|_0^2+c/\varepsilon\|u_{xxt}\|_0^2+c/\varepsilon[[(1+A_1)A_1\cr
&\quad+A_1^2(1+A_1)]\varphi_1+(1+A_1)\varphi_1\Phi_1+
[1+A_4(1+A_1)]\varphi_1^2].\cr}
\label{4.76}
\end{equation}
\end{lemma}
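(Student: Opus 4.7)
The plan is to derive an equation for $q_{xt}$ from (\ref{1.10}) by differentiating it once in $x$ and once in $t$, and then to perform an energy estimate after multiplication by $q_{xt}$. This is the natural analogue, in the full domain and on the tangential/time component, of the estimate for $\tilde q_{n\tau}$ that led to (\ref{4.62}) in Lemma~\ref{l4.4}.

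First, I apply $\partial_x\partial_t$ to (\ref{1.10}) and isolate the principal part in the form
$$
\frac{1}{\varkappa p_s}q_{xtt}+\frac{v_s}{\varkappa p_s}\cdot\nabla q_{xt}+\divv u_{xt}=R,
$$
where $R$ collects all commutator contributions, i.e., every term in which at least one of $\partial_x$, $\partial_t$ falls on the coefficients $1/p_s$ or $v_s/p_s$, together with $\bar h_{xt}$ from (\ref{1.10}). Multiplying by $q_{xt}$ and integrating over $\Omega$, the first two integrals combine (after an integration by parts on the transport term and use of (\ref{2.2})) into
$$
\frac{1}{2}\frac{d}{dt}\intop_\Omega\frac{q_{xt}^2}{\varkappa p_s}\,dx+\frac{\varkappa+1}{2\varkappa}\intop_\Omega\frac{\divv v_s}{p_s}\,q_{xt}^2\,dx,
$$
and the last integral is absorbed into $\varepsilon\|q_{xt}\|_0^2+c/\varepsilon\,A_1\varphi_1$. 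The term $\intop_\Omega\divv u_{xt}\cdot q_{xt}\,dx$ is handled directly by H\"older and Young to produce the $\|u_{xxt}\|_0^2$ contribution listed in (\ref{4.76}).

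It remains to estimate $R$. Commutator terms in which $\partial_x\partial_t$ hits the coefficients $1/p_s$ or $v_s/p_s$ involve $p_{s,xt}$, $p_{s,tt}$, $v_{s,xt}$, $v_{s,tt}$; since $p_{s,tt}$ is controlled via the time-differentiated (\ref{2.2}) in terms of $v_{s,tt}$, they produce bounds involving the full $A_4$ norm, giving rise to the $[1+A_4(1+A_1)]\varphi_1^2$ block in (\ref{4.76}). The term $\bar h_{xt}$, expanding
$\bar h=-\tfrac{u}{\varkappa p_s}\cdot\nabla(p_s+q)-\tfrac{q}{p_s}\divv(v_s+u)$
by Leibniz, yields a sum of products of two factors from $\{u,q,v_s,p_s\}$ and their derivatives; applying H\"older/Young with the embeddings $H^1\hookrightarrow L^6$, $H^2\hookrightarrow L^\infty$ and dispatching derivatives so that at most one factor carries the top-order derivative produces the remaining pieces: $(1+A_1)\varphi_1\Phi_1$ when the top derivative lands on $u$ (giving a third spatial derivative inside $\Phi_1$), $(1+A_1)A_1\varphi_1$ and $A_1^2(1+A_1)\varphi_1$ when the two differentiations fall on $p_s$ or $v_s$, and quadratic $\varphi_1^2$ contributions when they fall on $u$ and $q$.

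The principal obstacle will be the purely combinatorial one of tracking $\bar h_{xt}$: it expands into roughly a dozen terms, and one must allocate derivatives carefully so that (a) at most three spatial or two time derivatives ever land on $v_s$, $\varrho_s$, $p_s$ (so that the bound stays inside $A_1, A_2, A_4$), and (b) the quadratic terms in $(u,q)$ never require more regularity than is available in $\varphi_1$ and $\Phi_1$. Once this accounting is complete, collecting the above estimates and choosing $\varepsilon$ sufficiently small at each application gives (\ref{4.76}).
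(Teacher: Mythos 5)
Your proposal follows the same route as the paper: differentiate (\ref{1.10}) in $x$ and $t$, test against $q_{xt}$, use (\ref{2.2}) together with $v_s\cdot\bar n|_S=0$ to extract $\frac{d}{dt}\int_\Omega\frac{1}{p_s}q_{xt}^2\,dx$ from the principal transport part, bound $\int\divv u_{xt}\,q_{xt}$ by Young to produce the $\|u_{xxt}\|_0^2$ term, and estimate all commutator and $\bar h_{xt}$ contributions by H\"older/Young and Sobolev embeddings, with $\|q_{tt}\|_0$ controlled back through (\ref{1.10}) as in (\ref{4.80}). Your organization lumps into a single remainder $R$ what the paper spells out as $I_1$--$I_4$, $J_1$--$J_5$, $K$, but the decomposition and each estimate are the same, so the argument is correct and essentially identical.
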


\begin{proof}
Differentiate (\ref{1.10}) with respect to $x$ and $t$, multiply the result
by $q_{xt}$ and integrate over $\Omega$. Then we obtain
\begin{equation}
\eqal{
&\intop_\Omega\bigg({1\over p_s}(q_t+v_s\cdot\nabla q)\bigg)_{xt}q_{xt}dx+
\varkappa\intop_\Omega\divv u_{xt}q_{xt}dx\cr
&=-\intop_\Omega\bigg({4\over p_s}\nabla(p_s+q)\bigg)_{xt}q_{xt}dx-\varkappa
\intop_\Omega\bigg({q\over p_s}\divv(v_s+u)\bigg)_{xt}q_{xt}dx\cr
&\equiv J+K.\cr}
\label{4.77}
\end{equation}
The first term on the l.h.s. equals
\begin{equation}
\eqal{
&\intop_\Omega\bigg({1\over p_s}q_{xtt}q_{xt}+{v_s\over p_s}\cdot\nabla q_{xt}q_{xt}\bigg)dx+\intop_\Omega\bigg[\bigg({1\over p_s}\bigg)_{,x}q_{tt}q_{xt}+
\bigg({v_s\over p_s}\bigg)_{,x}q_{xt}^2\bigg]dx\cr
&\quad+\intop_\Omega\bigg[\bigg({1\over p_s}\bigg)_{,t}q_{xt}^2+
\bigg({v_s\over p_s}\bigg)_{,t}q_{xx}q_{xt}\bigg]dx+\intop_\Omega\bigg[
\bigg({1\over p_s}\bigg)_{xt}q_t+\bigg({v_s\over p_s}\bigg)_{,xt}q_x\bigg]
q_{xt}dx\cr
&\equiv\sum_{i=1}^4I_i.\cr}
\label{4.78}
\end{equation}
Now we examine the terms in (\ref{4.78}).
$$
I_1={1\over2}\intop_\Omega\bigg[{1\over p_s}\partial_tq_{xt}^2+{v_s\over p_s}
\cdot\nabla q_{xt}^2\bigg]dx
$$
From (\ref{2.2}) we have
\begin{equation}
{1\over2}\intop_\Omega\bigg[\bigg({1\over p_s}\bigg)_{,t}+\divv
\bigg({v_s\over p_s}\bigg)\bigg]q_{xt}^2dx-(\varkappa+1)\intop_\Omega
{1\over p_s}\divv v_sq_{xt}^2dx=0.
\label{4.79}
\end{equation}
Adding (\ref{4.79}) to $I_1$ and using that $v_s\cdot\bar n|_S=0$ we obtain
$$
I_1={1\over2}{d\over dt}\intop_\Omega{1\over p_s}q_{xt}^2dx-(\varkappa+1)
\intop_\Omega{1\over p_s}\divv v_sq_{xt}^2dx,
$$
where the second integral is bounded by
$$
\varepsilon\|q_{xt}\|_0^2+c/\varepsilon\|v_s\|_3^2\|q_{xt}\|_0^2.
$$
Next
$$\eqal{
|I_2|&\le c\intop_\Omega[|p_{sx}|\,|q_{tt}|\,|q_{xt}|+(|v_{sx}|+|v_s|\,
|p_{sx}|)q_{xt}^2]dx\cr
&\le\varepsilon\|q_{xt}\|_0^2+c/\varepsilon[\|p_s\|_3^2\|q_{tt}\|_0^2+
(\|v_s\|_3^2+\|v_s\|_2^2\|p_s\|_3^2)\|q_{xt}\|_0^2].\cr}
$$
From (\ref{1.10}) we calculate
\begin{equation}
\eqal{
\|q_{tt}\|_0^2&\le\|(v_s\cdot\nabla q)_t\|_0^2+\|(p_s\divv u)_t\|_0^2+
\|(u\cdot\nabla(p_s+q))_t\|_0^2\cr
&\quad+\|(q\divv(v_s+u))_t\|_0^2\le c(A_1+\varphi_1)\varphi_1,\cr}
\label{4.80}
\end{equation}
where we used (\ref{2.15}). Continuing, we have
$$
|I_2|\le\varepsilon\|q_{xt}\|_0^2+c/\varepsilon[A_2(A_1+\varphi_1)\varphi_1+(1+A_1)A_2\varphi_1].
$$
Next, we examine
$$\eqal{
|I_3|&\le c\intop_\Omega[|p_{st}|q_{xt}^2+(|v_{xt}+|v_s|\,|p_{st})|q_{xx}|\,|q_{xt}|]dx\cr
&\le\varepsilon\|q_{xt}\|_0^2+c/\varepsilon(1+A_1)A_3\varphi_1.\cr}
$$
Finally, we calculate
$$\eqal{
|I_4|&\le c\intop_\Omega[(|p_{st}|\,|p_{sx}|+|p_{sxt}|)|q_t|+
(|v_{sxt}|+|v_{sx}|\,|p_{st}|\cr
&\quad+|v_{st}|\,|p_{sx}|+|v_s|\,|p_{st}|\,|p_{sx}|)|q_x|]|q_{xt}|dx\cr
&\le\varepsilon\|q_{xt}\|_0^2+c/\varepsilon[A_1^2+A_1^3+A_3]\varphi_1.\cr}
$$
The second term on the l.h.s. of (\ref{4.77}) is bounded by
$$
c(\|q_{xt}\|_0^2+\|u_{xxt}\|_0^2).
$$
The first term on the r.h.s. of (\ref{4.77}) equals
$$\eqal{
&-\intop_\Omega\bigg({u\over p_s}\cdot\nabla p_s\bigg)_{xt}q_{xt}dx-
\intop_\Omega{u\over p_s}\nabla q_{xt}q_{xt}dx-\intop_\Omega
\bigg({u\over p_s}\bigg)_x\nabla q_{xt}q_{xt}dx\cr
&\quad-\intop_\Omega\bigg({u\over p_s}\bigg)_t\nabla q_xq_{xt}dx-\intop_\Omega
\bigg({u\over p_s}\bigg)_{xt}\cdot\nabla qq_{xt}dx\equiv\sum_{i=1}^5J_i.\cr}
$$
Continuing, we have
$$
|J_1|\le\varepsilon\|q_{xt}\|_0^2+c/\varepsilon[A_1^2(1+A_1)\varphi_1+
(1+A_1)A_3\varphi_1+A_1\Phi_1].
$$
$$
J_2=-{1\over2}\intop_\Omega{u\over p_s}\cdot\nabla q_{xt}^2dx={1\over2}
\intop_\Omega\nabla\bigg({u\over p_s}\bigg)q_{xt}^2dx.
$$
Hence,
$$
|J_2|\le\varepsilon\|q_{xt}\|_0^2+c/\varepsilon(\Phi_1+A_2\varphi_1)\varphi_1.
$$
$J_3$ takes the form
$$
J_3=-\intop_\Omega\bigg({u_x\over p_s}-{u\over p_s^2}p_{sx}\bigg)q_{xt}^2dx
$$
so
$$
|J_3|\le\varepsilon\|q_{xt}\|_0^2+c/\varepsilon(\Phi_1+A_2\varphi_1)\varphi_1
$$
Next
$$
J_4=-\intop_\Omega\bigg({u_t\over p_s}-{u\over p_s^2}p_{st}\bigg)q_{xx}q_{xt}dx.
$$
Hence
$$
|J_4|\le\varepsilon\|q_{xt}\|_0^2+c/\varepsilon(\Phi_1+A_3\varphi_1)\varphi_1.
$$
Finally
$$
|J_5|\le\varepsilon\|q_{xt}\|_0^2+c/\varepsilon[\Phi_1+(A_1+A_1^2+A_3)
\varphi_1]\varphi_1.
$$
Next
$$
|K|\le\varepsilon\|q_{xt}\|_0^2+c/\varepsilon[(1+A_1)^2(A_1+\varphi_1)\varphi_1
+(1+A_1)(A_2+A_3+\Phi_1)\varphi_1].
$$
Employing the above estimates in (\ref{4.77}) implies (\ref{4.76}). This
concludes the proof.
\end{proof}

\noindent
Let
\begin{equation}
\eqal{
\varphi_0(t)&=\bigg\|\sqrt{\varrho_s}u,\sqrt{\varrho_s}u_x,\sqrt{\varrho_s}
u_{xx},\sqrt{\varrho}u_t,\sqrt{\varrho}u_{xt},{1\over\sqrt{p_s}}q,
{1\over\sqrt{p_s}}q_t,{1\over\sqrt{p_s}}q_{xt},\cr
&\quad{1\over\sqrt{p_s}}q_x,{1\over\sqrt{p_s}}q_{xx}\bigg\|_{L_2}^2.\cr}
\label{4.81}
\end{equation}
Since $\varrho_*\le\varrho_s\le\varrho^*$,
$A\varrho_*^\varkappa\le p_s\le A(\varrho^*)^\varkappa$ the local solution
satisfies also that
$$
{1\over2}\varrho_*\le\varrho\le2\varrho^*.
$$
Hence, there exist constants $c_1$, $c_2$, $c_1<c_2$ such that
\begin{equation}
c_1\varphi_1\le\varphi_0\le c_2\varphi_1.
\label{4.82}
\end{equation}
Then the above lemmas imply

\begin{lemma}\label{l4.7}
For sufficiently regular solutions we have
\begin{equation}
\eqal{
&{d\over dt}\varphi_0+\Phi_1\le c(|f_s|_{1,0}^2+A_4(1+A_1)+A_1+A_1^2+A_1^3+
A_1^4)\varphi_0\cr
&\quad+c(A_1\varphi_0+(1+\varphi_0)\varphi_0+\varphi_0^4)\varphi_0+c
(1+A_4)\varphi_1\Phi_1+cA_1|g|_{1,0}^2\cr
&\equiv c_0B_1\varphi_0+c_0B_2(\varphi_0+\varphi_0^2+\varphi_0^4)\varphi_0+
c_0B_3\varphi_1\Phi_1+c_0G.\cr}
\label{4.83}
\end{equation}
\end{lemma}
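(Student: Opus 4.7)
The plan is to combine the differential inequalities already established in Lemmas \ref{l4.5} and \ref{l4.6}, which together account for every component norm entering the definition (\ref{4.81}) of $\varphi_0$. Lemma \ref{l4.5} delivers (\ref{4.73}), whose left-hand side contains all weighted $L_2$ norms appearing in $\varphi_0$ \emph{except} $\|q_{xt}/\sqrt{p_s}\|_0^2$; Lemma \ref{l4.6} supplies exactly the missing piece (\ref{4.76}). First I would simply add (\ref{4.73}) and (\ref{4.76}). This places $\frac{d}{dt}\varphi_0$ on the left together with the good term $|u|_{3,2}^2+|q|_{2,1}^2=\Phi_1$, while introducing on the right the bad term $c/\varepsilon\|u_{xxt}\|_0^2$ from (\ref{4.76}); choosing $\varepsilon$ small allows this bad term to be absorbed by the $\|u_t\|_2^2$ contribution sitting inside $\Phi_1$ on the left (this is how Lemma \ref{l4.3} already prepared the ground).

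Next I would unpack the right-hand sides $X_0^2$, $X_3^2$, $X_4^2$ from (\ref{4.1}), (\ref{4.46}), (\ref{4.53}) respectively, together with the explicit remainder in (\ref{4.76}), and group the resulting terms. Using the equivalence (\ref{4.82}) $c_1\varphi_1\le\varphi_0\le c_2\varphi_1$ I would replace each occurrence of $\varphi_1$ by $\varphi_0$ up to harmless multiplicative constants. The term $\|\eta\|_2^2\|v_{stt}\|_0^2$ present in $X_3^2$ is handled by estimate (\ref{2.4}) (which gives $\|\eta\|_2\le c\|q\|_2\le c\varphi_1^{1/2}$) combined with the definition $A_4=A_2+A_3+\|v_{stt}\|_0^2$ from (\ref{2.15}), so it contributes a term of the form $cA_4\varphi_0$ without requiring an independent control on $\|v_{stt}\|_0^2$. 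Similarly, the summands involving $A_2$, $A_3$ that appear in $X_4^2$ are absorbed into $A_4$ using $A_4\ge A_2+A_3$.

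After this bookkeeping, I would sort the right-hand side by its dependence on $\varphi_0$: linear-in-$\varphi_0$ terms receive the coefficient $|f_s|_{1,0}^2+A_4(1+A_1)+A_1+A_1^2+A_1^3+A_1^4$, the genuinely nonlinear contributions $A_1\varphi_0^2$, $(1+\varphi_0)\varphi_0^2$, $\varphi_0^5$ assemble into $c(A_1\varphi_0+(1+\varphi_0)\varphi_0+\varphi_0^4)\varphi_0$, the coupled term reading $c(1+A_4)\varphi_1\Phi_1$ comes from the single $\Phi_1$-containing contribution in $X_4^2$ (it must remain on the right because it cannot be absorbed into the good $\Phi_1$ on the left without already knowing $\varphi_1$ is small), and finally the force term $cA_1|g|_{1,0}^2$ is kept as is. Renaming the three grouped coefficients $B_1,B_2,B_3$ and denoting the remaining force term by $G$ yields precisely (\ref{4.83}).

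The main obstacle is bookkeeping rather than analysis: one has to track the many polynomial expressions in $\varphi_1$, $A_1$, $A_2$, $A_3$ that live inside $X_0^2,X_3^2,X_4^2$ and the remainder of (\ref{4.76}), verify that every smallness parameter $\varepsilon$ introduced during the absorption of $\|u_{xxt}\|_0^2$ is compatible with the absorptions already performed inside the proofs of Lemmas \ref{l4.3}--\ref{l4.4}, and confirm that no term survives on the right that cannot be written either with a coefficient depending only on $A_1,\dots,A_4$ and $|f_s|_{1,0}$ multiplying $\varphi_0$, as a product $\varphi_1\Phi_1$ with coefficient $1+A_4$, or as a pure force $A_1|g|_{1,0}^2$. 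Once this is verified, (\ref{4.83}) follows immediately.
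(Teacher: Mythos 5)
Your plan matches what the paper intends (it offers no explicit proof, only "the above lemmas imply"): add (\ref{4.73}) from Lemma~\ref{l4.5} to (\ref{4.76}) from Lemma~\ref{l4.6}, absorb the two bad terms on the right into the good $\Phi_1$ on the left, replace $\varphi_1$ by $\varphi_0$ via (\ref{4.82}), and regroup $X_0^2,X_3^2,X_4^2$ plus the remainder of (\ref{4.76}) into the $B_1,B_2,B_3,G$ form. That is the right strategy and the bookkeeping you describe produces exactly (\ref{4.83}).

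There is, however, one genuine slip in the absorption step. In (\ref{4.76}) the two bad terms are $\varepsilon\|q_{xt}\|_0^2$ and $c/\varepsilon\|u_{xxt}\|_0^2$. You write that "choosing $\varepsilon$ small allows this bad term to be absorbed by the $\|u_t\|_2^2$ contribution". This is backwards: sending $\varepsilon\to 0$ makes the coefficient $c/\varepsilon$ of $\|u_{xxt}\|_0^2$ blow up, not shrink. With $\varepsilon$ small you can absorb $\varepsilon\|q_{xt}\|_0^2$ into $\|q_t\|_1^2$ on the left of (\ref{4.73}), but then $c/\varepsilon\|u_{xxt}\|_0^2$ can only be absorbed into $\|u_t\|_2^2$ if $c/\varepsilon<1$, i.e.\ $\varepsilon>c$ — and whether a window $c<\varepsilon<1$ exists depends on the generic constant, which the paper does not control. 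The clean fix is different: fix $\varepsilon=1$ in (\ref{4.76}), multiply the whole of (\ref{4.76}) by a small weight $\mu>0$, and then add to (\ref{4.73}). The bad terms become $\mu\|q_{xt}\|_0^2+\mu c\|u_{xxt}\|_0^2$, both absorbable for $\mu$ small; the time derivative becomes $\frac{d}{dt}\bigl(\varphi_0^{(4.73)}+\mu\|q_{xt}/\sqrt{p_s}\|_0^2\bigr)$, which is still equivalent to $\varphi_0$ (and to $\varphi_1$) in the sense of (\ref{4.82}), since it is a positive-definite combination of the same component norms with merely different weights.

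A smaller point: you invoke (\ref{2.4}) to get $\|\eta\|_2\le c\|q\|_2$. Estimate (\ref{2.4}) is stated only for $L_p$ norms; extending it to $H^2$ requires differentiating the relation (\ref{1.13}) and controlling derivatives of the intermediate quantity $\tilde q$ through (\ref{1.14}) and the regularity of $\varrho_s$, $\varrho$. This is standard and almost certainly what the authors have in mind, but (\ref{2.4}) by itself does not give the $H^2$ bound. Worth acknowledging if you want the absorption of $\|\eta\|_2^2\|v_{stt}\|_0^2$ into $cA_4\varphi_0$ to be airtight.
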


\noindent
The differential inequality (\ref{4.83}) is proved for the local solutions.
Our aim is to extend the local existence step by step in time.

\begin{lemma}\label{l4.8}
Let $B_i$, $i=1,2,3$, and $G$ be defined in (\ref{4.83}). Assume that
\begin{enumerate}
\item $\bar B_i=\sup_{k\in\N_0}\sup_{t\in[kT,(k+1)T]}B_i(t)$, $i=2,3$, $\sup_k\intop_{kT}^{(k+1)T}B_1(t)dt<\infty$.
\item $c_0\bar B_3\varphi_1\le{1\over2}$.
\item $\gamma_*$ is a~constant so small that
$$
{1\over2}-c_0\bar B_2(1+\gamma_*+\gamma_*^3)\gamma_*^2\ge{c_*\over 2},\quad 0<c_*<1.
$$
\item Let for $\gamma\le\gamma_*$,
$$
\varphi(0)\le\gamma,\quad c_0G(t)\le{c_*\over4}\gamma,\quad t\in\R_+.
$$
\end{enumerate}
Then
\begin{equation}
\varphi(t)\le\gamma\quad \textsl{for}\ \ t\in\R_+.
\label{4.84}
\end{equation}
\end{lemma}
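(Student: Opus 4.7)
The plan is a continuity (bootstrap) argument in $t$. By Theorem \ref{t3.3} the solution is continuous in $t$ with values in the relevant Sobolev spaces, and $\varphi_1(0)\le\gamma$ by hypothesis 4; hence the set $\{t\ge 0:\varphi_1(s)\le\gamma \text{ for every } s\in[0,t]\}$ is non-empty. Let $T^*$ be its supremum. I aim to show $T^*=+\infty$, arguing by contradiction under the assumption $T^*<\infty$.

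On $[0,T^*]$ the bootstrap $\varphi_1\le\gamma\le\gamma_*$ together with (\ref{4.82}) gives $\varphi_0\le c_2\gamma$. Hypothesis 2 then yields $c_0 B_3\varphi_1\le c_0\bar B_3\gamma\le 1/2$, absorbing the term $c_0 B_3\varphi_1\Phi_1$ in (\ref{4.83}) into $(1/2)\Phi_1$ on the left. For the $B_2$-nonlinearity I factor $(\varphi_0+\varphi_0^2+\varphi_0^4)\varphi_0=\varphi_0^2(1+\varphi_0+\varphi_0^3)$ and apply the bootstrap twice to bound it by $(1+\gamma_*+\gamma_*^3)\gamma_*^2$; hypothesis 3 then guarantees that the resulting contribution absorbs the fraction $(1-c_*)/2$ of the remaining $\Phi_1$, once one uses the fact that $\Phi_1$ dominates $\varphi_0$ up to lower-order terms (through Poincar\'e on the $u$-components and through equations (\ref{1.10}) and (\ref{2.2}) recast as controls for the $q$-components). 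The resulting inequality reads
$$
\frac{d}{dt}\varphi_0+\frac{c_*}{2}\Phi_1\le c_0 B_1\varphi_0+c_0 G.
$$

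Since $\Phi_1\ge c\,\varphi_0$ for some $c>0$ (from the same relations), this Gronwall-type inequality becomes
$$
\frac{d}{dt}\varphi_0+\frac{c_*\,c}{2}\varphi_0\le c_0 B_1\varphi_0+c_0 G.
$$
On each period $[kT,(k+1)T]\cap[0,T^*]$, hypothesis 1 controls $\int_{kT}^{(k+1)T} c_0 B_1\,ds$ uniformly in $k$, so the Gronwall exponential $\exp\!\bigl(\int_{kT}^{t}c_0 B_1\,ds\bigr)$ remains uniformly bounded period by period; hypothesis 4 ($c_0 G\le (c_*/4)\gamma$) keeps the inhomogeneous contribution below $\gamma$. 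An explicit Gronwall computation then propagates $\varphi_1(kT)\le\gamma$ to $\varphi_1(t)\le\gamma$ throughout the period. Iterating over $k=0,1,2,\dots$ yields $\varphi_1\le\gamma$ on $[0,T^*]$, and continuity combined with Theorem \ref{t3.3} extends the bound strictly past $T^*$, contradicting the maximality of $T^*$. Hence $T^*=+\infty$, and (\ref{4.84}) follows.

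The hardest step is the first one: carrying out the two simultaneous absorptions (of $c_0 B_3\varphi_1\Phi_1$ and of the polynomial $B_2$-nonlinearity) while preserving a strictly positive coefficient in front of $\Phi_1$ on the left-hand side. This is precisely what the quantitative smallness conditions 2 and 3 are engineered to deliver, with the residual coefficient equal to $c_*/2>0$. A secondary difficulty is that hypothesis 1 furnishes only uniform integrability of $B_1$ over consecutive periods, not a pointwise bound; since $T$ is fixed, however, the uniform bound on $\sup_k\int_{kT}^{(k+1)T}B_1\,ds$ is enough for the period-by-period Gronwall argument to close.
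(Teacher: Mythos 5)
Your overall strategy---a bootstrap/continuity argument combined with absorption of the $B_2$- and $B_3$-nonlinearities via Assumptions 2 and 3, followed by Gronwall and period-by-period iteration---is the same strategy the paper follows, and the absorption part of your argument matches the derivation of (\ref{4.85}) from (\ref{4.83}). The divergence is in the treatment of the linear term $c_0 B_1\varphi_0$, and that is where your proof has a genuine gap.

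After absorption you arrive at
$$
\frac{d}{dt}\varphi_0+\frac{c_*\,c}{2}\varphi_0\le c_0 B_1\varphi_0+c_0 G,
$$
and you then apply Gronwall, claiming that Assumption~1 (uniform boundedness of $\int_{kT}^{(k+1)T}B_1\,dt$) keeps the exponential factor under control and hence propagates the bound $\varphi_0\le\gamma$ across the period. This does not follow. Gronwall gives
$$
\varphi_0(t)\le\varphi_0(kT)\,\exp\Bigl(\int_{kT}^{t}\bigl(c_0B_1-\tfrac{c_*c}{2}\bigr)\,ds\Bigr)+\int_{kT}^{t}c_0G(s)\exp\Bigl(\int_{s}^{t}\bigl(c_0B_1-\tfrac{c_*c}{2}\bigr)\,ds'\Bigr)ds,
$$
and boundedness of $\sup_k\int_{kT}^{(k+1)T}B_1$ by itself does \emph{not} make the exponent nonpositive; if $c_0\int_{kT}^{(k+1)T}B_1\,dt>\tfrac{c_*c}{2}T$ the first term already exceeds $\gamma$ and the threshold is not preserved. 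Closing your version would require an extra quantitative smallness hypothesis linking $c_0\sup_k\int_{kT}^{(k+1)T}B_1\,dt$ to $c_*T$, which the lemma does not impose. The device the paper uses, and which your proof omits, is the exponential rescaling $\eta(t)=\exp\bigl(-c_0\int_{kT}^tB_1\,dt'\bigr)\varphi_0(t)$ and $G_0(t)=G(t)\exp\bigl(-c_0\int_{kT}^tB_1\,dt'\bigr)$ of (\ref{4.88})--(\ref{4.90}), which removes the $c_0B_1\varphi_0$ term from the differential inequality altogether (see (\ref{4.91})). The paper then argues not via a Gronwall bound at the endpoint $(k+1)T$ but by contradiction at the first escape time $t_*$ of $\varphi_0$ past $\gamma$ inside the period, comparing $\eta$ against the time-varying threshold $\gamma\exp\bigl(-c_0\int_{kT}^{t}B_1\bigr)$ (eqs.\ (\ref{4.92})--(\ref{4.95})). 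You should adopt this substitution; without it, the step you label ``explicit Gronwall computation'' does not close under the stated hypotheses.
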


\begin{proof}
In view of Assumption 2 and that $\varphi_0\le\Phi_1$ we obtain from
(\ref{4.83}) the inequality
\begin{equation}
{d\over dt}\varphi_0\le-\bigg({1\over2}-c_0\bar B_2(1+\varphi_0+\varphi_0^3)
\varphi_0\bigg)\varphi_0+c_0G+c_0B_1\varphi_0.
\label{4.85}
\end{equation}
To prove the lemma we examine inequality (\ref{4.85}) step by step in time.
Therefore, we examine it in the time interval $[kT,(k+1)T]$, $T>0$ is given
and $k\in\N_0$. Then we assume
\begin{equation}
\varphi_0(kT)\le\gamma,\quad c_0G(t)\le{c_*\over4}\gamma\quad t\in[kT,(k+1)T].
\label{4.86}
\end{equation}
Our aim is to show that
\begin{equation}
\varphi_0((k+1)T)\le\gamma.
\label{4.87}
\end{equation}
Let us introduce the quantity
\begin{equation}
\eta(t)=\exp\bigg(-c\intop_{kT}^tB_1(t')dt'\bigg)\varphi_0(t)
\label{4.88}
\end{equation}
At $t=kT$ we have
\begin{equation}
\eta(kT)=\varphi_0(kT)\le\gamma.
\label{4.89}
\end{equation}
Introducing the quantity
\begin{equation}
G_0(t)=G(t)\exp\bigg(-c_0\intop_{kT}^tB_1(t')dt'\bigg)
\label{4.90}
\end{equation}
we express (\ref{4.85}) in the form
\begin{equation}
{d\over dt}\eta\le-(1/2-c_0\bar B_2(1+\varphi_0+\varphi_0^3)\varphi_0)\eta+c_0G_0
\label{4.91}
\end{equation}
Suppose that
$$\eqal{
t_*&=\inf\{t\in(kT,(k+1)T]:\ \varphi_0(t)>\gamma\}\cr
&=\inf\bigg\{t\in(kT,(k+1)T]:\ \eta(t)>\gamma\exp\bigg(-c_0\intop_{kT}^tB_1(t')
dt'\bigg)\bigg\}>kT.\cr}
$$
By Assumption 3 inequality (\ref{4.91}) takes the form
\begin{equation}
{d\over dt}\eta\le-{c_*\over2}\eta+G_0.
\label{4.92}
\end{equation}
Clearly, we have
\begin{equation}
\eta(t_*)=\gamma\exp\bigg(-c_0\intop_{kT}^{t_*}B_1(t)dt\bigg)
\label{4.93}
\end{equation}
and
\begin{equation}
\eta(t)>\gamma\exp\bigg(-c_0\intop_{kT}^tB_1(t')dt'\bigg)\quad {\rm for}\ \
t>t_*.
\label{4.94}
\end{equation}
By Assumption 4 we obtain from (\ref{4.92}) the relation
\begin{equation}
{d\over dt}\eta|_{t=t_*}\le c_*\bigg(-{\gamma\over2}+{\gamma\over4}\bigg)\exp
\bigg(-c_0\intop_{kT}^{t_*}B_1(t)dt\bigg)<0,
\label{4.95}
\end{equation}
so it contradicts to (\ref{4.93}) and (\ref{4.94}). Hence
$$
\eta(t)<\gamma\exp\bigg(-\intop_{kT}^tB_1(t')dt'\bigg)
$$
and
$$
\varphi_0(t)<\gamma.
$$
Hence (\ref{4.85}) holds. By the induction we prove the lemma for all
$t\in\R_+$. This concludes the proof.
\end{proof}

\noindent
Let us denote
$$\eqal{
X_s(kT,(k+1)T)&=\|\varrho_s,v_s\|_{L_\infty(kT,(k+1)T;\Gamma_2^3(\Omega))}^2\cr
&\quad+\|v_{x,tt}\|_{L_\infty(kT,(k+1)T;L_2(\Omega))}^2.\cr}
$$

\begin{remark}\label{r4.8}
Let the asusmptions of Lemma \ref{l4.8} hold. Integrating (\ref{4.83}) with
respect to time from $kT$ to $(k+1)T$ yields
$$\eqal{
&\sup_k\|u,q\|_{{\frak M}(\Omega\times(kT,(k+1)T))}\le[\varphi(\sup_kX_s
(kT,(k+1)T))\gamma\cr
&\quad+\sup_k\|f_s\|_{L_\infty(kT,(k+1)T;\Gamma_0^1(\Omega))}^2\gamma+\sup_k
\|g\|_{L_\infty(kT,(k+1)T;\Gamma_0^1(\Omega))}]\cr
&\le D\gamma,\cr}
$$
where $D$ is a~constant.
\end{remark}

\bibliographystyle{amsplain}

\end{document}